\newtheorem{theorem}{Theorem}
\newtheorem*{theorem*}{Theorem}
\newtheorem{lem}[theorem]{Lemma}
\newtheorem{proposition}[theorem]{Proposition}
\newtheorem{corollary}[theorem]{Corollary}
\def\N{\mathbb{N}}
\def\R{\mathbb{R}}
\def\dd{\mathrm{d}}
\def\ee{\mathrm{e}}
\begin{document}
\title{The speed of sequential asymptotic learning} \author{Wade
  Hann-Caruthers, Vadim V. Martynov and Omer Tamuz}

\address{California Institute of Technology}

\thanks{The authors would like to thank Christophe Chamley, Gil
  Refael, Peter S{\o}rensen, Philipp Strack, Ye Wang, Ivo Welch and Leeat Yariv for helpful comments
  and discussions. This
    work was supported by  a grant from the Simons Foundation (\#419427,  Omer Tamuz)}

\maketitle

\begin{abstract}
  In the classical herding literature, agents receive a private signal
  regarding a binary state of nature, and sequentially choose an
  action, after observing the actions of their predecessors. When the
  informativeness of private signals is unbounded, it is known that
  agents converge to the correct action and correct belief. We study
  how quickly convergence occurs, and show that it happens more slowly
  than it does when agents observe signals. However, we also show that
  the speed of learning from actions can be arbitrarily close to the
  speed of learning from signals. In particular, the expected time until the agents stop taking the wrong action can be either finite or infinite, depending on the private signal distribution.  In the canonical case of Gaussian
  private signals we calculate the speed of convergence precisely, and
  show explicitly that, in this case, learning from actions is
  significantly slower than learning from signals.
\end{abstract}

\section{Introduction}

When making decisions, we often rely on the decisions that others
before us have made. Sequential learning models have been used to
understand different phenomena that occur when many individuals make
decisions based on the observed actions of others. These include herd
behavior (cf.~\cite{banerjee1992simple}), where many agents make the
same choice, as well as informational cascades
(e.g.~\cite{bikhchandani1992theory}), where the actions of the first
few agents provide such compelling evidence that later agents no
longer have incentive to consider their own private information.

Such results on how information aggregation can fail are complemented
by results which demonstrate that when private signals are arbitrarily
strong, learning is robust to this kind of
collapse~\cite{smith2000pathological}. In particular, in a process
called asymptotic learning (see, e.g.,~\cite{acemoglu2011bayesian}),
agents will eventually choose the correct action and their beliefs
will converge to the truth. A question that has not been answered in
the literature is: how quickly does this happen? And how does the
speed of learning compare to a setting in which agents observe signals
rather than actions?

We consider the classical setting of a binary state of nature and
binary actions, where each of the two actions is optimal at one of the
states. The agents receive private signals that are
independent conditioned on the state. These signals are unbounded, in
the sense that an agent's posterior belief regarding the state can be
arbitrarily close to both 0 and 1. The agents are exogenously ordered,
and, at each time period, a single agent takes an action, after
observing the actions of her predecessors.

We measure the speed of learning by studying how the public belief
evolves as more and more agents act. Consider an outside observer who
observes the actions of the sequence of agents. The public belief is
the posterior belief that such an outside observer assigns to the
correct state of nature. It provides a measure of how well the
population has learned the state. Since signals are unbounded, the
public belief tends to $1$ over time~\cite{smith2000pathological}; equivalently, the corresponding
log-likelihood ratio tends to infinity. As the outside observer may also be interested in learning the state, it is natural to ask how quickly she converges to the correct belief, and, in particular, to understand her asymptotic speed of learning when observing actions. Asymptotic rates of convergence are an important tool in the study of inference processes in statistical theory, and have also been studied in social learning models in the Economics literature (e.g.,~\cite{vives1993fast, duffie2007information, duffie2009information}).

When agents observe the {\em signals} (rather than actions) of all of
their predecessors, this log-likelihood ratio is asymptotically
linear. Thus, it cannot grow faster than linearly when the agents
observe actions. Our first main finding is that when observing
actions, the log-likelihood ratio always grows
sub-linearly. Equivalently, the public belief converges
sub-exponentially to 1. Our second main finding is that, depending on
the choice of private signal distributions, the log-likelihood ratio
can grow at a rate that is arbitrarily close to linear.

We next analyze the specific canonical case of Gaussian private
signals. Here we calculate precisely the asymptotic behavior of the
log-likelihood ratio of the public belief. We show that learning from
actions is significantly slower than learning from signals: the
log-likelihood ratio behaves asymptotically as $\sqrt{\log t}$. To
calculate this we develop a technique that allows, much more
generally, for the long-term evolution of the public belief to be
calculated for a large class of signal distributions.

Since, in our setting of unbounded signals, agents eventually take the
correct action, an additional, natural measure of the speed of
learning is the expected time at which this happens: how long does it
take until no more mistakes are made? We call this the {\em time to
  learn}.

We show that the expected time to learn depends crucially on the
signal distributions. For distributions, such as the Gaussian, in
which strong signals occur with very small probability, we show that
the expected time to learn is infinite.\footnote{In the benchmark case
  of observed signals this time is finite, for any signal
  distribution.}  However, when strong signals are less rare, this
expectation is finite.\footnote{This result disproves a conjecture of
  S{\o}rensen~\cite[page 36]{sorensen1996rational}.} Intuitively, when
strong signals are rare, agents are more likely to emulate their
predecessors, and so it may take a long time for a mistake to be
corrected.

Finally, in the Gaussian case, we study another measure of the speed
of learning.  Namely, we consider directly how the probability of
choosing the incorrect action varies as agents see more and more of
the other agents' decisions before making their own. We find that this
probability is asymptotically no less than $1/t^{1+\varepsilon}$ for
any $\varepsilon>0$. In contrast, when agents can observe the private
signals of their predecessors, the probability of mistake decays
exponentially, and so also in this sense learning from signals is much
faster than learning from actions.

\subsection{Related literature}

Several previous studies have considered the same
question. Chamley~\cite{chamley2004rational} gives an estimate for the
evolution of the public belief for a class of private signal
distributions with fat tails. He also studies the speed of convergence
in the Gaussian case using a computer
simulation. S{\o}rensen~\cite[Lemma 1.9]{sorensen1996rational} has
published a claim related to our Theorem~\ref{thm:sublinear}, with an
unfinished proof. Also in~\cite{sorensen1996rational}, S{\o}rensen
shows that the expected time to learn is infinite for some signal
distributions, and conjectures that it is always infinite, which we
show to not be true. In~\cite{smith96pathological}, an early version of~\cite{smith2000pathological}, the question of the time to learn is also addressed, and an example is given in which the time to learn is infinite, but is finite conditioned on one of the states. A concurrent paper by Rosenberg and
Vieille~\cite{rosenberg2017efficiency} studies related questions. In
particular they study the time until the first correct action, as well
as the number of incorrect actions---which are related to our time to
learn---and characterize when they have finite expectations.

A related model is studied by Lobel, Acemoglu, Dahleh and
Ozdaglar~\cite{lobel2009rate}, who consider agents who also act
sequentially, but do not observe all of their predecessors'
actions. They study how the speed of learning varies with the network
structure. Vives~\cite{vives1993fast}, in a paper with a very similar
spirit to ours, studies the speed of sequential learning in a model
with actions chosen from a continuum, and where agents observe a noisy
signal about their predecessors' actions. He similarly shows that learning
is significantly slower than in the benchmark case. An overview
of this literature is given by Vives in his book~\cite[Chapter
6]{vives2010information}.

\section{Model}
Let $\theta \in \{-1, +1\}$ be the true state of the world, with each
state a priori equally likely\footnote{We make this simplification of
  a (1/2,1/2) prior to reduce the complexity of the presentation, but
  all results hold for general priors.}. Each rational agent
$t \in \{1,2,\ldots\}$ receives a private signal $s_t$. The signals
are i.i.d.\ conditioned on $\theta$: if $\theta=+1$ they have
cumulative distribution function (CDF) $F_+$ and if $\theta=-1$ they
have CDF $F_-$.\footnote{One could consider signals that take values
  in a general measurable space (rather than $\R$), but the choice of
  $\R$ is in fact without loss of generality, since all standard
  measurable spaces are isomorphic.} We assume that $F_+$ and $F_-$
are absolutely continuous with respect to each other, so that private
signals never completely reveal the state.

Let
$$L_t = \log{\frac{\mathbb{P}(\theta = +1|s_t )}{\mathbb{P}(
    \theta = -1 | s_t)}}$$
be the log-likelihood ratio of the belief induced by the agent's
private signal. We assume that private signals are unbounded, in the sense that $L_t$ is unbounded: for every $M \in \R$ the probability that $L_t > M$ is positive, as is the probability that $L_t< -M$. We denote by $G_+$ and $G_-$ the conditional CDFs of $L_t$.

The agents act sequentially, with agent $t$ acting after observing
the actions of agents $\{1,\ldots,t-1\}$. The utility of the action
$a_t \in \{-1,+1\}$ is 1 if $a_t=\theta$ and 0 otherwise.

Denote the public belief by
\begin{align*}
  \mu_t = \mathbb{P}(\theta=+1 | a_1, \dots, a_{t-1}).
\end{align*}
This is the posterior held by an outside observer after recording the
actions of the first $t-1$ agents.  We denote by $\ell_t$ the log-likelihood ratio of the public belief:
\begin{align*}
  \ell_t = \log{\frac{\mu_t}{1 - \mu_t}}\cdot
\end{align*}
In equilibrium, agent $t$ 
chooses $a_t=+1$ iff\footnote{For simplicity, we assume that agents choose action $-1$ when indifferent. This will have no impact on our results.}
\begin{align*}
  \log{\frac{\mathbb{P}(\theta = +1 | a_1, \dots, a_{t-1}, s_t)}{\mathbb{P}(\theta = -1 | a_1, \dots, a_{t-1}, s_t)}} > 0.
\end{align*}
A simple calculation shows that this occurs iff
$$\ell_t + L_t > 0.$$

Now, another straightforward calculation shows that when $a_t=+1$,
\begin{align}
  \label{eq:ell_t}
  \ell_{t+1} = \ell_t + D_+(\ell_t),
\end{align}
where
\begin{align*}
    D_+(x) = \log{\frac{1-G_{+}(-x)}{1-G_{-}(-x)}}\cdot
\end{align*}
Likewise, when $a_t=-1$,   
$$
\ell_{t+1} = \ell_t + D_-(\ell_t),
$$
where
\begin{align*}
    D_-(x) = \log{\frac{G_{+}(-x)}{G_{-}(-x)}}\cdot
\end{align*}
We can interpret $D_+(\ell_t)$ and $D_-(\ell_t)$ as the contributions of agent $t$'s action to the public belief.

\section{The evolution of public belief}
Consider a baseline model, in which each agent observes the private signals of all of her predecessors. In this case the public log-likelihood ratio $\tilde{\ell}_t$ would equal the sum 
\begin{align*}
    \tilde{\ell}_t = \sum_{\tau=1}^t L_\tau.
\end{align*}
Conditioned on the state this is the sum of i.i.d.\ random variables,
and so by the law of large numbers we have that the limit
$\lim_t\tilde{\ell}_t/t$ would---conditioned on (say)
$\theta=+1$---equal the conditional expectation of $L_t$, which is
positive.\footnote{In fact, $\mathbb{E}(L_t|\theta=+1)$ is equal to
  the Kullback-Leibler divergence between $F_+$ and $F_-$, which is
  positive as long as the two distributions are different.}

\subsection{Sub-linear public beliefs}

Our first main result shows that when agents observe actions rather than signals, the public log-likelihood ratio grows sub-linearly, and so learning from actions is always slower than learning from signals.
\begin{theorem}
\label{thm:sublinear}
It holds with probability 1 that $\lim_t \ell_t/t=0$.
\end{theorem}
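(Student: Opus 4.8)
I would build the proof on the already-cited fact of Smith and S{\o}rensen~\cite{smith2000pathological} that learning is complete, i.e.\ that $\ell_t\to+\infty$ almost surely conditioned on $\theta=+1$ (and $\ell_t\to-\infty$ conditioned on $\theta=-1$). Relabelling the states reduces everything to the event $\{\theta=+1\}$, which I assume from now on. Since $\ell_t\to+\infty$ on this event, we automatically have $\liminf_t \ell_t/t\ge 0$, so the whole content of the theorem is the matching bound $\limsup_t \ell_t/t\le 0$; and since $\ell_t=\ell_1+\sum_{\tau<t}D_{a_\tau}(\ell_\tau)$, this will follow once we control the one-step increments $D_{a_t}(\ell_t)=\ell_{t+1}-\ell_t$ when $\ell_t$ is large.

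So first I would record two elementary facts about $D_+$ and $D_-$ for large positive argument. Using the standard identity $\dd G_+(z)=\ee^{z}\,\dd G_-(z)$ — valid because $L_t$ is itself the log-likelihood ratio of the signal, so the density of its law under $\theta=+1$ against its law under $\theta=-1$ is $\ee^{z}$ — together with $\ee^{z}\le 1$ for $z\le 0$, one gets $G_+(y)\le G_-(y)$ for all $y\le 0$. Substituting $y=-x$ with $x\ge 0$ into the definitions gives $D_+(x)\ge 0$ and, more importantly, $D_-(x)\le 0$. Second, since $G_\pm$ are CDFs we have $G_\pm(-x)\to 0$ as $x\to+\infty$, hence
\begin{align*}
  D_+(x)=\log\frac{1-G_+(-x)}{1-G_-(-x)}\longrightarrow 0\qquad\text{as }x\to+\infty.
\end{align*}
(Unboundedness of $L_t$ keeps both denominators, and hence $D_\pm$, finite everywhere, so there are no $\pm\infty$ increments to worry about.)

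Given these, the argument is short. Fix $\varepsilon>0$ and choose $M\ge 0$ with $D_+(x)<\varepsilon$ for all $x\ge M$. On the probability-one event $\{\ell_t\to+\infty\}$ there is a finite random time $T$ with $\ell_t\ge M$ for every $t\ge T$. For $T\le\tau<t$ the increment $D_{a_\tau}(\ell_\tau)$ is then at most $\varepsilon$: if $a_\tau=+1$ because $\ell_\tau\ge M$, and if $a_\tau=-1$ because $D_-(\ell_\tau)\le 0<\varepsilon$. Summing the one-step updates $\ell_{\tau+1}=\ell_\tau+D_{a_\tau}(\ell_\tau)$ from $T$ to $t-1$ gives $\ell_t\le\ell_T+\varepsilon(t-T)$, hence $\limsup_t\ell_t/t\le\varepsilon$; letting $\varepsilon\downarrow 0$ finishes the proof (the case $\theta=-1$ being symmetric).

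I do not expect a genuine obstacle inside this argument: the one point to watch is that the rare ``wrong'' steps ($a_\tau=-1$ with $\ell_\tau$ large) can push $\ell$ down sharply, but this only helps the desired upper bound and does not disturb the eventual domination $\ell_t\ge M$, which is a property of the whole trajectory inherited from $\ell_t\to\infty$ rather than something one re-derives step by step. Effectively the entire difficulty is outsourced to the completeness-of-learning result of~\cite{smith2000pathological}; if one wanted a fully self-contained proof, re-establishing $\ell_t\to\infty$ (which is where the unboundedness of signals is really used) would be the hard part.
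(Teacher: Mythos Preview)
Your proof is correct and follows essentially the same line as the paper's: condition on $\theta=+1$, use Smith--S{\o}rensen to get $\ell_t\to\infty$, and then observe that the one-step increments are eventually at most $\varepsilon$ because $D_+(x)\to 0$. The only cosmetic difference is that the paper first invokes the additional consequence that $a_t=+1$ for all large $t$ (so only $D_+$ ever appears), whereas you work from $\ell_t\ge M$ alone and dispose of any residual $a_\tau=-1$ steps via $D_-(x)\le 0$; both routes are valid and equally short.
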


Our second main result shows that, depending on the choice of private signal distributions, $\ell_t$ can grow at a rate that is arbitrarily close to linear: given any sub-linear function $r_t$, it is possible to find private signal distributions so that $\ell_t$ grows as fast as $r_t$.
\begin{theorem}
\label{thm:fast-sublinear}
For any $r \colon \N \to \R_{>0}$ such that $\lim_t r_t/t = 0$ there
exists a choice of CDFs $F_-$ and $F_+$ such that
\begin{align*}
  \liminf_{t \to \infty}\frac{|\ell_t|}{r_t} > 0
\end{align*}
with probability 1.
\end{theorem}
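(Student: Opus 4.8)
\medskip
\noindent\textbf{Proof proposal.}
The idea is to reverse‑engineer a private signal distribution whose induced ``upward'' increment $D_+$ decays just slowly enough that $\ell_t$ is driven up at rate $r_t$. The plan is to work conditionally on $\theta=+1$, the case $\theta=-1$ following by choosing $F_+,F_-$ symmetric (i.e.\ $G_-(x)=1-G_+(-x)$), so that, conditioned on $\theta=-1$, the process $-\ell_t$ has the same law as $\ell_t$ conditioned on $\theta=+1$. I would begin with some harmless reductions on $r$: if $r$ is bounded the claim is immediate because $|\ell_t|\to\infty$ almost surely~\cite{smith2000pathological}, so assume $r_t\to\infty$; and since replacing $r$ by a pointwise‑larger function only strengthens the conclusion, and the least concave majorant of a sub‑linear function is again sub‑linear (it lies below an affine function of slope $\varepsilon$ for every $\varepsilon>0$), I may assume $r$ is non‑decreasing, strictly concave and $C^1$, with $r(1)=1$, $\dot r(t)\downarrow0$, and hence $r(t)-r(t-c)\le c\,\dot r(t-c)\to0$ for each fixed $c$.

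For the construction, recall that $L_t$ is by definition a log‑likelihood ratio, which is equivalent to the tilting identity $\dd G_+(x)=\ee^{x}\,\dd G_-(x)$; conversely, \emph{any} CDF $G_-$ on $\R$ with an everywhere‑positive density, unbounded support, and $\int\ee^x\,\dd G_-(x)=1$ is realized by a genuine signal distribution --- take the signal to be $L_t$ itself. I would choose $G_-$ with prescribed left tail
\begin{align*}
  G_-(-y)=C_0\,\dot r\big(r^{-1}(y)\big)\qquad (y\ge y_0),
\end{align*}
where $C_0$ is a large constant and $y_0$ is large enough that the right‑hand side is at most $1$; this is a legitimate (continuous, strictly decreasing, vanishing) tail, and the remaining freedom in $G_-$ is spent meeting the normalization $\int\ee^x\,\dd G_-=1$ and the symmetry $G_-(x)=1-G_+(-x)$, which one checks can be arranged. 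Because $G_+(-y)=\int_{-\infty}^{-y}\ee^x\,\dd G_-(x)\le\ee^{-y}G_-(-y)$, we get $D_+(y)=(1+o(1))\,G_-(-y)=(1+o(1))\,C_0\,\dot r(r^{-1}(y))$ as $y\to\infty$, whereas $D_-(y)=\log\frac{G_+(-y)}{G_-(-y)}\le-y$ for $y\ge0$, so a single wrong action taken at a state $\ell_t\ge0$ resets $\ell$ to a value $\le0$.

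It remains to analyze the dynamics. Since $D_-(y)\le-y$ for $y\ge0$ and $\ell_t\to+\infty$ almost surely~\cite{smith2000pathological}, infinitely many wrong actions would force $\ell_t\le0$ infinitely often --- a contradiction --- so almost surely there is a random time $T$ beyond which no wrong action occurs and $\ell_t\ge y_0$; for $t\ge T$ the process obeys the deterministic recursion $\ell_{t+1}=\ell_t+D_+(\ell_t)$, which is strictly increasing with $D_+(\ell_t)\ge\tfrac12C_0\,\dot r(r^{-1}(\ell_t))>0$ (for $C_0$ large). Comparing this recursion with the solution of $\dot L=\tfrac12C_0\,\dot r(r^{-1}(L))$, which up to rescaling and shifting time is $t\mapsto r(\tfrac12C_0\,t+b)$, and carrying out the routine passage from the differential inequality to the difference inequality (which uses that $D_+$ is eventually decreasing), one gets $\ell_t\ge r\big(\tfrac12C_0\,(t-T)\big)$ for $t\ge T$. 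Taking $C_0\ge4$, we have $\tfrac12C_0(t-T)\ge t$ for $t\ge2T$, hence $\ell_t\ge r(t)$ for all $t\ge2T$, and therefore $\liminf_t\ell_t/r_t\ge1>0$ almost surely.

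Of these steps, the finiteness of the number of mistakes and the final estimate are short. The step I expect to be the main obstacle is the construction itself: checking that the prescribed left tail $G_-(-y)=C_0\dot r(r^{-1}(y))$ genuinely completes to a signal distribution meeting all of the constraints (positive density, unbounded support on both sides, the tilting normalization, and symmetry), and then that the resulting $D_+$ --- which a priori is only an implicit function of $G_-$ --- has the clean asymptotics claimed and yields, after the discretization is controlled, a lower trajectory matching $r_t$.
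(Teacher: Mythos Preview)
Your proposal is correct and follows the same overall strategy as the paper: build a symmetric signal distribution (so that $D_+(x)=-D_-(-x)$ and the two states are mirror images), prescribe the left tail of $G_-$ to decay as slowly as needed, invoke $D_+(x)\sim G_-(-x)$ (the paper's Lemma~\ref{lem:D}), reduce via asymptotic learning to the deterministic recursion $\ell_{t+1}=\ell_t+D_+(\ell_t)$, and compare that recursion to $r_t$. The one place where the paper's execution differs usefully is exactly the step you flag as the main obstacle. Rather than prescribing a continuous tail $G_-(-y)=C_0\,\dot r(r^{-1}(y))$ and then having to argue that the remaining constraints (positive density, the tilting normalization $\int\ee^x\,\dd G_-=1$, and the symmetry $G_-(x)=1-G_+(-x)$) can all be simultaneously met on the middle of the line, the paper simply takes integer-valued signals: given any monotone $Q\to0$ it sets $\nu(-n)=Q(n-1)-Q(n)$ and $\nu(n)=\ee^{-n}\nu(-n)$ for $n\ge0$, and defines $\mathbb{P}(s_t=n\mid\theta=-1)=C^{-1}\nu(n)$ and $\mathbb{P}(s_t=n\mid\theta=+1)=C^{-1}\nu(n)\ee^n$. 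With this choice the tilting identity and the symmetry hold by inspection, and one reads off $G_-(-x)=C^{-1}Q(\lceil x\rceil-1)$ directly, so there is nothing left to fill in. Conversely, you are more explicit than the paper about the recursion-to-rate passage (via the substitution $u=r^{-1}(\ell)$ and the ODE comparison), which the paper leaves as ``elementary considerations.''
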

For example, for some choice of private signal distributions, $\ell_t$ grows asymptotically at least as fast as $t / \log t$, which is sub-linear but (perhaps) close to linear.

\subsection{Long-term behavior of public beliefs}
We next turn to estimating more precisely the long-term behavior of
the public log-likelihood ratio $\ell_t$. Since signals are unbounded, agents learn the state, so
that $\ell_t$ tends to $+\infty$ if $\theta=+1$, and to $-\infty$ if
$\theta=-1$. In particular $\ell_t$ stops changing sign from some $t$
on, with probability 1; all later agents choose the correct action.

We consider without loss of generality the case that $\theta=+1$, so
that $\ell_t$ is positive from some $t$ on. Thus,
recalling~\eqref{eq:ell_t}, we have that from some $t$ on,
\begin{align*}
  \ell_{t+1} = \ell_t + D_+(\ell_t).
\end{align*}
This is the recurrence relation that we need to solve in order to
understand the long term evolution of $\ell_t$. To this end, we
consider the corresponding differential equation:
$$
\frac {\dd f}{\dd t}(t)=D_+(f(t)).
$$
Recall that $G_-$ is the CDF of the private log-likelihood ratio $L_t$, conditioned on $\theta=-1$. We show (Lemma~\ref{lem:D}) that $D_+(x)$ is well approximated by
$G_-(-x)$ for high $x$, in the sense that
\begin{align*}
  \lim_{x \to \infty}\frac{D_+(x)}{G_-(-x)} = 1.
\end{align*}
In some applications (including the Gaussian one, which we consider
below), the expression for $G_-$ is simpler than that for $D_+$, and
so one can instead consider the differential equation
\begin{align}
  \label{eq:diff}
  \frac {\dd f}{\dd t}(t)=G_-(-f(t)).
\end{align}
This equation can be solved analytically in many cases in which $G_-$
has a simple form. For example, if $G_-(-x) = \ee^{-x}$ then
$f(t) = \log(t+c)$, and if $G_-(-x) = x^{-k}$ then
$f(t) = ((k+1) \cdot t + c)^{1/(k+1)}$.

We show that solutions to this equation have the same long term
behavior as $\ell_t$, given that $G_-$ satisfies some regularity conditions.
\begin{theorem}
  \label{thm:diff}
  Suppose that $G_-$ and $G_+$ are continuous, and that the left tail
  of $G_-$ is convex and differentiable. Suppose also that
  $f \colon \mathbb{R}_{>0} \rightarrow \mathbb{R}_{>0}$ satisfies
  \begin{align}
  \label{eq:diff equation}
    \frac{\dd f}{\dd t}(t) = G_-(-f(t))
  \end{align}
  for all sufficiently large $t$. Then conditional on $\theta = +1$,
  \begin{align*}
    \lim_{t \rightarrow \infty}{\frac{\ell_t}{f(t)}} = 1
  \end{align*}
  with probability $1$.
\end{theorem}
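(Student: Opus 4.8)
The plan is to reduce the theorem to a deterministic statement about the recurrence $x_{t+1}=x_t+D_+(x_t)$ and to analyze that recurrence by comparing it with the differential equation through a change of the time variable. By the asymptotic learning results quoted above, conditional on $\theta=+1$ we have with probability one that $\ell_t\to+\infty$ and that from some (random) time on every agent plays the correct action, so that $\ell_{t+1}=\ell_t+D_+(\ell_t)$ from that time on. Hence it is enough to show: \emph{if $(x_t)$ satisfies $x_t\to\infty$ and $x_{t+1}=x_t+D_+(x_t)$ for all sufficiently large $t$, then $x_t/f(t)\to1$.} Put $g(u):=G_-(-u)$; unboundedness of the signals gives $g>0$ everywhere, the CDF property makes $g$ non-increasing, and the hypothesis makes $g$ convex and differentiable on some half-line $[x_0,\infty)$ with $g\to0$ (we may take $x_0>0$). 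By Lemma~\ref{lem:D}, $D_+(x_t)\sim g(x_t)$, so the increments $x_{t+1}-x_t$ tend to $0$ and, since $g>0$, $x_t$ is eventually strictly increasing.

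The key device is the ``clock'' $H(x):=\int_{x_0}^x\dd u/g(u)$, which is continuous, strictly increasing, convex (as $1/g$ is non-decreasing) and tends to $+\infty$. Along the ODE, the chain rule gives $\frac{\dd}{\dd t}H(f(t))=f'(t)/g(f(t))=1$ for all large $t$ (note $f\to\infty$, since $f'=g(f)>0$ cannot vanish), whence $H(f(t))=t+c$ for a constant $c$; in particular every solution of \eqref{eq:diff equation} is, for large $t$, of the form $H^{-1}(t+c)$, so the conclusion cannot depend on which solution $f$ we take. On the discrete side I would prove $H(x_{t+1})-H(x_t)\to1$ and then deduce $H(x_t)/t\to1$ by Ces\`aro averaging. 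The lower bound $H(x_{t+1})-H(x_t)=\int_{x_t}^{x_{t+1}}\dd u/g(u)\ge(x_{t+1}-x_t)/g(x_t)=D_+(x_t)/g(x_t)\to1$ uses only that $g$ is non-increasing. For the matching upper bound, convexity of $g$ gives the tangent-line inequality $g(u)\ge g(x_t)+g'(x_t)(u-x_t)$ on $[x_t,x_{t+1}]$, so $\int_{x_t}^{x_{t+1}}\dd u/g(u)\le D_+(x_t)\big/\bigl(g(x_t)+g'(x_t)D_+(x_t)\bigr)$; dividing numerator and denominator by $g(x_t)$ and using $D_+(x_t)/g(x_t)\to1$ together with $g'(x_t)\to0$ (which holds because $g$ is convex, non-negative and vanishes at infinity, so its non-decreasing derivative must increase to $0$, else $g$ would eventually be negative) shows this too tends to $1$.

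It remains to invert. We now have $H(x_t)=t+o(t)$ and $H(f(t))=t+c$, so $x_t=H^{-1}\!\bigl(\lambda_t(t+c)\bigr)$ with $\lambda_t:=H(x_t)/(t+c)\to1$. The point where the argument could fail is that $H(x_t)\sim H(f(t))$ does not in general imply $x_t\sim f(t)$; here it does, because $H$ is convex with $H(x_0)=0$ and $x_0>0$, so $H^{-1}$ is concave with $H^{-1}(0)=x_0>0$, and elementary concavity gives $H^{-1}(\lambda s)\ge\lambda H^{-1}(s)$ when $\lambda\le1$ and $H^{-1}(\lambda s)\le\lambda H^{-1}(s)$ when $\lambda\ge1$. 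Combined with monotonicity of $H^{-1}$, in either case $x_t/f(t)=H^{-1}(\lambda_t(t+c))/H^{-1}(t+c)$ is trapped between $1$ and $\lambda_t=1+o(1)$, so $x_t/f(t)\to1$, proving the claim. I expect the genuine difficulties to be essentially bookkeeping: carefully extracting $g'\to0$ (and, if one prefers, avoiding differentiability by working with one-sided derivatives of the convex function $g$) from the convexity hypothesis, and checking that the various ``for all sufficiently large $t$'' clauses — some of which begin at the random time after which the recurrence holds — are handled consistently. The substance of the proof is the clock $H$ and the two places where convexity is indispensable: the estimate $H(x_{t+1})-H(x_t)\to1$ and the final inversion.
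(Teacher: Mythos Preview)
Your argument is correct and takes a genuinely different route from the paper's. The paper proceeds via two modular lemmas: Lemma~\ref{lem:diff_first} shows that two recurrences $a_{t+1}=a_t+A(a_t)$ and $b_{t+1}=b_t+B(b_t)$ with $A/B\to1$ satisfy $a_t/b_t\to1$ (proved by a two-step ``trap and stay trapped'' argument), and Lemma~\ref{lem:recur_rel_soln_same_as_diff_eq_soln} shows a recurrence with driver $A$ tracks the ODE $f'=A(f)$ by rewriting the one-step ODE advance as $B(x)=f(f^{-1}(x)+1)-x$, checking $B/A\to1$ (this is where convexity enters, via $A'\to0$), and reducing to the first lemma. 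The theorem is then obtained by chaining $\ell_t\sim a_t\sim f(t)$, where $(a_t)$ is the auxiliary recurrence with driver $g=G_-(-\cdot)$.

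You bypass the auxiliary sequence entirely with the integral clock $H(x)=\int_{x_0}^x\dd u/g(u)$: along the ODE, $H(f(t))=t+c$ exactly; along the discrete orbit, Lemma~\ref{lem:D} and the tangent-line bound from convexity give $H(x_{t+1})-H(x_t)\to1$, whence $H(x_t)\sim t$ by Ces\`aro; the inversion exploits that $H^{-1}$ is concave with $H^{-1}(0)=x_0\ge0$, so $H^{-1}(\lambda s)/H^{-1}(s)$ lies between $1$ and $\lambda$. This is more direct and arguably more natural---$H$ is exactly the change of time variable that linearizes the flow---and your inversion step is sharper than the paper's tracking argument. What the paper's decomposition buys is a reusable comparison lemma (Lemma~\ref{lem:diff_first}) that it in fact invokes twice; your proof fuses the two comparisons into one pass through the clock. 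Both approaches hinge on convexity in the same essential way (forcing $g'\to0$), just packaged differently.
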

The condition\footnote{By ``the left tail of $G_-$ is convex and
  differentiable'' we mean that there is some $x_0$ such that,
  restricted to $(-\infty,x_0)$, $G_-$ is convex and
  differentiable.} on $G_-$ is satisfied when the random variables
$L_t$ (i.e., the log-likelihood ratios associated with the private
signals), conditioned on $\theta=-1$, have a distribution with a
probability density function that is monotone decreasing for all $x$
less than some $x_0$. This is the case for the normal distribution,
and for practically every non-atomic distribution one may encounter in
the standard probability and statistics literatures.

\subsubsection{Gaussian signals}
In the Gaussian case, $F_+$ is Normal with mean $+1$ and variance
$\sigma^2$, and $F_-$ is Normal with mean $-1$ and the same
variance. A simple calculation shows that $G_-$ is the Gaussian
cumulative distribution function, and so we cannot solve the
differential equation~\eqref{eq:diff} analytically. However, we can
bound $G_-(x)$ from above and from below by functions of the form
$\ee^{-c \cdot x^2}/x$. For these functions the solution
to~\eqref{eq:diff} is of the form $f(t)=\sqrt{\log t}$, and so we can
use Theorem~\ref{thm:diff} to deduce the following.
\begin{theorem}
\label{l_t_asymptotics}
When private signals are Gaussian, then conditioned on
$\theta = + 1$, 
  \begin{align*}
    \lim_{t \to \infty}\frac{\ell_t}{(2\sqrt{2}/\sigma)\cdot\sqrt{\log t}} = 1
  \end{align*}
  with probability 1.
\end{theorem}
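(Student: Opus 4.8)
The plan is to apply Theorem~\ref{thm:diff}. First I would identify $G_-$ explicitly: with the uniform prior and Gaussian signals a direct computation gives $L_t = 2 s_t/\sigma^2$, so conditional on $\theta=-1$ the variable $L_t$ is normal with mean $-2/\sigma^2$ and standard deviation $2/\sigma$; hence, writing $\Phi$ for the standard normal CDF, $G_-(-x)=\Phi\!\left(\tfrac1\sigma-\tfrac{\sigma x}{2}\right)$, and $G_+$ is likewise a normal CDF. Both $G_\pm$ are continuous, and the density of $G_-$ is a Gaussian density, which is increasing on $\left(-\infty,-2/\sigma^2\right)$; so the left tail of $G_-$ is convex and differentiable, and the hypotheses of Theorem~\ref{thm:diff} are met. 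It then suffices to exhibit a positive $f$ solving $\dd f/\dd t=G_-(-f)$ for all large $t$ (such $f$ exists, is increasing, and tends to $\infty$) and to show $f(t)\sim(2\sqrt2/\sigma)\sqrt{\log t}$; Theorem~\ref{thm:diff} then gives $\ell_t/f(t)\to1$ conditional on $\theta=+1$, and the claim follows upon multiplying by $f(t)/\big((2\sqrt2/\sigma)\sqrt{\log t}\big)$.

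For the asymptotics of $f$ I would bound $G_-(-x)$ two-sidedly using the classical estimate $\left(\tfrac1z-\tfrac1{z^3}\right)\varphi(z)\le 1-\Phi(z)\le\tfrac1z\varphi(z)$ (with $\varphi$ the standard normal density) at $z=\tfrac{\sigma x}{2}-\tfrac1\sigma$, so that $z^2/2=\tfrac{\sigma^2x^2}{8}-\tfrac x2+\tfrac1{2\sigma^2}$. The linear term $-x/2$ is negative for large $x$, which means the lower bound retains the \emph{exact} exponent: $G_-(-x)\ge\tfrac{A}{x}\ee^{-\sigma^2x^2/8}$ for large $x$, with $A=1/(\sqrt{2\pi}\,\sigma)$; for the upper bound one absorbs $\ee^{x/2}$ into $\ee^{\varepsilon x^2}$ to get, for every $\varepsilon>0$, $G_-(-x)\le\tfrac{A_\varepsilon}{x}\ee^{-(\sigma^2/8-\varepsilon)x^2}$ for all large $x$. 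For a right-hand side $h(y)=(B/y)\ee^{-cy^2}$ the substitution $g=f^2$ linearizes $\dd f/\dd t=h(f)$ into $\dd g/\dd t=2B\ee^{-cg}$, which integrates explicitly and gives $f(t)\sim\sqrt{(\log t)/c}$. A scalar comparison argument---integrating the inequalities $1\le f'/h^-(f)$ and $f'/h^+(f)\le1$ and using that $y\mapsto\int^y\dd u/h^{\pm}(u)$ is strictly increasing---then sandwiches $f(t)$, for $t$ large, below by the solution with $c=\sigma^2/8$ and above by the solution with $c=\sigma^2/8-\varepsilon$. Letting $\varepsilon\to0$ squeezes $f(t)/\sqrt{\log t}\to\sqrt8/\sigma=2\sqrt2/\sigma$, as required.

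The computation of $L_t$, the verification of the regularity hypotheses, and the explicit integration of the comparison equations are routine. The step that requires care is pinning down the constant $2\sqrt2/\sigma$ rather than just $\Theta(\sqrt{\log t})$: one must observe that the lower tail bound already yields the exact exponent $\sigma^2/8$ (the correction $-x/2$ only helps), so all the slack lies in the upper bound and amounts to an arbitrarily small $\varepsilon$ that disappears in the final limit; and one must make the ODE sandwiching rigorous, since $G_-(-f)$ itself is not exactly of the form $(B/f)\ee^{-cf^2}$, only squeezed between two such functions.
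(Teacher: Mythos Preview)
Your proposal is correct and follows essentially the same route as the paper: both reduce to Theorem~\ref{thm:diff}, then sandwich $G_-(-x)$ between functions of the form $\ee^{-cx^2}/x$ and compare the ODE $f'=G_-(-f)$ with the explicitly solvable equations $f'=(\text{const}/f)\,\ee^{-cf^2}$, taking the slack parameter to zero to pin down the constant $2\sqrt2/\sigma$. The only cosmetic differences are that the paper cites L'H\^opital where you invoke the Mills ratio bounds, and the paper packages the ODE comparison as a separate lemma (Lemma~\ref{lem:ineq_of_aut_dif_eqs}, proved by exactly the integration-of-inverses argument you sketch) rather than carrying it out inline.
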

Recall, that when private signals are observed, the public
log-likelihood ratio $\ell_t$ is asymptotically {\em linear}.  Thus,
learning from actions is far slower than learning from signals in the
Gaussian case.

\subsection{The expected time to learn}
\label{subsec:expected_time_to_learn}
When private signals are unbounded then with probability 1 the agents
eventually all choose the correct action $a_t=\theta$. A natural
question is: how long does it take for that to happen? Formally, we
define the {\em time to learn}
\begin{align*}
  T_L = \min\{t \,:\, a_{\tau} = \theta \mbox{ for all } \tau \geq t\},
\end{align*}
and study its expectation. Note that in the baseline case of observed
signals $T_L$ has finite expectation, since the probability of a
mistake at time $t$ decays exponentially with $t$.

We first study the expectation of $T_L$ in the case of Gaussian
signals. To this end we define the {\em time of first mistake} by
\begin{align*}
  T_1 = \min\{t \,:\, a_t \neq \theta\}
\end{align*}
if $a_t \neq \theta$ for some $t$, and by $T_1 = 0$ otherwise. We
calculate a lower bound for the distribution of $T_1$, showing that it
decays at most as fast as $1/t$.
\begin{theorem}
  \label{probability_of_mistake}
  When private signals are Gaussian then for every $\varepsilon > 0$
  there exists a $k > 0$ such that for all $t$
  \begin{align*}
    \mathbb{P}(T_1 = t) \geq \frac{k}{t^{1+\varepsilon}}\cdot
  \end{align*}
\end{theorem}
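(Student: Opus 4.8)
The plan is to condition on $\theta=+1$ throughout (the Gaussian model is symmetric under $\theta\leftrightarrow-\theta$, so $\mathbb{P}(T_1=t)=\mathbb{P}(T_1=t\mid\theta=+1)$, and the indifference convention is irrelevant since $L_t$ has a density) and to exploit the observation that, on the event that agents $1,\dots,\tau-1$ all act correctly, the public log-likelihood ratio is the \emph{deterministic} sequence $\ell^*_\tau$ defined by $\ell^*_1=0$ and $\ell^*_{\tau+1}=\ell^*_\tau+D_+(\ell^*_\tau)$ — a sequence that is strictly increasing and diverges, since $D_+>0$ and (by Lemma~\ref{lem:D}) $D_+(x)\to 0$. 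Since $\{T_1=t\}=\{L_1>-\ell^*_1,\dots,L_{t-1}>-\ell^*_{t-1},\,L_t\le-\ell^*_t\}$ and, conditioned on $\theta=+1$, the $L_\tau$ are i.i.d.\ with CDF $G_+$, I would first record the exact factorization
\begin{align*}
  \mathbb{P}(T_1=t)=G_+(-\ell^*_t)\prod_{\tau=1}^{t-1}\bigl(1-G_+(-\ell^*_\tau)\bigr).
\end{align*}
The theorem then reduces to two claims: (i) $\sum_\tau G_+(-\ell^*_\tau)<\infty$, which forces the product to be bounded below by a positive constant $p$ uniformly in $t$; and (ii) $G_+(-\ell^*_t)\ge t^{-1-\varepsilon}$ for all sufficiently large $t$. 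The finitely many remaining small $t$ are absorbed by shrinking $k$, using that every factor above is strictly positive.

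Both (i) and (ii) hinge on a sharp estimate of $\ell^*_\tau$. I would pass to the ``energy'' variable $h(\tau)=\sigma^2(\ell^*_\tau)^2/8$, use the explicit Gaussian identities $G_\pm(-x)=\Phi(-\sigma x/2\mp 1/\sigma)$ together with Lemma~\ref{lem:D} (so $D_+(\ell^*_\tau)=(1+o(1))G_-(-\ell^*_\tau)$), and rewrite the recursion through $(\ell^*_{\tau+1})^2-(\ell^*_\tau)^2=D_+(\ell^*_\tau)\bigl(2\ell^*_\tau+D_+(\ell^*_\tau)\bigr)$. Standard Gaussian tail bounds then give, for large $\tau$,
\begin{align*}
  c_1\,e^{-h(\tau)}\ \le\ h(\tau+1)-h(\tau)\ \le\ c_2\,e^{-h(\tau)+\ell^*_\tau/2}.
\end{align*}
Feeding the lower inequality into the convexity estimate $e^{h(\tau+1)}-e^{h(\tau)}\ge e^{h(\tau)}\bigl(h(\tau+1)-h(\tau)\bigr)$ yields $e^{h(\tau)}\ge c_1\tau-O(1)$, hence $h(\tau)\ge\log\tau-O(1)$ and $\ell^*_\tau\ge\frac{2}{\sigma}\sqrt{\log\tau}$ for large $\tau$; a cruder treatment of the upper inequality gives $h(\tau)\le\log\tau+O(\sqrt{\log\tau})$, hence $\ell^*_\tau=O(\sqrt{\log\tau})$.

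Given these bounds, (ii) follows from $G_+(-\ell^*_t)=\Phi(-\sigma\ell^*_t/2-1/\sigma)$ and a Gaussian lower tail bound: the exponent $\tfrac12(\sigma\ell^*_t/2+1/\sigma)^2=h(t)+\ell^*_t/2+O(1)\le\log t+O(\sqrt{\log t})$, so $G_+(-\ell^*_t)\ge t^{-1}t^{-o(1)}\ge t^{-1-\varepsilon}$ eventually — the mean shift between $G_+$ and $G_-$ costs only a sub-polynomial factor $e^{-\Theta(\sqrt{\log t})}$. For (i), the Gaussian upper tail bound gives $G_+(-\ell^*_\tau)\le\frac{C}{\ell^*_\tau}e^{-h(\tau)-\ell^*_\tau/2}\le\frac{C'}{\tau\sqrt{\log\tau}}\,e^{-\sqrt{\log\tau}/\sigma}$, where it is essential to retain the $\ell^*_\tau/2$ term coming from the mean shift: the substitution $u=\sqrt{\log\tau}$ turns $\sum_\tau\frac{1}{\tau\sqrt{\log\tau}}e^{-\sqrt{\log\tau}/\sigma}$ into $\int 2e^{-u/\sigma}\,\dd u<\infty$, whereas $\sum_\tau\frac{1}{\tau\sqrt{\log\tau}}$ diverges. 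Combining (i) and (ii) with the factorization finishes the proof.

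I expect the main obstacle to be precisely the two-sided control of $\ell^*_\tau$ — in particular, obtaining $h(\tau)=\log\tau+O(1)$ in the \emph{lower} direction. The leading-order asymptotic $\ell^*_\tau\sim(2\sqrt2/\sigma)\sqrt{\log\tau}$ (essentially Theorem~\ref{l_t_asymptotics} applied on the positive-probability event that no mistake ever occurs) does \emph{not} suffice for (i), since an error of size $o(\log\tau)$ — or even $o(\sqrt{\log\tau})$ — in $h(\tau)$ can destroy the summability. This is why the argument must iterate the recurrence directly through the energy variable and the explicit Gaussian tails, rather than quoting Theorem~\ref{thm:diff}.
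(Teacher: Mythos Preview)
Your argument is correct, but it diverges from the paper's in how you handle part (i). You and the paper start from the same factorization $\mathbb{P}_+(T_1=t)=G_+(-\ell^*_t)\prod_{\tau<t}\bigl(1-G_+(-\ell^*_\tau)\bigr)$, and for (ii) both arguments amount to bounding $\ell^*_t$ from above by $(1+\beta)\tfrac{2\sqrt2}{\sigma}\sqrt{\log t}$ and plugging into a Gaussian lower tail estimate; the paper simply quotes Theorem~\ref{l_t_asymptotics} for this, whereas you re-derive it via the energy variable. The real difference is (i): you establish $\sum_\tau G_+(-\ell^*_\tau)<\infty$ by proving the sharp lower bound $h(\tau)\ge\log\tau-O(1)$ through the convexity trick $e^{h(\tau+1)}-e^{h(\tau)}\ge c_1$, and you are right that the mere asymptotic $h(\tau)\sim\log\tau$ from Theorem~\ref{l_t_asymptotics} would not suffice here. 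The paper, however, never touches summability at all. Instead it invokes Lemma~\ref{lem:all-correct}, a martingale argument on the bounded process $\mu_t$: since $\mathbb{P}(\mu_\tau\le 1/2\text{ for some }\tau>t\mid\mu_t=q)\le 2(1-q)$, one gets $\gamma:=\mathbb{P}_+(a_\tau=+1\text{ for all }\tau)>0$ directly, which is exactly the statement that the infinite product is positive. This martingale route is shorter, uses nothing Gaussian, and is stated in the paper for general signal distributions; your route is more hands-on and yields sharper quantitative control of $\ell^*_\tau$ than the paper ever needs. The obstacle you flag in your last paragraph is genuine for \emph{your} strategy, but the paper sidesteps it entirely.
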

Thus $T_1$ has a very thick tail, decaying far slower than the
exponential decay of the baseline case. In particular, $T_1$ has
infinite expectation, and so, since $T_L > T_1$, the expectation of
the time to learn $T_L$ is also infinite.

In contrast, we show that when private signals have thick
tails---that is, when the probability of a strong signal vanishes
slowly enough---then the time to learn has finite expectation. In
particular, we show this when the left tail of $G_-$ and the right tail of $G_+$ are polynomial.\footnote{Recall that $G_-$ is the conditional cumulative distribution function of the private log-likelihood ratios $L_t$.} 
\begin{theorem}
\label{thm:polytail-finite-expectation}
  Assume that $G_-(-x) = c \cdot x^{-k}$ and that $G_+(x) = 1-c \cdot x^{-k}$ for some $c > 0$ and $k > 0$, and for all $x$ greater than some $x_0$. Then
  $\mathbb{E}(T_L) < \infty$.
\end{theorem}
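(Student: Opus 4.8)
Here is the plan I would follow.

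The hypotheses are invariant under relabelling the two states (relabelling replaces $G_-$ by $x\mapsto 1-G_+(-x)$ and $G_+$ by $x\mapsto 1-G_-(-x)$, which again have polynomial tails of the same form), so I would reduce to bounding $\mathbb{E}(T_L\mid\theta=+1)$ and work conditionally on $\theta=+1$, where $\ell_t\to+\infty$ a.s. The plan rests on three inputs. First, the increments: from $\dd G_+(x)=\ee^{x}\,\dd G_-(x)$ together with the polynomial‑tail hypotheses (and Lemma~\ref{lem:D}) one reads off, as $x\to+\infty$, $D_+(x)\sim c\,x^{-k}$ and $D_-(x)=-x-\log x+O(1)$, and, as $x\to-\infty$, $D_+(x)=-x+\log|x|+O(1)$ and $D_-(x)=O(|x|^{-k})$; also $G_+(-x)\le\ee^{-x}$. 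Thus a correct action barely moves a high public belief, a (rare) mistake at $\ell_t=x\gg1$ collapses it to $\ell_{t+1}=-\log x+O(1)$, a correct action from $\ell_t=-x\ll-1$ jumps it back up to $\log x+O(1)$, and mistakes cost almost nothing when $\ell_t$ is very negative. Second, conditionally on the past, agent $t$ errs with probability exactly $G_+(-\ell_t)$, which is at most $\ee^{-\ell_t}$ once $\ell_t\ge 0$. Third, $\mathbb{E}[\ell_{t+1}-\ell_t\mid\mathcal F_t]$ equals the Kullback--Leibler divergence between the two induced action distributions, hence is positive and bounded below on every compact set of $\ell$‑values.

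I would then fix a large constant $A$ and decompose time into excursions about level $A$: let $\sigma_0=\min\{t:\ell_t>A\}$, and recursively let $\tau_i$ be the first mistake at a time $\ge\sigma_{i-1}$ and $\sigma_i=\min\{t>\tau_i:\ell_t>A\}$; let $M$ be the (a.s.\ finite) number of finite $\tau_i$, i.e.\ the number of mistakes made while $\ell>A$. Two structural facts: one cannot have $\ell_t\le A$ without a mistake having occurred since $\ell$ was last above $A$, so no mistake occurs after $\sigma_M$ and $T_L\le\sigma_M$; and between $\sigma_{i-1}$ and $\tau_i$ there are no mistakes, so $\ell$ follows the deterministic recursion $x\mapsto x+D_+(x)$, whose solution $\psi_A$ from height $A$ grows like $(c(k+1)j)^{1/(k+1)}$. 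Hence the probability that a given above‑$A$ sojourn ever ends in a mistake is at most $q:=\sum_{j\ge1}\ee^{-\psi_A(j)}$, and $q\to0$ as $A\to\infty$. Choosing $A$ so that $q$ is small, the strong Markov property at the $\sigma_i$ gives $\mathbb{P}(M\ge i)\le q^i$, while the expected length of the $i$‑th above‑$A$ sojourn on $\{M\ge i\}$ is at most $\big(\sum_{j\ge1}j\,\ee^{-\psi_A(j)}\big)\mathbb{P}(M\ge i-1)$; summing over $i$, the time spent in above‑$A$ sojourns has finite expectation. The third input yields a hitting‑time estimate — the expected time for $\ell$ to climb from $y\le A$ up past $A$ is $O(|y|^k+1)$ — whose case $y=\ell_1=0$ gives $\mathbb{E}[\sigma_0]<\infty$.

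The heart is the total length $\sum_i(\sigma_i-\tau_i)$ of the below‑$A$ recoveries. For the hitting‑time estimate: by the increment asymptotics, an excursion of $\ell$ below a fixed negative level is a run of mistakes, each decreasing $|\ell|$ by $\Theta(|\ell|^{-k})$, terminated by one correct action that jumps $\ell$ from $\approx-v$ to $\approx+\log v$; the escape probability per step is $\Theta(v^{-k})$, so the escape takes $O(v^{k})$ steps in expectation, after which the positive drift contributes a further $O(1)$ expected time. Since a mistake at $\ell_{\tau_i}=:Y_i>A$ leaves $\ell_{\tau_i+1}=-\log Y_i+O(1)$, the $i$‑th recovery has conditional expected length $O((\log Y_i)^k+1)$; writing $a_i:=\mathbb{E}\!\left[(\log Y_i)^k\mathbbm{1}(M\ge i)\right]$ and conditioning on $\mathcal F_{\sigma_{i-1}}$ (using $Y_i=\psi_{\ell_{\sigma_{i-1}}}(\tau_i-\sigma_{i-1})$ and $\mathbb{P}(\tau_i-\sigma_{i-1}=j\mid\mathcal F_{\sigma_{i-1}})\le\ee^{-\psi_A(j)}$) I expect to obtain a recursion of the form
\[
  a_i\ \le\ C\,q\cdot\mathbb{E}\!\left[(\log\ell_{\sigma_{i-1}})^{k}\mathbbm{1}(M\ge i-1)\right]+C\,q^{\,i-1}.
\]
The point is that $\ell_{\sigma_{i-1}}$ is only doubly logarithmic in $Y_{i-1}$: $\ell$ re‑enters $(A,\infty)$ from $\approx\log|\ell_{\sigma_{i-1}-1}|$, where $|\ell_{\sigma_{i-1}-1}|$ is at most the depth $\asymp\log Y_{i-1}$ of the $(i-1)$‑st recovery plus an exponentially‑tailed overshoot; so $\mathbb{E}[(\log\ell_{\sigma_{i-1}})^k\mathbbm{1}(M\ge i-1)]\le a_{i-1}+C'q^{(i-1)/2}$, and for $A$ large enough that $Cq<1$ the recursion gives $a_i\le C''(i+1)\rho^{\,i}$ with $\rho<1$. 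Hence $\sum_i\mathbb{E}[(\sigma_i-\tau_i)\mathbbm{1}(M\ge i)]<\infty$, so $\mathbb{E}[\sigma_M]<\infty$, giving $\mathbb{E}(T_L\mid\theta=+1)<\infty$ and, by symmetry, $\mathbb{E}(T_L)<\infty$.

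I expect the main obstacle to be exactly this last step. The tempting shortcut — ``$\ell_t$ is small only if a mistake occurred within the last $O((\log t)^{k+1})$ steps, so the probability that $\ell_t$ is small is at most the sum of the recent per‑agent error probabilities'' — produces a self‑referential inequality for those probabilities whose prefactor diverges (like $(\log t)^{k+1}$, or like $\sqrt t$ if one enlarges the threshold), and it does not close. One is forced into the excursion bookkeeping above, and its delicate feature is that the depth, hence length, of a recovery is governed by how high $\ell$ had climbed before the triggering mistake — an a priori unbounded quantity — so that both the double logarithm relating the two and the geometric decay of $\mathbb{P}(M\ge i)$ must be used to make the sum converge. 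The remaining ingredients (the increment asymptotics, the submartingale/Kullback--Leibler identity, the hitting‑time estimate, and the two‑sided symmetry) are routine.
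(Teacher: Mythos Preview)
Your approach is genuinely different from the paper's, and it can be made to work, but two points deserve comment.

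First, a sign slip: when $\ell_t=-v$ with $v$ large and the agent takes action $-1$, one has $D_-(-v)=\log\frac{G_+(v)}{G_-(v)}\sim -c\,v^{-k}$, so $|\ell|$ \emph{increases} by $\Theta(|\ell|^{-k})$ during a bad run, not decreases. Fortunately this does not harm your escape--time estimate: the per--step escape probability is still $\Theta(v^{-k})$, the drift is slow enough that the depth at escape is $v+O(1)$ with stretched--exponential tails, and the expected waiting time is indeed $O(v^k)$.

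Second, the recursion you flag as the main obstacle is not actually needed. Conditioning on $\mathcal F_{\sigma_{i-1}}$ and writing $x=\ell_{\sigma_{i-1}}>A$, the bound $G_+(-y)\le\ee^{-y}$ gives
\[
  \mathbb{E}\!\left[(\log Y_i)^k\,\mathbbm{1}_{M\ge i}\,\middle|\,\mathcal F_{\sigma_{i-1}}\right]
  \ \le\ \sum_{j\ge 0}(\log\psi_x(j))^k\,\ee^{-\psi_x(j)}
  \ \le\ \sup_{x\ge A}\sum_{j\ge 0}(\log\psi_x(j))^k\,\ee^{-\psi_x(j)} \ =:\ C_A\ <\ \infty,
\]
since $\psi_x(j)\ge\psi_A(j)\sim(c(k+1)j)^{1/(k+1)}$. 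Hence $a_i\le C_A\,\mathbb{P}(M\ge i-1)\le C_A\,q^{\,i-1}$ directly, without any feedback through $\ell_{\sigma_{i-1}}$; the double--logarithm bookkeeping can be dropped. Combined with your hitting--time estimate $\mathbb{E}[\sigma_i-\tau_i\mid Y_i]=O((\log Y_i)^k+1)$ (whose justification needs the corrected sign but is otherwise fine), the sum $\sum_i\mathbb{E}[(\sigma_i-\tau_i)\mathbbm{1}_{M\ge i}]$ converges.

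For comparison, the paper takes a different route: it bounds the marginal mistake probability $\mathbb{P}_+(a_t=-1)\le\kappa\,t^{-2.1}$ pointwise in $t$ (Proposition~\ref{proposition:polynom_probability}), and then observes $\mathbb{P}_+(T_L=t)\le\mathbb{P}_+(a_{t-1}=-1)$. The pointwise bound is obtained by a three--way split according to whether the number of upsets $\Xi_t$ exceeds $\beta\log t$ and, if not, whether the majority of the first $t$ actions were correct; the key ingredients are the exponential tail of $\Xi$ (Proposition~\ref{prop:exponential_decay_of_runs}), the uniform bound $|\ell_s|\le M\,\ell_t^*$ (Lemma~\ref{lem:ratio to ell star bounded}), and the control of $\ell$ after a long good run (Lemma~\ref{lem:quickly_same_as_l_star}). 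The paper's approach yields, as a by--product, a quantitative decay rate for $\mathbb{P}_+(a_t=-1)$; your excursion decomposition is more direct and avoids the deterministic envelope $|\ell_s|\le M\ell_t^*$, at the cost of not producing that rate.
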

An example of private signal distributions $F_+$ and $F_-$ for which $G_-$ and $G_+$ have this form is given by the probability density functions
\begin{align*}
  f_-(x) = \begin{cases}
    c \cdot \ee^{-x}x^{-k-1}&\mbox{when } 1 \leq x \\
    0               &\mbox{when } -1 < x < 1\\
    c \cdot (-x)^{-k-1}&\mbox{when } x \leq -1.
    \end{cases}
\end{align*}
and $f_+(x) = f_-(-x)$, for an appropriate choice of normalizing
constant $c>0$. In this case $G_-(-x) = 1-G_+(x) = \frac{c}{k}x^{-k}$
for all $x>1$.\footnote{Theorem~\ref{thm:polytail-finite-expectation}
  can be proved for other thick-tailed private signal distributions:
  for example, one could take different values of $c$ and $k$ for
  $G_-$ and $G_+$, or one could replace their thick polynomial tails
  by even thicker logarithmic tails. For the sake of readability we
  choose to focus on this case.}

The proof of Theorem~\ref{thm:polytail-finite-expectation} is rather technically involved, and we provide here a rough sketch of the ideas behind it. 

We say that there is an {\em upset} at time $t$ if $a_{t-1} \neq
a_t$. We denote by $\Xi$ the random variable which assigns to each
outcome the total number of upsets
\begin{align*}
  \Xi = |\{t\,:\,a_{t-1} \neq a_t\}|.
\end{align*}
We say that there is a {\em run} of length $m$ from time $t$ if
$a_t = a_{t+1} = \cdots = a_{t+m-1}$. As we will
condition on $\theta=+1$ in our analysis, we say that a run from time
$t$ is {\em good} if $a_t = 1$ and {\em bad} otherwise. A trivial but
important observation is that the number of maximal finite runs is equal to the number of upsets, and so, if $\Xi = n$, and if $T_L=t$, then there
is at least one run of length $t/n$ before time $t$. Qualitatively,
this implies that if the number of upsets is small, and if the time to
learn is large, then there is at least one long run before the time to
learn.

We show that it is indeed unlikely that $\Xi$ is large: the
distribution of $\Xi$ has an exponential tail. Incidentally, this
holds for {\em any} private signal distribution:
\begin{proposition}
\label{prop:exponential_decay_of_runs}
  For every private signal distribution there exist $c > 0$ and
  $0 < \gamma < 1$ such that for all $n > 0$
  \begin{align*}
    \mathbb{P}(\Xi \geq n) \leq c \gamma^n.
  \end{align*}
\end{proposition}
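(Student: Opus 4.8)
Throughout I condition on the state, say $\theta=+1$; the case $\theta=-1$ is handled symmetrically, and averaging over the prior gives the claim. Conditionally on $\theta$, the process $(\ell_t)$ is a time-homogeneous Markov chain, since whether $a_t=+1$ depends only on $\ell_t$ and the fresh signal $L_t$, and $\ell_{t+1}=\ell_t+D_{a_t}(\ell_t)$. Two elementary facts drive the argument, both flowing from the identity $\dd G_+(u)=\ee^{u}\,\dd G_-(u)$ that holds because $L_t$ is a log-likelihood ratio (no regularity of $G_\pm$ is needed). \emph{(a)} $a_t=+1\Rightarrow\ell_{t+1}\ge 0$ and $a_t=-1\Rightarrow\ell_{t+1}\le 0$: if $\ell_t<0$ then $1-G_+(-\ell_t)=\int_{(-\ell_t,\infty)}\ee^{u}\,\dd G_-(u)\ge \ee^{-\ell_t}\bigl(1-G_-(-\ell_t)\bigr)$, so $D_+(\ell_t)\ge -\ell_t$ and hence $\ell_{t+1}\ge 0$; the other implications are similar. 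In particular, along any maximal run of $+1$'s the chain evolves deterministically, $\ell_{t+1}=\phi(\ell_t)$ with $\phi(x):=x+D_+(x)$. \emph{(b)} $D_+$ is strictly positive and bounded below on every bounded subset of $[0,\infty)$: the quantity $G_-(-x)-G_+(-x)=\int_{(-\infty,-x]}(1-\ee^{u})\,\dd G_-(u)$ is, for $x\ge 0$, nonincreasing and strictly positive (its integrand is nonnegative there, and $G_-$ charges $(-\infty,-x)$ by unboundedness), and $D_+(x)=\log\frac{1-G_+(-x)}{1-G_-(-x)}\ge G_-(-x)-G_+(-x)$; hence $D_+(x)\ge m_Y:=G_-(-Y)-G_+(-Y)>0$ for all $x\in[0,Y]$. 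Consequently $\phi^s(x)\to\infty$ for every $x\ge 0$, since an increasing bounded orbit would have to stall inside some interval $[0,Y]$.

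The engine of the proof is the estimate: there is $\rho_0>0$ such that, from any state $x\ge 0$,
\[
  \mathbb P\bigl(a_s=+1\ \text{for all }s\ge t\ \big|\ \ell_t=x,\ \theta=+1\bigr)\ \ge\ \rho_0 .
\]
This event forces $\ell_{t+s}=\phi^s(x)$ for all $s\ge 0$, so its probability is $\prod_{s\ge 0}\bigl(1-G_+(-\phi^s(x))\bigr)$; since $\phi^s(x)\ge x\ge 0$, every factor is at least $1-G_+(0)>0$, so it suffices to bound $\sum_{s\ge 0}G_+(-\phi^s(x))$ by a constant independent of $x$. Writing $y_s:=\phi^s(x)$, so $y_{s+1}-y_s=D_+(y_s)$, fix $Y$ large enough that for $y\ge Y$ one has both $D_+(y)\le 1$ and $G_+(-y)\le C\,\ee^{-y}D_+(y)$ — the latter with a fixed $C$, since $G_+(-y)\le \ee^{-y}G_-(-y)$ and $D_+(y)\ge c\,G_-(-y)$ for large $y$ (Lemma~\ref{lem:D}, or the elementary bound in (b)). The terms with $y_s<Y$ number at most $Y/m_Y$, each at most $1$; the terms with $y_s\ge Y$ satisfy $G_+(-y_s)\le C\,\ee^{-y_s}(y_{s+1}-y_s)\le C\ee\int_{y_s}^{y_{s+1}}\ee^{-y}\,\dd y$, and summing over the disjoint intervals $[y_s,y_{s+1}]\subseteq[Y,\infty)$ gives at most $C\ee\cdot\ee^{-Y}$. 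Hence $\sum_{s}G_+(-\phi^s(x))\le C_0<\infty$ uniformly in $x\ge 0$, and $\prod_s(1-G_+(-\phi^s(x)))\ge \ee^{-C'C_0}=:\rho_0>0$.

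To finish, call an upset a $(+)$-upset if $a_t=+1$ (equivalently $a_{t-1}=-1$), and let $V_1<V_2<\cdots$ be the times of the $(+)$-upsets. At time $V_j+1$ we have $a_{V_j}=+1$, hence $\ell_{V_j+1}\ge 0$ by (a); so by the strong Markov property at $V_j+1$ together with the estimate just proved, $\mathbb P(\text{some upset occurs after }V_j\mid\mathcal F_{V_j+1})\le 1-\rho_0$ on $\{V_j<\infty\}$. Iterating, $\mathbb P(V_m<\infty\mid\theta=+1)\le(1-\rho_0)^{m-1}$ for all $m$. Since upsets alternate between $(+)$-upsets and $(-)$-upsets, $\Xi\le 2\,\#\{(+)\text{-upsets}\}+1$, so $\mathbb P(\Xi\ge n\mid\theta=+1)\le(1-\rho_0)^{\lceil(n-1)/2\rceil-1}$. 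Averaging this with the symmetric bound for $\theta=-1$ yields $\mathbb P(\Xi\ge n)\le c\gamma^n$ with $\gamma=\sqrt{1-\rho_0}$ and a suitable constant $c$.

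The main obstacle is the uniform summability $\sup_{x\ge 0}\sum_{s}G_+(-\phi^s(x))<\infty$ in the Key Lemma: it requires both that the deterministic $+1$-orbit always escapes to $+\infty$, and the quantitative statement that the chance $G_+(-y)$ of a ``wrong'' action is small compared to the drift $D_+(y)\asymp G_-(-y)$ — the decisive gain $G_+(-y)/G_-(-y)\le \ee^{-y}$ being exactly the content of the private signals being log-likelihood ratios. Everything after that is bookkeeping with the strong Markov property and the alternation of upset types.
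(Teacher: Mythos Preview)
Your proof is correct and follows the same overall architecture as the paper's: establish a uniform lower bound $\rho_0>0$ on the probability, from any state $\ell_t\ge 0$, that all subsequent agents take the correct action; then iterate via the (strong) Markov property at the successive starts of good runs to get an exponential tail for $\Xi$, using that upsets alternate in type.

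The substantive difference is in how the key uniform bound is obtained. The paper's Lemma~\ref{lem:all-correct} uses a martingale argument: since $(\mu_t)$ is a bounded martingale, optional stopping gives $\mathbb{P}(\mu_\tau\le 1/2\text{ for some }\tau>t\mid\mu_t=q)\le 2(1-q)$, which immediately yields a positive lower bound for $x$ bounded away from $0$; a short compactness/continuity step then covers the remaining interval down to $x=0$. You instead compute the infinite product $\prod_{s\ge 0}\bigl(1-G_+(-\phi^s(x))\bigr)$ directly, exploiting the log-likelihood identity $\dd G_+=\ee^{u}\,\dd G_-$ to get $G_+(-y)\le \ee^{-y}G_-(-y)\lesssim \ee^{-y}D_+(y)$ and then summing telescopically against $\int \ee^{-y}\,\dd y$. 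Your route is more hands-on but has the advantage of not invoking any regularity of $G_\pm$ (Lemma~\ref{lem:all-correct}, as stated, carries continuity and convex-tail hypotheses, whereas the proposition claims the result for every signal distribution), so in that sense your argument matches the announced generality more cleanly. The paper's martingale approach, on the other hand, is shorter and conceptually portable to settings where the explicit product structure is not available. Your observation~(a), that $a_t=+1$ forces $\ell_{t+1}\ge 0$, is also a nice replacement for the paper's implicit use of this fact when applying the bound at the start of a good run.
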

Intuitively, this holds because whenever an agent takes the correct
action, there is a non-vanishing probability that all subsequent agents
will also do so, and no more upsets will occur.

Thus, it is very unlikely that the number of upsets $\Xi$ is large. As
we observe above, when $\Xi$ is small then the time to learn $T_L$ can
only be large if at least one of the runs is long. When $G_-$ has a
thin tail then this is possible; indeed,
Theorem~\ref{probability_of_mistake} shows that the first finite run has
infinite expected length when private signals are Gaussian. However,
when $G_-$ has a thick, polynomial left tail of order $x^{-k}$, we
show that it is very unlikely for any run to be long: the probability
that there is a run of length $n$ decays at least as fast as
$\exp(-n^{k/(k+1)})$, and in particular runs have finite expected
length. Intuitively, when strong signals are rare then runs tend to be
long, as agents are likely to emulate their predecessor. Conversely,
when strong signals are more likely then agents are more likely to
break a run, and so runs tend to be shorter.

Putting together these insights, we conclude that it is unlikely that
there are many runs, and, in the polynomial signal case, it is
unlikely that runs are long. Thus $T_L$ has finite expectation.

\subsection{Probability of taking the wrong action}
Yet another natural metric of the speed of learning is the probability
of mistake
\begin{align*}
 p_t =\mathbb{P}(a_t\neq \theta).
\end{align*}
Calculating the asymptotic behavior of $p_t$ seems harder to tackle. 

For the Gaussian case, while we cannot estimate $p_t$ precisely, Theorem~\ref{probability_of_mistake} immediately implies a lower bound: $p_t$ is at least $k / t^{1+\varepsilon}$, for every $\varepsilon>0$ and $k$ that depends on $\varepsilon$. This is much larger than the exponentially vanishing probability of mistake in the revealed signal baseline case.

More generally, we can use Theorem~\ref{thm:sublinear} to show that
$p_t$ vanishes sub-exponentially for any signal distribution, in the sense that 
\begin{align*}
    \lim_{t \to \infty} \frac{1}{t}\log p_t = 0.
\end{align*}
To see this, note that the probability of mistake at time $t-1$, conditioned on the observed actions, is exactly equal to 
\begin{align*}
    \min\{\mu_t,1-\mu_t\};
\end{align*}
where we recall that 
\begin{align*}
    \mu_t= \mathbb{P}(\theta=+1 | a_1, \dots, a_{t-1}) = \frac{\ee^{\ell_t}}{\ee^{\ell_t}+1} 
\end{align*} 
is the public belief. This is due to the fact that if the outside observer, who holds belief $\mu_t$, had to choose an action, she would choose $a_{t-1}$, the action of the last player she observed, a player who has strictly more information than her. Thus
\begin{align*}
    p_t = \mathbb{E}(\min\{\mu_t,1-\mu_t\}) = \mathbb{E}\left(\frac{1}{\ee^{|\ell_t|}+1}\right),
\end{align*}
and since, by Theorem~\ref{thm:sublinear}, $|\ell_t|$ is sub-linear, it follows that $p_t$ is sub-exponential.

\section{Conclusion}
In this paper we consider a classical setting of sequential asymptotic learning from actions of others. We show that learning from actions is slow, as compared to the speed of learning when observing others' private signals, in the sense that the public log-likelihood ratio tends more slowly to infinity. However, it is possible to approach the linear rate of learning from signals and achieve any sub-linear rate.

We calculate the speed of learning precisely in the case of Normal private signals (among a large class of private signal distributions) and show that learning is very slow. We also show that in the Gaussian case the expected time to learn is infinite, as opposed to cases of more thick-tailed distributions, in which it is finite.

For the Gaussian case we also provide a lower bound for the probability of mistake. Finding a matching upper bound seems beyond our reach at the moment, and provides a compelling open problem for further research.

\bibliographystyle{abbrv}
\bibliography{main}
\newpage

\appendix

\section{Sub-linear learning}
Before proving our main theorems we make the observation (which has appeared before, e.g.,~\cite{chamley2004rational}) that {\em the log-likelihood ratio of the log-likelihood ratio is the log-likelihood ratio}. Formally, if $\nu_+$ and $\nu_-$ are the conditional distributions of the private log-likelihood ratio $L_t$ (i.e., have CDFs $G_+$ and $G_-$), then
\begin{align*}
    \log\frac{\dd\nu_+}{\dd\nu_-}(x) = x.
\end{align*}
It follows that
\begin{align}
   \label{eq:nu}
    G_+(x) = \int_{-\infty}^x \,\dd\nu_+(\zeta) = \int_{-\infty}^x \ee^\zeta \, \dd\nu_-(\zeta).
\end{align}

Our first lemma shows that asymptotically, $D_+$ behaves like the left tail of $G_-$, and $D_-$ behaves like the right tail of $G_+$.
\begin{lem}
\label{lem:D}
\begin{align*}
    \lim_{x \rightarrow \infty}{\frac{D_{+}(x)}{G_{-}(-x)}} = 1 \text{ and } \lim_{x \rightarrow -\infty}{\frac{D_{-}(x)}{G_{+}(-x)-1}} = 1.
\end{align*}
\end{lem}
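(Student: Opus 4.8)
The plan is to work directly from the definition $D_+(x) = \log\bigl((1-G_+(-x))/(1-G_-(-x))\bigr)$ and show that the numerator and denominator of the fraction $D_+(x)/G_-(-x)$ both tend to $0$ as $x\to\infty$, with the right ratio. Write $a(x) = 1-G_-(-x) = \nu_-((-x,\infty))$ and $b(x) = 1-G_+(-x) = \nu_+((-x,\infty))$; both are tails of probability measures and hence $\to 0$ as $x\to\infty$ since $G_\pm$ are CDFs. Then $D_+(x) = \log(b(x)/a(x))$, and since $\log(1+u)\sim u$ as $u\to 0$, it suffices to show that $b(x)/a(x) \to 1$, because then $D_+(x) = \log(b(x)/a(x)) = \log\bigl(1 + (b(x)-a(x))/a(x)\bigr) \sim (b(x)-a(x))/a(x)$, and we separately need $(b(x)-a(x))/a(x)$ to be comparable to $a(x)$ itself — wait, more carefully: we want $D_+(x)/a(x)\to 1$, i.e.\ $\log(b(x)/a(x))/a(x)\to 1$. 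So the real claim is that $b(x)/a(x) = 1 + a(x) + o(a(x))$ as $x\to\infty$. This is exactly where the key identity~\eqref{eq:nu} enters.

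The central computation uses $\dd\nu_+ = \ee^\zeta\,\dd\nu_-$ (the ``log-likelihood ratio of the log-likelihood ratio is the log-likelihood ratio'' observation). Hence
\begin{align*}
  b(x) = \nu_+((-x,\infty)) = \int_{-x}^\infty \ee^\zeta\,\dd\nu_-(\zeta).
\end{align*}
The mass of $\nu_-$ on $(-x,\infty)$ is $a(x)$; the point is that, conditioned on $L_t > -x$ (under $\theta=-1$), the variable $L_t$ is unlikely to be much larger than $-x$ when $x$ is large, so $\ee^\zeta$ on the domain of integration is, typically, close to $\ee^{-x}$, which is tiny. Concretely I would split $\int_{-x}^\infty \ee^\zeta\,\dd\nu_-(\zeta) = \int_{-x}^{-x/2}\ee^\zeta\,\dd\nu_-(\zeta) + \int_{-x/2}^\infty \ee^\zeta\,\dd\nu_-(\zeta)$. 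The first integral is between $\ee^{-x}a(x)$ (trivially, lower bound) — actually between $\ee^{-x}\cdot\nu_-((-x,-x/2])$ and $\ee^{-x/2}\cdot\nu_-((-x,-x/2])$; since $\ee^{-x/2}\to 0$ this whole piece is $o(a(x))\cdot$(something) — I need $\nu_-((-x,-x/2]) \le a(x)$, so the first piece is at most $\ee^{-x/2}a(x) = o(a(x))$ provided... no, I want it to be $a(x)(1+o(1))$ times something of order $a(x)$. Let me instead note $b(x) = \int_{-x}^\infty \ee^\zeta\,\dd\nu_-(\zeta) \le \ee^{-x}\nu_-((-x,\infty)) \cdot(\text{bad if mass is at large }\zeta)$. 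The clean statement I will prove is the two-sided bound: $\ee^{-x}a(x) \le b(x)$ is false in general; rather $b(x) \le$ ? Here the convexity-free argument is: $b(x)/a(x)$ is the $\nu_-$-conditional expectation of $\ee^\zeta$ given $\zeta>-x$, which lies in $(\ee^{-x},\infty)$, and I must show it is $\to 0$, equivalently that $b(x) = o(a(x))$, and moreover pin down $b(x)/a(x) \to 0$ at rate making $\log(b(x)/a(x))\sim a(x)$. Hmm — $b(x)/a(x)\to 0$ alone does not give $\log(b(x)/a(x))\sim a(x)$.

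So the honest structure is: show (i) $1 - G_+(-x) = o\bigl(1-G_-(-x)\bigr)$, hence $D_+(x) = \log(1 - G_-(-x)) - \log(1-G_+(-x)) = -\log\bigl(1-G_-(-x)\bigr) + \log\frac{1-G_-(-x)}{1-G_+(-x)}$; and (ii) $-\log(1-G_-(-x)) = G_-(-x)(1+o(1))$ since $G_-(-x)\to0$; and (iii) the correction term $\log\frac{1-G_-(-x)}{1-G_+(-x)} = -\log\bigl(1 - b(x)/a(x)\bigr)\cdot$... I realize I have the fraction inverted. Let me just say: the proof reduces, via $\log(1+u)\sim u$ applied to $u = -G_\pm(-x)$, to showing $G_+(-x) = o(G_-(-x))$, after which $D_+(x) = \log\frac{1-G_+(-x)}{1-G_-(-x)} = \bigl(G_-(-x) + o(G_-(-x))\bigr) - \bigl(G_+(-x)+o(G_+(-x))\bigr) = G_-(-x)(1+o(1))$ — no wait the signs: $\log(1-G_+(-x)) \approx -G_+(-x)$ and $\log(1-G_-(-x))\approx -G_-(-x)$, so $D_+(x) \approx -G_+(-x) + G_-(-x) = G_-(-x) - G_+(-x) = G_-(-x)(1 + o(1))$ once $G_+(-x) = o(G_-(-x))$.

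Hence the \textbf{one real lemma} to prove is: $\lim_{x\to\infty} G_+(-x)/G_-(-x) = 0$, i.e.\ the left tail of $G_+$ is negligible against the left tail of $G_-$. This follows from~\eqref{eq:nu}:
\begin{align*}
  G_+(-x) = \int_{-\infty}^{-x}\ee^\zeta\,\dd\nu_-(\zeta) \le \ee^{-x}\int_{-\infty}^{-x}\dd\nu_-(\zeta) = \ee^{-x}G_-(-x),
\end{align*}
so $G_+(-x)/G_-(-x)\le \ee^{-x}\to 0$. That is the crux, and it is a one-line estimate. The symmetric statement $\lim_{x\to-\infty}D_-(x)/(G_+(-x)-1) = 1$ is handled identically: $D_-(x) = \log\frac{G_+(-x)}{G_-(-x)}$ and one uses $1-G_-(-x) = \nu_-((-x,\infty)) = \int_{-x}^\infty \dd\nu_-(\zeta) \le \ee^{x}\int_{-x}^\infty \ee^\zeta\,\dd\nu_-(\zeta) = \ee^x\bigl(1-G_+(-x)\bigr)$ for $x\to-\infty$ (where $\ee^x\to0$), giving $\bigl(1-G_-(-x)\bigr) = o\bigl(1-G_+(-x)\bigr)$, and then the same $\log(1+u)\sim u$ expansion of $D_-(x) = \log(G_+(-x)) - \log(G_-(-x)) = \log\bigl(1-(1-G_+(-x))\bigr) - \log\bigl(1-(1-G_-(-x))\bigr)\approx -(1-G_+(-x)) + (1-G_-(-x)) = (G_-(-x)-1) - (G_+(-x)-1)$, wait I want this over $G_+(-x)-1$; $= \bigl(G_+(-x)-1\bigr)\bigl(1+o(1)\bigr)$ after noting $1-G_-(-x) = o(1-G_+(-x))$. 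I expect \textbf{no serious obstacle}: the only subtlety is making sure the ``$o$'' terms from the two logarithm expansions combine correctly, i.e.\ that the dominant tail $G_-(-x)$ is what survives; this is guaranteed precisely by the exponential domination $G_+(-x)\le \ee^{-x}G_-(-x)$, which also certifies that $G_+(-x) = o(G_-(-x))$ is not merely qualitative but exponentially strong, far more than enough. I would present it as: first state and prove the tail-domination via~\eqref{eq:nu}, then expand the logarithms.
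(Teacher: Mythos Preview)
Your final argument is correct and is essentially the paper's proof: both expand $\log(1-z)\sim -z$ to reduce $D_+(x)$ to $G_-(-x)-G_+(-x)+o(G_-(-x))$, and both use~\eqref{eq:nu} to get the tail domination $G_+(-x)\le \ee^{-x}G_-(-x)$ (the paper keeps this inside the integral $\int_{-\infty}^{-x}(1-2\ee^\zeta)\,\dd\nu_-$ rather than isolating it, but it is the same estimate). Your exploratory opening does contain a slip---$a(x)=1-G_-(-x)$ and $b(x)=1-G_+(-x)$ tend to $1$, not $0$, as $x\to\infty$---but you catch and correct this before stating the actual proof, so there is no gap.
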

\begin{proof}
By definition,
\begin{align*}
    D_+(x) = \log\frac{1-G_+(-x)}{1-G_-(-x)}.
\end{align*}
Since $\log(1-z)=-z+O(z^2)$, it holds for all $x$ large enough that
\begin{align*}
    D_+(x) > G_-(-x) - 2\cdot G_+(-x).
\end{align*}
Applying~\eqref{eq:nu} yields
\begin{align*}
    D_+(x) > \int_{-\infty}^{-x}(1-2\ee^\zeta)\,\dd\nu_-(\zeta),
\end{align*}
and so for any $\varepsilon$ and all $x$ large enough,
\begin{align*}
    D_+(x) > (1-\epsilon)\cdot \int_{-\infty}^{-x}\,\dd\nu_-(\zeta) = (1-\epsilon)G_-(-x).
\end{align*}
Using the same approximation of the logarithm, we have that
\begin{align*}
    D_+(x) < (1+\epsilon) G_-(-x)-G_+(-x) < (1+\epsilon) G_-(-x).
\end{align*}
The statement for $D_+$ now follows by taking $\varepsilon$ to zero.
The corresponding bounds on $D_-$ follow by identical arguments.
\end{proof}

\begin{proof}[Proof of Theorem~\ref{thm:sublinear}]
Condition on $\theta = +1$. Then $\ell_t$ is with probability 1 positive from some point on, and all agents take action $+1$ from this point on. Hence, for all $t$ large enough,
\begin{align*}
    \ell_{t+1} = \ell_t + D_+(\ell_t).
\end{align*}
By Lemma~\ref{lem:D}, we know that $\lim_x D_+(x)=0$. Hence for every $\epsilon > 0$ and all $t$ large enough, $|\ell_{t+1}-\ell_t| < \epsilon$. It follows that the limit $\lim_t \ell_t/t=0$. The analysis of the case $\theta=-1$ is identical.
\end{proof}

\begin{proof}[Proof of Theorem~\ref{thm:fast-sublinear}]
  Given $r_t$, we will construct private signal distributions such that $\liminf_t |\ell_t|/r_t > 0$ with probability one. These distributions will furthermore have the property that $D_+(x) = -D_-(-x)$. As a consequence we have that regardless of the action chosen
  by the agent, as long as the sign of the action is equal to that of
  $\ell_t$ (which happens from some point on w.p.\ 1),
  \begin{align*}
    |\ell_{t+1}| = |\ell_t| + D_+(|\ell_t|).
  \end{align*}
  Intuitively, if we can choose private signal distributions that make $D_+(x)$ decay very slowly, then $\ell_t$ will be very close to being linear.

  Formally, and by elementary considerations, the theorem will follow if, for every
  $Q \colon \R \to \R_{>0}$ with $\lim_{x\to \infty} Q(x) = 0$, we can find
  CDFs such that $D_+(x) = -D_-(-x)$ and $\liminf_{x \to \infty}D_+(x)/Q(x) > 0$.
  
  Fix any $Q$ such that $\lim_{x \to \infty}Q(x)=0$, but assume without loss of generality that $Q(x)$ is monotone
  decreasing.\footnote{If $Q$ is not monotone decreasing then consider instead $Q'(x) = \sup_{y \geq x}Q(y)$.} Define a finite measure $\nu$ on the integers by
  \begin{align*}
    \nu(n) = \frac{Q(n-1) - Q(n)}{\ee^n}
  \end{align*}
  and
  \begin{align*}
    \nu(-n) = Q(n-1) - Q(n)
  \end{align*}
  for all $n \geq 0$. Note that $\nu$ is indeed finite since
  \begin{align*}
    C := \sum_{n=-\infty}^\infty \nu(n) \leq 2 Q(-1).
  \end{align*}
  Note also that
  \begin{align*}
    \sum_{n=-\infty}^\infty \nu(n) \cdot \ee^n
  \end{align*}
  is likewise equal to $C$.
  
  Let the private signal distributions be given by
  \begin{align*}
    \mathbb{P}(s_t= n|\theta=+1) = C^{-1}\nu(n)\ee^n
  \end{align*}
  and
  \begin{align*}
    \mathbb{P}(s_t= n|\theta=-1) = C^{-1}\nu(n).
  \end{align*}
  Then 
  \begin{align*}
   L_t = \log\frac{\mathbb{P}(s_t|\theta=+1)}{\mathbb{P}(s_t|\theta=-1)}=s_t,
  \end{align*}
  the distribution of $L_t$ is identical to
  that of $s_t$, and so $G_{+}=F_{+}$ and $G_{-}=F_{-}$. By our definition of $F_{-}$, we have that for $x > 0$
  \begin{align}
  \label{eq:G}
      G_-(-x) = C^{-1}\cdot Q(\lceil x \rceil -1).
  \end{align}
  
  Now, by Lemma~\ref{lem:D}, we know that
  \begin{align*}
    (1-\epsilon)\cdot G_-(-x) < D_+(x) < (1+\epsilon) \cdot G_-(-x),
  \end{align*}
  for any $\epsilon>0$ and all $x$ large enough. It follows that
  \begin{align*}
      \liminf_{x \to \infty} \frac{D_+(x)}{Q(x)} = \liminf_{x \to \infty}\frac{G_-(-x)}{Q(x)},
  \end{align*}
  which, by~\eqref{eq:G} equals
  \begin{align*}
      \liminf_{x \to \infty}\frac{C^{-1}Q(\lceil x \rceil-1)}{Q(x)} \geq C^{-1}.
  \end{align*}

\end{proof}

\section{Long-term behavior of public belief}

The primary goal of this section is to prove Theorem~\ref{thm:diff}, which states that public belief is asymptotically given by the solution to the differential equation \eqref{eq:diff equation}. The proof of this theorem uses two general lemmas regarding recurrence relations. We state these lemmas now and prove them later. The first lemma states that two similar recurrence relations yield similar solutions. The second shows that the solution to a recurrence relation (of the type we are interested in) is well approximated by the solution to the corresponding differential equation.

\begin{lem}
\label{lem:diff_first}
Let $A,B \colon \mathbb{R}_{>0} \to \mathbb{R}_{>0}$ be continuous, eventually monotone decreasing, and tending to zero. 

Let $(a_t)$ and $(b_t)$ be sequences satisfying the recurrence relations
\begin{align*}
    a_{t+1}&=a_t+A(a_t)\\
    b_{t+1}&=b_t+B(b_t).
\end{align*}
Suppose
\begin{align*}
    \lim_{x\rightarrow\infty}\frac {A(x)}{B(x)}=1.
\end{align*}
Then
\begin{align*}
    \lim_{t\rightarrow\infty}\frac {a_t}{b_t}=1.
\end{align*}

\end{lem}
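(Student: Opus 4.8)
The plan is to compare both sequences to the continuous flow and then use the hypothesis $A/B \to 1$ to conclude. First I would observe that since $A$ and $B$ are eventually monotone decreasing and tend to zero, both $(a_t)$ and $(b_t)$ are eventually monotone increasing with vanishing increments, hence tend to $+\infty$ (if either stayed bounded, the increments $A(a_t)$ would be bounded below by a positive constant, a contradiction). So it suffices to analyze the tails, where $A(x)/B(x)$ is close to $1$ and both functions are positive and decreasing.

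The key step is to set up the comparison via a change of variables that linearizes the recurrence. For a positive decreasing function $C$ tending to zero, define $\Phi_C(x) = \int_{x_0}^x \frac{\dd u}{C(u)}$; this is increasing and, because $C \to 0$, tends to $+\infty$. The point is that for a sequence satisfying $c_{t+1} = c_t + C(c_t)$ with $C$ decreasing, one has
\begin{align*}
  \Phi_C(c_{t+1}) - \Phi_C(c_t) = \int_{c_t}^{c_t + C(c_t)} \frac{\dd u}{C(u)} = 1 + o(1),
\end{align*}
since on the interval $[c_t, c_t + C(c_t)]$ the integrand $1/C(u)$ lies between $1/C(c_t)$ and $1/C(c_t + C(c_t))$, and the ratio of these endpoints tends to $1$ (because the increment $C(c_t) \to 0$ and $C$ is slowly varying in the weak sense that $C(c_t+C(c_t))/C(c_t)\to 1$; this last fact itself follows from $C$ being decreasing with $\Phi_C$ finite-difference behaving regularly — I would verify it directly, or, if needed, add it as a mild regularity hypothesis). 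Summing gives $\Phi_C(c_t) = t + o(t)$, i.e. $\Phi_C(c_t) \sim t$. Applying this to both $C = A$ and $C = B$ yields $\Phi_A(a_t) \sim t \sim \Phi_B(b_t)$.

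It then remains to transfer the asymptotic equivalence from the $\Phi$-transformed variables back to $a_t$ and $b_t$ themselves. Here I would use that $A(x)/B(x) \to 1$ implies $\Phi_A(x)/\Phi_B(x) \to 1$ (by L'Hôpital / Cesàro for integrals: the ratio of integrands tends to $1$ and both integrals diverge). Combined with $\Phi_A(a_t) \sim \Phi_B(b_t)$, this gives $\Phi_B(a_t) \sim \Phi_B(b_t)$. Finally, to deduce $a_t \sim b_t$ from $\Phi_B(a_t) \sim \Phi_B(b_t)$, I would argue that $\Phi_B$, being an integral of a decreasing positive function going to zero, is \emph{sublinear at infinity} in the strong sense that $\Phi_B(\lambda x)/\Phi_B(x) \to 1$ is false in general — so I cannot argue that way. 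Instead: suppose $a_t / b_t \not\to 1$; then along a subsequence $a_t \geq (1+\delta) b_t$ (or the reverse), and then $\Phi_B(a_t) - \Phi_B(b_t) \geq \int_{b_t}^{(1+\delta)b_t} \frac{\dd u}{B(u)} \geq \frac{\delta b_t}{B(b_t)} \cdot \frac{B(b_t)}{B((1+\delta)b_t)} \to \infty$, while $\Phi_B(b_t) \sim t$, so we would need $\Phi_B(a_t)/\Phi_B(b_t) \not\to 1$, a contradiction — \emph{provided} $b_t/B(b_t) \to \infty$ relative to $t$, which holds since $\Phi_B(b_t)\sim t$ forces $B(b_t)$ to be bounded above and the integral to spread out. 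I expect the main obstacle to be exactly this last transfer step: controlling $\Phi_B(a_t)-\Phi_B(b_t)$ from below when $a_t, b_t$ are comparable but not equivalent, and making rigorous the "slowly varying" behavior of $B$ along the sequences. The convexity hypothesis in Theorem~\ref{thm:diff} is presumably what is used downstream to guarantee this regularity; within this lemma I would either extract the precise regularity needed ($B(x+B(x))/B(x)\to 1$ and $B((1+\delta)x)/B(x)$ bounded away from $0$) from "eventually monotone decreasing to zero" plus a short argument, or note explicitly that these follow.
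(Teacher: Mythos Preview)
Your approach via the integral transform $\Phi_C(x)=\int^x \dd u/C(u)$ is natural, but the key intermediate claim you lean on --- that $C(c_t+C(c_t))/C(c_t)\to 1$, so that $\Phi_C(c_{t+1})-\Phi_C(c_t)=1+o(1)$ --- does \emph{not} follow from the stated hypotheses. For a concrete obstruction, build a continuous decreasing $C$ so that along the orbit $(c_s)$ one has $C(c_{s+1})=\tfrac12 C(c_s)$ whenever $s$ is a power of~$2$ and $C(c_{s+1})=C(c_s)$ otherwise; then $C(c_s)$ is of order $1/s$, so $\sum C(c_s)=\infty$ and $c_t\to\infty$, yet the ratio $C(c_{t+1})/C(c_t)$ equals $1/2$ infinitely often. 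You flag this gap yourself and suggest ``add[ing] it as a mild regularity hypothesis,'' but the lemma is stated without any such hypothesis, so that route does not prove it. (It is plausible that the weaker statement $\Phi_C(c_t)\sim t$ survives under the bare hypotheses --- it does in the example above --- but you have not established it, and your final transfer step also rests on it.)

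The paper's proof is entirely different and sidesteps the integral. It argues directly in two elementary steps: first, for each $\varepsilon>0$ the inequality $(1-\varepsilon)a_t\le b_t\le(1+\varepsilon)a_t$ must hold at \emph{some} time (if, say, $a_t\le b_t$, then $b_{t+1}-b_t=B(b_t)\le(1+\varepsilon/2)A(b_t)\le(1+\varepsilon/2)A(a_t)=(1+\varepsilon/2)(a_{t+1}-a_t)$ by monotonicity, so the relative gap cannot persist); second, once the ratio enters $[1-\varepsilon,1+\varepsilon]$ it stays there, by a one-step induction using only that $A$ is decreasing and $|A/B-1|<\varepsilon/2$. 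No regularity beyond the stated hypotheses is used. The flow/differential-equation comparison you are reaching for is carried out in the paper as a \emph{separate} lemma (Lemma~\ref{lem:recur_rel_soln_same_as_diff_eq_soln}), precisely under the extra convexity assumption you suspected would be needed --- the paper's organization keeps the two concerns apart.
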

\vspace{0.25in}

\begin{lem}
\label{lem:recur_rel_soln_same_as_diff_eq_soln}

Assume that $A \colon \mathbb{R}_{>0}\rightarrow\mathbb{R}_{>0}$ is a continuous function with a convex differentiable
tail, and that $A(x)$ goes to $0$ as $x$ goes to $\infty$. Let
$(a_t)$ be any sequence satisfying the recurrence equation
$a_{t+1}=a_t+A(a_t)$, and suppose there is a function
$f:\mathbb{R}_{>0}\rightarrow\mathbb{R}_{>0}$ with $f'(t)=A(f(t))$ for all
sufficiently large $t$.  Then
\begin{align*}
  \lim_{t \rightarrow \infty}{\frac{f(t)}{a_t}} = 1.
\end{align*}
\end{lem}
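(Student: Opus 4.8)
The plan is to measure the gap between the recurrence and the differential equation as a lag in time, and to show this lag grows slowly enough not to affect the leading order. A few preliminaries follow immediately from the hypotheses. Since $A>0$, the sequence $(a_t)$ is strictly increasing, and it cannot be bounded: a finite limit would force $A(a_t)=a_{t+1}-a_t\to 0$, contradicting continuity and positivity of $A$; the same argument gives $f(t)\to\infty$. Because $A$ is convex and tends to $0$ on its tail, $A$ is eventually decreasing with $A'\le 0$ and $A'\to 0$; hence $f$ is eventually concave ($f''=A'(f)\,f'\le 0$), and $f'/f=A(f)/f$ is eventually positive, decreasing and tending to $0$, as is $f'$ itself. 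For all sufficiently large $t$ I set $s_t:=f^{-1}(a_t)$ (well-defined, since $f$ increases continuously to $\infty$) and $\eta_t:=s_t-t$. Because $a_t=f(s_t)$ we have $\log a_t-\log f(t)=\int_t^{s_t} f'(u)/f(u)\,\dd u$, so the lemma reduces to proving $|\eta_t|\cdot f'(t)/f(t)\to 0$.

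I would then bound the per-step change of the lag. As $(f^{-1})'(u)=1/A(u)$ and $a_{t+1}=a_t+A(a_t)$, monotonicity of $A$ gives
\[
1 \;\le\; s_{t+1}-s_t \;=\; \int_{a_t}^{a_{t+1}}\frac{\dd u}{A(u)} \;\le\; \frac{A(a_t)}{A(a_{t+1})} ,
\]
so $(\eta_t)$ is eventually non-decreasing. Convexity of $A$ gives $A(a_{t+1})=A\!\left(a_t+A(a_t)\right)\ge A(a_t)\bigl(1+A'(a_t)\bigr)$, and since $A'(a_t)\to 0$ the display above becomes $s_{t+1}-s_t-1\le 2\,|A'(a_t)|$ for all large $t$. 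A Riemann-sum comparison of $\sum_\tau|A'(a_{\tau+1})|$ with $\int\frac{|A'(u)|}{A(u)}\,\dd u=-\log A(u)$ on the intervals $[a_\tau,a_{\tau+1}]$ gives $\sum_{\tau<t}|A'(a_\tau)|\le -\log A(a_t)+O(1)$, whence $\eta_t\le -2\log A(a_t)+O(1)$.

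The main obstacle is that this estimate is phrased through $A(a_t)=f'(s_t)$, which again hides the lag, so I would close the loop by a short bootstrap. Since $(\eta_t)$ is eventually monotone, it is either eventually $\ge 0$ or stays negative and bounded; in the latter case $|\eta_t|\cdot f'(t)/f(t)\to 0$ is immediate, so assume $s_t\ge t$. Then, using concavity of $f$ and the fact that $|A'\circ f|$ is decreasing,
\[
\bigl|\log f'(t)-\log f'(s_t)\bigr| \;=\; \int_{t}^{s_t}\!|A'(f(u))|\,\dd u \;\le\; |A'(f(t))|\cdot\eta_t ,
\]
and $|A'(f(t))|\to 0$, so substituting $\log f'(s_t)$ by $\log f'(t)$ in the bound $\eta_t\le-2\log A(a_t)+O(1)$ costs only $O(|A'(f(t))|\,\eta_t)$; solving the resulting inequality for $\eta_t$ yields $\eta_t=O\!\left(-\log f'(t)\right)$.

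Finally, for any fixed $c$ one has $f'(t-c)/f'(t)\to 1$ (since $\log f'(t-c)-\log f'(t)=\int_{t-c}^{t}|A'(f(u))|\,\dd u\to 0$) and $f(t-c)/f(t)\to 1$ (since $f(t)-f(t-c)\le c\,f'(t-c)\to 0$ while $f(t)\to\infty$). Using $s_t\ge t-O(1)$ and that $f'/f$ is decreasing,
\[
\bigl|\log a_t-\log f(t)\bigr| \;=\; \left|\int_t^{s_t}\frac{f'(u)}{f(u)}\,\dd u\right| \;\le\; |\eta_t|\cdot\frac{f'(t-O(1))}{f(t-O(1))} \;=\; O\!\left(\frac{(-\log f'(t))\,f'(t)}{f(t)}\right) .
\]
Since $f'(t)\to 0$ makes $(-\log f'(t))\,f'(t)\to 0$ while $f(t)\to\infty$, the right-hand side tends to $0$, so $a_t/f(t)\to 1$, which is the claim. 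Apart from the bootstrap in the previous paragraph, the proof is monotonicity bookkeeping that leans on the convex-tail hypothesis at each step.
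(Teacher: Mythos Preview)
Your argument is correct, and it takes a genuinely different route from the paper's. The paper proceeds by defining the auxiliary step function $B(x)=f(f^{-1}(x)+1)-x$, so that the sequence $f(n)$ exactly satisfies the recurrence $f(n+1)=f(n)+B(f(n))$; it then shows $B(x)/A(x)\to 1$ (the upper bound $B\le A$ comes from concavity of $f$, the lower bound $B(x)\ge A(x)(1+A'(x))$ from the same convexity inequality you use in step~4), and finishes by invoking the companion Lemma~\ref{lem:diff_first} to conclude $a_t/f(t)\to 1$. In contrast, you work in the time variable: you pull $a_t$ back to $s_t=f^{-1}(a_t)$, bound the lag $\eta_t=s_t-t$ by $O(-\log f'(t))$ via a telescoping Riemann-sum estimate and a short bootstrap, and then show that a lag of this size is invisible on the scale of $\log f$. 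The paper's proof is shorter and more modular, since the comparison of two recurrences with asymptotically equal step functions is isolated into Lemma~\ref{lem:diff_first}; your proof is self-contained (it does not need that lemma) and yields the quantitative rate $\log(a_t/f(t))=O\!\bigl((-\log f'(t))\,f'(t)/f(t)\bigr)$, which the paper's argument does not directly give.
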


Given these lemmas, we are ready to prove our theorem.
\begin{proof}[Proof of Theorem~\ref{thm:diff}]

Let $(a_t)$ be any sequence in $\mathbb{R}_{>0}$ satisfying:
\begin{align*}
    a_{t+1} = a_t + G_-(-a_t).
\end{align*}
Then by Lemma~\ref{lem:recur_rel_soln_same_as_diff_eq_soln}, the sequence $(a_t)$ is well approximated by $f(t)$, the solution to the corresponding differential equation:
\begin{align*}
    \lim_{t \rightarrow \infty}{\frac{a_t}{f(t)}} = 1.
\end{align*}
Now, conditional on $\theta = +1$, all agents take action $+1$ from some point on with probability $1$. Thus, with probability $1$,
\begin{align*}
    \ell_{t+1}=\ell_t+D_+(\ell_t)
\end{align*}
for all sufficiently large $t$. Further, by Lemma~\ref{lem:D},
\begin{align*}
    \lim_{x\rightarrow \infty}\frac {D_+(x)}{G_-(-x)}=1.
\end{align*}
So by Lemma~\ref{lem:diff_first},
\begin{align*}
    \lim_{t \rightarrow \infty}{\frac{\ell_t}{a_t}} = 1
\end{align*}
with probability $1$. Thus, we have
\begin{align*}
    \lim_{t \rightarrow \infty}{\frac{\ell_t}{f(t)}} = \lim_{t \rightarrow \infty}{\frac{\ell_t}{a_t} \cdot \frac{a_t}{f(t)}} = 1
\end{align*}
with probability $1$.
\end{proof}

\subsection{Proofs of Lemmas~\ref{lem:diff_first} and~\ref{lem:recur_rel_soln_same_as_diff_eq_soln}}

\begin{proof}[Proof of Lemma~\ref{lem:diff_first}]
We prove the claim in two steps. First, we show that for every $\varepsilon>0$ there are infinitely many times $t$ such that 
\begin{align}
    \label{eq:eps-a-b}
    (1-\varepsilon)a_t\leq b_t\leq (1+\varepsilon)a_t.
\end{align}
Second, we show that if~\eqref{eq:eps-a-b} holds for some $t$ large enough, then it holds for all $t'>t$, proving the claim. 

We start with step 1. Assume without loss of generality that
$a_t \leq b_t$ for infinitely many values of $t$. Fix
$\varepsilon>0$. To show that
$(1-\varepsilon)a_t\leq b_t\leq (1+\varepsilon)a_t$ holds for
infinitely many values of $t$, let $x_0 > 1$ be such that for all
$x > x_0$ it holds that $A$ and $B$ are monotone decreasing,
\begin{align*}
    A(x),B(x)<\varepsilon<1
\end{align*}
and
\begin{align}
    \label{eq:A-B}
    (1-\varepsilon/2)A(x) < B(x) < (1+\varepsilon/2)A(x).
\end{align}

Assume that $a_t, b_t > x_0$; this will indeed be the case for $t$
large enough, since $A$ and $B$ are positive and continuous, and so
both $a_t$ and $b_t$ are monotone increasing and tend to infinity.  So
\begin{align*}
    B(b_t) < (1+\varepsilon/2)A(b_t) \leq (1+\varepsilon/2)A(a_t),   
\end{align*}
where the first inequality follows from~\eqref{eq:A-B}, and the second follows from the fact that $A$ is monotone decreasing and $a_t < b_t$. Since $B(b(t)) = b_{t+1}-b(t)$ and $A(a_t) = a_{t+1}-a(t)$ we have shown that 
\begin{align*}
    b_{t+1} - b_t < (1+\varepsilon/2)(a_{t+1} - a_t),
\end{align*}
and so eventually $b_t \leq (1+\varepsilon)a_t$. Also, notice that the first time this obtains, we also have that the left inequality in ~\eqref{eq:eps-a-b} holds at the same moment:

$$
b_t > b_{t-1} > a_{t-1} = a_t - (a_t - a_{t-1}) > a_t - \varepsilon > a_t - \varepsilon a_t= (1-\varepsilon)a_t.
$$
This completes the first step.  Now we go to step 2. Here we show that
if~\eqref{eq:eps-a-b} holds for large enough $t$ then it holds for all
$t'>t$.

Fix $\varepsilon > 0$, and let $x_0$ be defined as above.
Suppose that $(1-\varepsilon)a_t < b_t < (1+\varepsilon)a_t$, with $a_t, b_t > x_0$. Assume without loss of generality that $b_t \geq a_t$. Then our assumptions and ~\eqref{eq:A-B} imply 
\begin{align*}
 b_{t+1} 
 &= b_t+B(b_t)\\
 &<(1+\varepsilon)a_t+(1+\varepsilon)A(b_t).
\end{align*}
Because $a_t\leq b_t$ and $A$ is decreasing we have
\begin{align*}
 b_{t+1}&<(1+\varepsilon)a_t+(1+\varepsilon)A(a_t)\\
 &=(1+\varepsilon)a_{t+1}.   
\end{align*}
For the other direction, note first that
\begin{align*}
    b_{t+1} > b_t \geq a_t,
\end{align*}
by assumption. We can write $a_t = (1-\varepsilon)a_t+\varepsilon a_t$, and since $a_t > x_0 > 1$, $\varepsilon a_t > (1-\varepsilon)\varepsilon$, and so
\begin{align*}
    b_{t+1} >(1-\varepsilon)a_t+(1-\varepsilon)\varepsilon.
\end{align*}
Now, $\varepsilon > A(a_t)$ since $a_t > x_0$, and so
\begin{align*}
    b_{t+1}
    &>(1-\varepsilon)a_t+(1-\varepsilon)A(a_t)\\
    &=(1-\varepsilon)a_{t+1}.
\end{align*}
Thus
\begin{align}
    (1-\varepsilon)a_{t+1} < b_{t+1} < (1+\varepsilon)a_{t+1},
\end{align}
as required.

\end{proof}

\begin{proof}[Proof of Lemma~\ref{lem:recur_rel_soln_same_as_diff_eq_soln}]

We restrict the domain of $f$ to the interval $(t_0,\infty)$ such that for $t>t_0$ it already holds that $f'(t) = A(f(t))$. Since $A$ is continuous, $\lim_{t \to \infty} f(t)=\infty$, and so we can also assume that in the  interval $(f(t_0),\infty)$ it holds that $A$ is convex and differentiable. 

Since $f$ is strictly increasing in $(t_0,\infty)$, it has an inverse $f^{-1}$. For $x$ large enough define $B(x)=f(f^{-1}(x)+1)-x$. 

Now, let $(b_t)$ be any sequence satisfying the recurrence relation
\begin{align*}
    b_{t+1} = b_t + B(b_t).
\end{align*}
In order to apply Lemma~\ref{lem:diff_first}, we will first show that
\begin{align*}
    \lim_{x\rightarrow\infty}\frac{B(x)}{A(x)}=1.
\end{align*}

Let $t = f^{-1}(x)$. Such a $t$ exists and is unique for all
sufficiently large $x$, because $f$ is monotone. Notice that by the
definitions of $B(x)$ and $f'(x)$
\begin{align*}
    B(x)&=f(f^{-1}(x)+1)-x\\
    &=f(f^{-1}(x)+1)-x-f'(f^{-1}(x))+f'(f^{-1}(x))\\
    &=f(t+1)-f(t)-f'(t)+A(f(t)),
\end{align*}
where in the last equality we substitute $t = f^{-1}(x)$.
Because $f'$ is positive and decreasing ($f$ is concave) then  $f(t+1)-f(t)\geq f'(t+1) $, and so 
\begin{align*}
    B(x) &\geq f'(t+1)-f'(t)+A(f(t)).
\end{align*}
By the definition of $f$, $f'(t) = A(f(t))$, and so
\begin{align*}
    B(x) \geq A(f(t+1))-A(f(t))+A(f(t)) =A(f(t+1)).
\end{align*}
Again, due to concavity of $f$ we have $f(t+1)\leq f(t)+f'(t)$ and as $A$ is decreasing and convex we get
\begin{align*}
    B(x) &\geq A(f(t)+f'(t))\\
    &\geq A'(f(t))f'(t)+A(f(t))\\
    &=A'(f(t))A(f(t))+A(f(t)).
\end{align*}
We now substitute back $x=f(t)$:
\begin{align*}
   B(x)&\geq A'(x)A(x) + A(x)\\
    &=A(x)(A'(x) + 1)
\end{align*}
so in particular, since $A'(x) \rightarrow 0$ as $x \rightarrow \infty$,
\begin{align*}
    \liminf_{x \rightarrow \infty}{\frac{B(x)}{A(x)}} \geq 1.
\end{align*}

Now we are going to show that $\limsup_{x\rightarrow \infty}\frac{B(x)}{A(x)}\leq 1$ which will conclude the proof. By the definitions of $f^{-1}(x)$ and $B(x)$ 
\begin{align*}
    B(x) = B(f(t)) = f(t+1) - f(t) = \int_t^{t+1}f'(\zeta)\,\dd \zeta.
\end{align*}
As $f'$ is decreasing it follows that 
\begin{align*}
    B(x)&\leq \int_t^{t+1}f'(t)\,\dd \zeta = f'(t) = A(f(t)) = A(x).
\end{align*}
Therefore, 
\begin{align*}
    \limsup_{x\rightarrow\infty}{\frac{B(x)}{A(x)}}\leq 1.
\end{align*}
Hence, from these two inequalities we get that 
\begin{align*}
    \lim_{x \rightarrow \infty}{\frac{B(x)}{A(x)}=1}.
\end{align*}
Now notice that, by construction, $f(t+1)=f(t)+B(f(t))$.
Thus, by Lemma~\ref{lem:diff_first},
\begin{align*}
    \lim_{n \rightarrow \infty}{\frac{f(t)}{a_t}=1}.
\end{align*}

\end{proof}

\subsection{Monotonicity of solutions to a differential equation}

We now prove a general lemma regarding differential equations of the
form $a'(t) = A(a(t))$. It shows that the solutions to this equation
are monotone in $A$. This is useful for calculating approximate
analytic solutions whenever it is impossible to find analytic exact
solutions, as is the case of Gaussian signals, in which we use this
lemma.
\begin{lem}
\label{lem:ineq_of_aut_dif_eqs}
Let $A, B \colon \mathbb{R}_{>0} \to \mathbb{R}_{>0}$ be continuous, and let $a, b \colon \mathbb{R}_{>0} \to \mathbb{R}_{>0}$ satisfy $a'(t) = A(a(t))$ and $b'(t) = B(b(t))$ for all sufficiently large $t$. 

Suppose that
\begin{align*}
    \liminf_{x \rightarrow \infty}{\frac{A(x)}{B(x)}} > 1.
\end{align*}
Then $a(t) > b(t)$ for all sufficiently large $t$.
\end{lem}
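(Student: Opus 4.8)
The plan is to reduce the comparison of $a$ and $b$ to a comparison of their \emph{hitting times}, exploiting the fact that an autonomous equation $x'=A(x)$ is integrated by separation of variables: the time it takes $a$ to climb from level $y_1$ to level $y$ is exactly $\int_{y_1}^y \dd s/A(s)$.

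First I would record the preliminaries. Since $\liminf_{x\to\infty}A(x)/B(x)>1$, fix $c>1$ and $x_0$ with $A(x)\ge c\,B(x)$ for all $x>x_0$. Because $A,B>0$, the functions $a$ and $b$ are strictly increasing on the range of $t$ where their respective equations hold; moreover each tends to $\infty$ (a finite limit $\ell>0$ would force $a'(t)=A(a(t))\to A(\ell)>0$ by continuity of $A$, contradicting boundedness of $a$). Choose $T$ large enough that both equations hold on $[T,\infty)$, and set $y_1:=\max\{a(T),b(T),x_0\}$. For every $y\ge y_1$ there are unique $t_A(y),t_B(y)\in[T,\infty)$ with $a(t_A(y))=y$ and $b(t_B(y))=y$, and $t_A,t_B$ are increasing. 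Substituting $s=a(t)$, so that $\dd s=A(a(t))\,\dd t$, gives for $y>y_1$
\[ t_A(y)-t_A(y_1)=\int_{y_1}^{y}\frac{\dd s}{A(s)}, \qquad t_B(y)-t_B(y_1)=\int_{y_1}^{y}\frac{\dd s}{B(s)}, \]
so that $t_B(y)-t_A(y)=\bigl(t_B(y_1)-t_A(y_1)\bigr)+\int_{y_1}^{y}\bigl(\tfrac{1}{B(s)}-\tfrac{1}{A(s)}\bigr)\,\dd s$. On $[y_1,\infty)$ we have $\tfrac{1}{B(s)}-\tfrac{1}{A(s)}\ge\tfrac{c-1}{c}\cdot\tfrac{1}{B(s)}$, whence the integral is at least $\tfrac{c-1}{c}\bigl(t_B(y)-t_B(y_1)\bigr)$, which tends to $\infty$ as $y\to\infty$ because $b(t)\to\infty$. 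Therefore $t_B(y)-t_A(y)\to\infty$; in particular $t_A(y)<t_B(y)$ for all sufficiently large $y$.

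Finally I would translate this back into a statement about $a$ and $b$: for $t$ large, set $y=b(t)>y_1$, so $t_B(y)=t$ and hence $t_A(y)<t$. Since $a$ is strictly increasing on $[T,\infty)$ and $t>t_A(y)\ge T$, we get $a(t)>a(t_A(y))=y=b(t)$, which is the desired conclusion. The analytic content here is entirely elementary (the bound on $\int(1/B-1/A)$); the part that requires care — and the only real obstacle — is the bookkeeping that fixes a single threshold beyond which the differential equations hold, the solutions are monotone and diverge, the relevant levels exceed $x_0$, and the hitting times $t_A,t_B$ are simultaneously defined.
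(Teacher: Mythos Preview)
Your proof is correct and follows essentially the same route as the paper's: your hitting times $t_A(y)$ and $t_B(y)$ are precisely the inverse functions $a^{-1}(y)$ and $b^{-1}(y)$ that the paper works with, and both arguments integrate $\dd s/A(s)$ and $\dd s/B(s)$ to show $b^{-1}(y)-a^{-1}(y)\to\infty$, then translate this back to $a(t)>b(t)$. Your bookkeeping (the explicit choice of $T$ and $y_1$, and the argument that $a,b\to\infty$) is a bit more carefully spelled out than in the paper, but the underlying idea is identical.
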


\begin{proof}
Notice that $a(t)$ and $b(t)$ are eventually monotone increasing and tend to infinity as $t$ tends to infinity. Thus for all $x$ greater than some $x_0 > 0$ large enough, $a$ and $b$ have inverses that satisfy the following differential equations:
\begin{align*}
    \frac{\dd}{\dd x}a^{-1}(x) &= \frac{1}{A(x)}\\
    \frac{\dd}{\dd x}b^{-1}(x) &= \frac{1}{B(x)}\cdot
\end{align*}
Since $\liminf_x A(x)/B(x)>1$, we can furthermore choose $x_0$ so that for all $x \geq x_0$, $A(x) > (1 + \varepsilon) B(x)$ for some $\varepsilon > 0$. Thus, for $x > x_0$
\begin{align*}
    a^{-1}(x) &= a^{-1}(x_0) + \int_{x_0}^{x}{\frac{1}{A(x)}\, \dd x}\\
    b^{-1}(x) &= b^{-1}(x_0) + \int_{x_0}^{x}{\frac{1}{B(x)}\, \dd x}
\end{align*}
and so
\begin{align*}
    a^{-1}(x) &< a^{-1}(x_0) +
                \frac{1}{1+\varepsilon}\int_{x_0}^{x}{\frac{1}{B(x)}\,\dd
                x}\\
    &= a^{-1}(x_0) + \frac{1}{1+\varepsilon}(b^{-1}(x) - b^{-1}(x_0))
\end{align*}
and thus
\begin{align*}
    a^{-1}(x) - b^{-1}(x) < -\frac{\varepsilon}{1+\varepsilon}b^{-1}(x) + \left[ a^{-1}(x_0) - \frac{1}{1+\varepsilon}b^{-1}(x_0) \right].
\end{align*}
Since $b^{-1}(x)$ tends to infinity as $x$ tends to infinity, it follows that for all sufficiently large $x$, $a^{-1}(x) < b^{-1}(x)$. Thus, for all sufficiently large $t$
\begin{align*}
    t = a^{-1}(a(t)) < b^{-1}(a(t)),
\end{align*}
and so, since $b(t)$ is monotone increasing,
\begin{align*}
    b(t) < a(t).
\end{align*}
\end{proof}

\subsection{Eventual monotonicity of public belief update}

We end this section with a lemma that shows that under some technical
conditions on the left tail of $G_-$, the function
$u_+(x) = x + D_+(x)$ (i.e., the function that determines how the
public log-likelihood ratio is updated when the action $+1$ is taken)
is eventually monotone increasing.
\begin{lem}
\label{lem:u_eventually_monotonic}
Suppose $G_-$ has a convex and differentiable left tail. Then the map $u_+(x) = x + D_+(x)$ is monotone increasing for all sufficiently large $x$.
\end{lem}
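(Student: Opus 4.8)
The plan is to show that $u_+'(x) \to 1$ as $x \to \infty$; since $u_+(x) = x + D_+(x)$ is then differentiable with eventually positive derivative, it is strictly increasing for all sufficiently large $x$. So the whole task reduces to showing $D_+'(x) \to 0$. First I would record that $D_+$ is differentiable on the relevant range: by hypothesis there is an $x_0$ such that $G_-$ is convex and differentiable on $(-\infty, x_0)$, so its derivative $G_-'$ is nonnegative, nondecreasing and continuous there, and $G_-$ is absolutely continuous on $(-\infty, x_0)$ with density $G_-'$; then by~\eqref{eq:nu}, $G_+(x) = \int_{-\infty}^x \ee^\zeta\,\dd\nu_-(\zeta)$, so $\nu_+$ also has a density on $(-\infty, x_0)$, namely $G_+'(x) = \ee^x G_-'(x)$. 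Hence $D_+(x) = \log(1 - G_+(-x)) - \log(1 - G_-(-x))$ is differentiable for all $x > -x_0$, and differentiating gives
\begin{align*}
  D_+'(x) = \frac{G_+'(-x)}{1 - G_+(-x)} - \frac{G_-'(-x)}{1 - G_-(-x)} = G_-'(-x)\left(\frac{\ee^{-x}}{1 - G_+(-x)} - \frac{1}{1 - G_-(-x)}\right),
\end{align*}
where in the last step I substituted $G_+'(-x) = \ee^{-x}G_-'(-x)$.

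Next I would control the two factors separately. As $x \to \infty$ both $G_+(-x)$ and $G_-(-x)$ tend to $0$, so the parenthesized factor tends to $-1$ and is in particular bounded for large $x$. For the factor $G_-'(-x)$ I would use convexity: on $(-\infty, x_0)$ the density $G_-'$ is nonnegative and nondecreasing, and $\int_{-\infty}^{x_0} G_-'(\zeta)\,\dd\zeta = G_-(x_0) < \infty$, so its limit at $-\infty$ (which equals its infimum) must be $0$; that is, $G_-'(-x) \to 0$ as $x \to \infty$. Combining the two, $D_+'(x) \to 0$, hence $u_+'(x) = 1 + D_+'(x) \to 1 > 0$, and $u_+$ is strictly increasing for all sufficiently large $x$.

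I do not expect a serious obstacle: the argument is essentially a one-line differentiation plus two asymptotic estimates. The only point requiring care is the claim that $G_-'(-x) \to 0$, which is precisely where the convexity hypothesis enters — mere integrability of the left tail would not suffice, since without monotonicity a density can have tall, thin spikes far out in the tail. I would also be careful in extracting the density identity $G_+' = \ee^{\,\cdot}\,G_-'$ on $(-\infty, x_0)$ from~\eqref{eq:nu}, using that a differentiable convex function is $C^1$ and hence that $G_-$ is absolutely continuous on its left tail with derivative $G_-'$.
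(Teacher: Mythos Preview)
Your proposal is correct and follows essentially the same route as the paper: differentiate $u_+$, use the identity $G_+'(-x)=\ee^{-x}G_-'(-x)$ from~\eqref{eq:nu}, observe that the denominators $1-G_\pm(-x)$ tend to $1$, and invoke convexity of the left tail of $G_-$ to conclude $G_-'(-x)\to 0$, hence $u_+'(x)\to 1$. Your write-up is in fact slightly more careful than the paper's in justifying differentiability and the density identity.
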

\begin{proof}
Recall that 
\begin{align*}
    D_+(x)=\log\frac{1-G_+(-x)}{1-G_-(-x)}\cdot
\end{align*}
Since $G_-$ has a differentiable left tail, it has a derivative $g_-(-x)$ for all $x$ large enough. It then follows from \eqref{eq:nu} that $G_+$ also has a derivative in this domain, and
\begin{align*}
    u_+'(x)
    &=1+\frac{g_+(-x)}{1-G_+(-x)}-\frac{g_-(-x)}{1-G_-(-x)}\\
    &=1+\frac{\ee^{-x} g_-(-x)}{1-G_+(-x)}-\frac{g_-(-x)}{1-G_-(-x)}\cdot
\end{align*}
Since $1-G_-(-x)$ and $1-G_+(-x)$ tend to $1$ as $x$ tends to infinity,
\begin{align*}
    \lim_{x \to \infty}u_+'(x) = \lim_{x \to \infty} 1 + \ee^{-x}g_-(-x)-g_-(-x).
\end{align*}
Since $G_-$ is eventually convex, $g_-(-x)$ tends to zero, and therefore
\begin{align*}
    \lim_{x \to \infty}u_+'(x) = 1.
\end{align*}
In particular, $u_+'(x)$ is positive for $x$ large enough, and hence $u_+(x)$ is eventually monotone increasing.
\end{proof}

\section{Gaussian private signals}
\label{app:proofs}
\subsection{Preliminaries}
We say that private signals are Gaussian when $F_-$ is the normal distribution with mean $-1$ and variance $\sigma^2$, and $F_+$ is the normal distribution with mean $+1$ and variance $\sigma^2$. To calculate the evolution of $\ell_t$, we need to calculate $G_+$ and $G_-$, the conditional distributions of the private log-likelihood ratio $L_t$. Notice that in this case
\begin{align*}
  L_t = \log{\frac{\ee^{-(s_t - 1)^2/2\sigma^2}}{\ee^{-(s_t -
  (-1))^2/2\sigma^2}}} = 2s_t/\sigma^2,
\end{align*}
so that $L_t$ is simply proportional to the signal $s_t$. 
It follows that $L_t$ is also normally distributed, conditioned on the state
$\theta$, and that $G_+$ and $G_-$ are cumulative distribution functions of Gaussians, with variance  $4/\sigma^2$.

\subsubsection{Notation}
In this section and those that follow, we denote by $\ell_t^*$ the public log-likelihood ratio when all agents before agent $t$ take the correct action. Formally,
\begin{align*}
    \ell_t^* = \log{\frac{\mathbb{P}(\theta = +1\, |\, a_1 = \cdots = a_{t-1} = +1)}{\mathbb{P}(\theta = -1\, |\, a_1 = \cdots = a_{t-1} = +1)}}\cdot
\end{align*}
For convenience, we will also use the notation $\mathbb{P}_+(\cdot)$ as shorthand for $\mathbb{P}(\cdot \, |\, \theta = +1)$.

\subsection{The evolution of public belief}

\begin{proof}[Proof of Theorem~\ref{l_t_asymptotics}]

Let $f \colon \mathbb{R}_{>0} \rightarrow \mathbb{R}_{>0}$ be any function such that $f'(t) = G_{-}(-f(t))$ for all sufficiently large $t$. Then by Theorem~\ref{thm:diff},
\begin{align*}
    \lim_{t \rightarrow \infty}{\frac{\ell_t}{f(t)}} = 1
\end{align*}
with probability $1$.

Recall from above that $L_t$ is distributed normally, and $G_{-}(-x)$ is the CDF of a normal distribution with variance $\tau^2=4/\sigma^2$.

For $1 > \eta \geq 0$, define
\begin{align*}
    F_{\eta}(x) &= \frac{\ee^{-\frac{1-\eta}{2\tau^2}x^2}}{x}\\
    f_{\eta}(t) &= \frac{\sqrt{2}\tau}{\sqrt{1-\eta}}\sqrt{\log(t)+\log\frac{(1-\eta)^2}{2\tau^2}}\cdot
\end{align*}
By a routine application of L'Hospital's rule, $F_0$ and $F_\eta$ are lower and upper bounds for $G_-$, in the sense that
\begin{align*}
    \lim_{x \rightarrow \infty}{\frac{G_{-}(-x)}{F_{0}(x)}} &= \infty\\
    \lim_{x \rightarrow \infty}{\frac{F_{\eta}(x)}{G_{-}(-x)}} &= \infty \text{, } \eta > 0.
\end{align*}
Since $f_{\eta}'(t) = F_{\eta}(f_{\eta}(t))$ for all sufficiently large $t$, we have by Lemma~\ref{lem:ineq_of_aut_dif_eqs} that for any $\eta > 0$,
\begin{align*}
      f_0(t) < f(t) < f_{\eta}(t)
\end{align*}
for all sufficiently large $t$. So
\begin{align*}
    \liminf_{t \rightarrow \infty}{\frac{f(t)}{\sqrt{2} \tau \sqrt{\log{t}}}} = \liminf_{t \rightarrow \infty}{\frac{f(t)}{f_0(t)}} \geq 1
\end{align*}
and for any $\eta > 0$,
\begin{align*}
    \limsup_{t \rightarrow \infty}{\frac{f(t)}{\sqrt{2} \tau \sqrt{\log{t}}}} = \frac{1}{\sqrt{1 - \eta}} \cdot \limsup_{t \rightarrow \infty}{\frac{f(t)}{f_{\eta}(t)}} \leq \frac{1}{\sqrt{1 - \eta}}\cdot
\end{align*}
Thus,
\begin{align*}
    \lim_{t \rightarrow \infty}{\frac{f(t)}{\sqrt{2} \tau \sqrt{\log{t}}}} = \lim_{t \rightarrow \infty}{\frac{f(t)}{(2\sqrt{2}/\sigma ) \sqrt{\log{t}}}} 
    = 1
\end{align*}
so with probability $1$,
\begin{align*}
    \lim_{t \rightarrow \infty}{\frac{\ell_t}{(2\sqrt{2}/\sigma)  \sqrt{\log{t}}}} = \lim_{t \rightarrow \infty}{\frac{\ell_t}{f(t)} \cdot \frac{f(t)}{(2\sqrt{2}/\sigma)  \sqrt{\log{t}}}} = 1.
\end{align*}

\end{proof}

To prove Theorem~\ref{probability_of_mistake}, we will need two lemmas. The first is general, and will be used several times in the sequel, while the second deals exclusively with the Gaussian case. 

Denote by $E_t$ the event that $a_\tau=+1$ for all $\tau \geq t$; that is, that there are no more mistakes after time $t$. The next lemma provides a uniform bound for the probability of $E_t$, conditioned on the public belief. It implies, in particular, that the probability of $E_1$ is positive, which we will use in the proof of Theorem~\ref{probability_of_mistake}.
\begin{lem}
\label{lem:all-correct}
Suppose $G_-$ and $G_+$ are continuous, and $G_{-}$ has a convex and differentiable left tail. Then for every $L \in \mathbb{R}$, there is some $m_L > 0$ such that for any $t$, $x \geq L$ implies $\mathbb{P}_{+}(E_t \, | \, \ell_t = x) \geq m_L$.
\end{lem}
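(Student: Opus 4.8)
The plan is to bound below the probability that all subsequent agents take action $+1$, uniformly over the starting public belief $\ell_t = x \geq L$. First I would observe that, conditional on $\theta = +1$ and on $\ell_t = x$, the event $E_t$ is exactly the event that the public log-likelihood ratio process $(\ell_\tau)_{\tau \geq t}$ — which updates via $\ell_{\tau+1} = \ell_\tau + D_+(\ell_\tau)$ whenever $a_\tau = +1$ — never has an agent deviate. An agent $\tau$ takes the wrong action $a_\tau = -1$ precisely when $\ell_\tau + L_\tau \leq 0$, i.e. $L_\tau \leq -\ell_\tau$. So, conditional on $\theta = +1$ and on the current value $\ell_\tau = y$, the probability that agent $\tau$ errs is $G_+(-y)$, and this is small when $y$ is large. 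The key structural point is that once agents keep choosing $+1$, the sequence $\ell_\tau$ is nondecreasing (since $D_+(x) > 0$ for $x$ large, by Lemma~\ref{lem:D}, as $G_-(-x) > G_+(-x)$ there — actually $D_+ \geq 0$ wherever $G_+ \leq G_-$, which holds on the relevant tail), so starting from any $x \geq L$ the process stays above $\min\{L, x_1\}$ for some fixed threshold. Thus I want a union bound: conditional on $\theta=+1$ and $\ell_t = x \geq L$,
\begin{align*}
  \mathbb{P}_+(E_t^c \mid \ell_t = x) \leq \sum_{\tau \geq t} \mathbb{P}_+(a_\tau = -1, \ a_{t}=\cdots=a_{\tau-1}=+1 \mid \ell_t = x) \leq \sum_{\tau \geq t} G_+(-\ell_\tau^{(x)}),
\end{align*}
where $\ell_\tau^{(x)}$ denotes the deterministic trajectory obtained by always updating with $D_+$ starting from $x$; since the true $\ell_\tau$ on the event ``no deviation so far'' dominates this trajectory, $G_+(-\ell_\tau) \leq G_+(-\ell_\tau^{(x)})$ by monotonicity of $G_+$.

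The remaining task is to show this sum is bounded away from $1$, uniformly in $x \geq L$. The worst case is $x = L$ (smaller starting point gives a pointwise-smaller trajectory, hence larger error probabilities), so it suffices to handle the single trajectory $\ell_\tau^{(L)}$. By Theorem~\ref{thm:diff} (whose hypotheses — $G_\pm$ continuous, left tail of $G_-$ convex and differentiable — are exactly those assumed here), applied to the deterministic recurrence $\ell_{\tau+1}^{(L)} = \ell_\tau^{(L)} + D_+(\ell_\tau^{(L)})$ together with Lemma~\ref{lem:D}, this trajectory grows like the solution $f$ of the differential equation $f' = G_-(-f)$, which tends to infinity; more crudely, $\ell_\tau^{(L)} \to \infty$ monotonically. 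Then $G_+(-\ell_\tau^{(L)}) \to 0$, and by the ``log-likelihood ratio of the log-likelihood ratio is the log-likelihood ratio'' identity \eqref{eq:nu}, $G_+(-y) = \int_{-\infty}^{-y} e^\zeta \, d\nu_-(\zeta) \leq e^{-y}\nu_-(\mathbb{R})$... wait, that needs $\nu_-$ finite, which it is (it's a probability measure), so $G_+(-y) \leq e^{-y}$. Hence $\sum_\tau G_+(-\ell_\tau^{(L)}) \leq \sum_\tau e^{-\ell_\tau^{(L)}}$, and since $\ell_\tau^{(L)}$ grows at least like a constant times $\sqrt{\log \tau}$ in the Gaussian case — or, in general, grows to infinity, so eventually $\ell_\tau^{(L)} \geq 2\log \tau$ once it is large enough relative to the (sublinear but unbounded) growth... hmm, here I need a genuinely summable bound, not just $\to \infty$.

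This last point — getting $\sum_\tau e^{-\ell_\tau^{(L)}} < \infty$ — is the main obstacle, because $\ell_\tau^{(L)}$ may grow very slowly (e.g. like $\log\log \tau$ if $G_-$ has extremely thin tails), in which case $e^{-\ell_\tau^{(L)}}$ is not summable. The fix is to not sum $G_+$ but rather to exploit that we only need $\mathbb{P}_+(E_t^c \mid \ell_t = x)$ bounded away from $1$, and to split the sum: for the first $N$ terms use that each error probability is at most $G_+(-L)$, which is a fixed number $<1$, no — that gives $N \cdot G_+(-L)$, too big. The correct argument: choose $N$ so large that $\ell_N^{(L)} \geq M$ for a threshold $M$ with $\sum_{\tau \geq N} G_+(-\ell_\tau^{(\ell_N^{(L)})}) \leq 1/2$; since from time $N$ on the trajectory is above $M$ and keeps increasing, and error probabilities past a high enough threshold do sum to something small — this requires $\sum G_+(-\ell_\tau)$ to converge, which by comparison with the differential equation means $\int G_+(-f(t))\,dt < \infty$, i.e. $\int e^{-f(t)}\,dt<\infty$; since $f' = G_-(-f) \asymp e^{-f}$ (wait, no) — in fact $\int G_-(-f(t))\, dt = \int f'(t)\,dt = f(\infty) = \infty$, which is the \emph{wrong} direction, so convergence genuinely fails in general and the union bound over \emph{all} $\tau \geq t$ is too lossy. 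The resolution I would actually pursue: prove it by a renewal/coupling argument instead — show there is a fixed $p > 0$ such that, from any state $\ell_\tau = y \geq L$, with probability at least $p$ the \emph{next} agent is correct \emph{and} moves the belief up past the next integer gridpoint, and more carefully that the probability of \emph{ever} erring, started from $y$, satisfies a contraction $q(y) \leq G_+(-y) + (1 - G_+(-y)) q(y + D_+(y))$ with $q$ bounded; since $\prod_\tau (1 - G_+(-\ell_\tau^{(L)})) > 0$ iff $\sum_\tau G_+(-\ell_\tau^{(L)}) < \infty$, we are back to the same convergence question — so the honest answer is that summability of $\sum G_+(-\ell_\tau)$ \emph{must} hold under the hypotheses, and I expect the paper proves exactly this: that along the trajectory, $G_+(-\ell_\tau^{(L)}) = G_+(-\ell_\tau^{(L)})$ is summable because $D_+(\ell_\tau) = \ell_{\tau+1} - \ell_\tau$ and $G_+(-\ell_\tau) \approx e^{-\ell_\tau} \cdot(\text{something}) = e^{-\ell_\tau}$ while $D_+(\ell_\tau) \approx G_-(-\ell_\tau) \gg G_+(-\ell_\tau) = e^{-\ell_\tau}$... and $\sum_\tau (\ell_{\tau+1}-\ell_\tau) e^{\ell_\tau}/G_-(-\ell_\tau)$ telescopes-ish. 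Concretely: $\sum_\tau G_+(-\ell_\tau^{(L)}) \leq \sum_\tau e^{-\ell_\tau} = \sum_\tau \frac{e^{-\ell_\tau}}{G_-(-\ell_\tau)} G_-(-\ell_\tau) = \sum_\tau \frac{e^{-\ell_\tau}}{G_-(-\ell_\tau)}(\ell_{\tau+1}-\ell_\tau)(1+o(1)) \approx \int^\infty \frac{e^{-y}}{G_-(-y)}\,dy$, and by convexity of the left tail of $G_-$ (which forces $G_-(-y)$ to decay at most exponentially, hence $e^{-y}/G_-(-y)$ to be integrable — this is where the convexity hypothesis earns its keep), the integral converges. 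So the plan's crux is: (i) union bound reduces to the single worst trajectory from $L$; (ii) identity \eqref{eq:nu} gives $G_+(-y) \leq e^{-y}$; (iii) rewrite $\sum e^{-\ell_\tau^{(L)}}$ as a Riemann sum for $\int e^{-y}/G_-(-y)\,dy$ using $\ell_{\tau+1}-\ell_\tau = D_+(\ell_\tau) \sim G_-(-\ell_\tau)$; (iv) convexity of the left tail of $G_-$ $\Rightarrow$ this integral is finite $\Rightarrow$ the tail sum past any large $M$ is $< 1$, uniformly, giving $m_L > 0$. The delicate step is (iii)–(iv): controlling the Riemann-sum error and invoking convexity correctly to bound $e^{-y}/G_-(-y)$, so that is where I would be most careful.
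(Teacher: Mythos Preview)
Your approach is genuinely different from the paper's, and step (iv) of your plan has a real error. The claim that ``convexity of the left tail of $G_-$ forces $G_-(-y)$ to decay at most exponentially'' is false: the Gaussian CDF has a convex left tail but $G_-(-y)$ decays like $e^{-cy^2}$. Consequently your integral $\int^\infty e^{-y}/G_-(-y)\,dy$ can diverge (and does in the Gaussian case), so the bound you assemble in (ii)--(iv) gives nothing. The mistake is applying $G_+(-y)\le e^{-y}$ \emph{before} converting the sum to an integral. If instead you convert first, you get $\sum_\tau G_+(-\ell_\tau^{(L)}) \approx \int^\infty G_+(-y)/G_-(-y)\,dy$, and by the same identity~\eqref{eq:nu} the \emph{ratio} satisfies $G_+(-y)/G_-(-y)\le e^{-y}$ (it is the conditional mean of $e^\zeta$ over $\zeta\le -y$), so the integral is finite regardless of how thin the tail of $G_-$ is. With that correction your product/union-bound route can be completed, though you still need to handle the reduction to the ``worst case'' $x=L$ carefully, since $u_+(x)=x+D_+(x)$ is only \emph{eventually} monotone (Lemma~\ref{lem:u_eventually_monotonic}).

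The paper bypasses all of this with a martingale argument: $\mu_t$ is a bounded martingale, so by optional stopping $\mathbb{P}(\inf_{\tau>t}\mu_\tau\le 1/2\mid \mu_t=q)\le 2(1-q)$. Since an action $a_\tau=-1$ forces $\ell_{\tau+1}\le 0$ (because $D_-(x)=\log\frac{G_+(-x)}{G_-(-x)}\le -x$ by~\eqref{eq:nu}), the event $E_t^c$ coincides with $\{\mu_\tau\le 1/2\text{ for some }\tau>t\}$ when $\mu_t>1/2$, giving $\mathbb{P}_+(E_t\mid \ell_t=x)\ge \varepsilon>0$ uniformly for $x\ge 1$; the range $x\in[L,1]$ is then handled by a short compactness/continuity step. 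This is considerably cleaner: no summation, no Riemann-sum control, and the regularity hypotheses on $G_-$ are barely used.
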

\begin{proof}

Recall the definition of the public belief $\mu_t = \mathbb{P}(\theta = +1|a_1,\ldots,a_{t-1})$. The process $(\mu_1,\mu_2,\ldots)$ is a bounded martingale, and therefore, by a standard argument on bounded martingales, if we condition on $\mu_t = q$, then the probability that $\mu_\tau \leq 1/2$ for some $\tau > t$ is at most $2(1-q)$.\footnote{Intuitively, if I assign high belief now to the event $\theta=+1$, then the probability that I assign this event low belief in the future must be small.} This event is precisely the complement of $E_t$, and therefore we have that $\mathbb{P}(E_t\,|\,\mu_t = q)$ is at least $2q-1$. Hence, conditioning on $\theta=+1$, we have that $\mathbb{P}_+(E_t\,|\,\mu_t = 1-q) \geq (2q-1)/q$, which is positive for all $q > 1/2$. 

Since $\mu_t=q$ is equivalent to $\ell_t = \log q/(1-q)$, what we have shown implies that there is an $\varepsilon>0$ such that for all $x \geq 1$ (here the choice of 1 is arbitrary and can be replaced with any positive number) 
\begin{align*}
    \mathbb{P}_+(E_t\,|\, \ell_t=x) > \varepsilon.
\end{align*}

Now, for any $L < 1$, the compactness of the interval $[L,1]$, together with the continuity of $G_-$ and $G_+$, implies that  there is an $n_L$ such that if $\ell_t \geq L$, and if agents $t$ through $t+n_L-1$ take action $+1$, then $\ell_{t+n_L} > 1$. Further, since the probability of agents $t$ through $t + n_L - 1$ all taking action $+1$ conditional on $\ell_t = x$ is continuous in $x$, there is a $p_L > 0$ such that
\begin{align*}
    \mathbb{P}_+(E_t\,|\,\ell_t = x) \geq p_L \cdot \varepsilon
\end{align*}
since with probability at least $p_L$ there are no mistakes up to time $t+n_L$, and thence there are no mistakes with probability at least $\varepsilon$.

\end{proof}

\begin{lem}
\label{lem:mistake_given_no_previous_mistakes}
Assume private signals are Gaussian. For every $\varepsilon > 0$ there exists some $k > 0$ such that for all $t$,
\begin{align*}
    \mathbb{P}_+(a_t = -1\, |\, a_\tau = +1 \text{ for all } \tau < t) > \frac{k}{t^{1 + \varepsilon}}\cdot
\end{align*}
\end{lem}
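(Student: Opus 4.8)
I want to lower-bound the probability that agent $t$ makes the first mistake, given that all predecessors acted correctly. Conditioning on $\theta=+1$ and on $a_\tau=+1$ for all $\tau<t$, the public log-likelihood ratio equals $\ell_t^*$, the value obtained after $t-1$ correct actions. Agent $t$ errs (takes $a_t=-1$) precisely when $\ell_t^* + L_t \le 0$, i.e.\ when $L_t \le -\ell_t^*$; conditioned on $\theta=+1$, this has probability $G_+(-\ell_t^*)$. So the plan is: (i) pin down the growth rate of $\ell_t^*$, (ii) translate a lower bound on $\ell_t^*$ into a lower bound on $G_+(-\ell_t^*)$ via Gaussian tail asymptotics, and (iii) check the resulting bound is of the form $k/t^{1+\varepsilon}$.

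**Step (i): the size of $\ell_t^*$.** The sequence $(\ell_t^*)$ satisfies the deterministic recurrence $\ell_{t+1}^* = \ell_t^* + D_+(\ell_t^*)$, which is exactly the recurrence studied in Theorems~\ref{thm:diff} and~\ref{l_t_asymptotics}. Since $D_+(x)$ is asymptotic to $G_-(-x)$ by Lemma~\ref{lem:D}, and by the argument proving Theorem~\ref{l_t_asymptotics} (via Lemmas~\ref{lem:diff_first}, \ref{lem:recur_rel_soln_same_as_diff_eq_soln}, \ref{lem:ineq_of_aut_dif_eqs}), the solution to $f'=G_-(-f)$ is squeezed between multiples of $\sqrt{\log t}$. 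Concretely, for every $\delta>0$ we get $\ell_t^* \le (1+\delta)\,(2\sqrt{2}/\sigma)\sqrt{\log t}$ for all $t$ large enough. (I only need the upper bound on $\ell_t^*$ here, which is the "easy" direction coming from $F_0$ being a lower bound for $G_-$; note $\ell_t^*$ is the deterministic solution so there is no "with probability $1$" caveat — it holds for all large $t$ outright.)

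**Step (ii)--(iii): from $\ell_t^*$ to the probability bound.** With $G_+$ the CDF of a centered Gaussian of variance $\tau^2 = 4/\sigma^2$, the standard tail bound gives, for large $y$, $G_+(-y) = \mathbb{P}(\mathcal{N}(0,\tau^2) \le -y) \ge c\, \ee^{-y^2/\tau^2}/y$ for a suitable constant $c>0$ (e.g.\ from the Mills-ratio inequality $\Phi(-z)\ge \frac{z}{z^2+1}\frac{1}{\sqrt{2\pi}}\ee^{-z^2/2}$). Plugging in $y = \ell_t^* \le (1+\delta)(2\sqrt 2/\sigma)\sqrt{\log t}$ and using that $y\mapsto \ee^{-y^2/\tau^2}/y$ is decreasing,
\begin{align*}
  G_+(-\ell_t^*) \;\ge\; c\,\frac{\ee^{-(\ell_t^*)^2/\tau^2}}{\ell_t^*} \;\ge\; \frac{c'}{\sqrt{\log t}}\cdot \ee^{-(1+\delta)^2 (8/\sigma^2)(\log t)/\tau^2} \;=\; \frac{c'}{\sqrt{\log t}}\cdot t^{-2(1+\delta)^2},
\end{align*}
since $(8/\sigma^2)/\tau^2 = 2$. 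Now given $\varepsilon>0$ I choose $\delta$ small enough that $2(1+\delta)^2 < 1+\varepsilon/2$; then $t^{-2(1+\delta)^2}/\sqrt{\log t} \ge t^{-(1+\varepsilon)}$ for all $t$ large enough, say $t\ge t_1$. Finally, to cover the finitely many $t < t_1$, note each of them has positive probability of being the first mistake (the event $\{a_\tau=+1,\ \tau<t\}\cap\{a_t=-1\}$ has positive probability since signals are unbounded), so we can shrink the constant $k$ to absorb them and get the bound $\mathbb{P}_+(a_t=-1 \mid a_\tau=+1\ \forall \tau<t)\ge k/t^{1+\varepsilon}$ uniformly in $t$.

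**Main obstacle.** The only real work is Step (i): making rigorous that the deterministic sequence $\ell_t^*$ grows no faster than $(1+\delta)(2\sqrt2/\sigma)\sqrt{\log t}$. This is not immediate from Theorem~\ref{l_t_asymptotics} as stated (that theorem is about the random $\ell_t$ and uses $\theta=+1$ conditioning), so I need to extract the purely deterministic comparison — applying Lemma~\ref{lem:ineq_of_aut_dif_eqs} to compare the recurrence for $\ell_t^*$ with the differential equation $f_\eta' = F_\eta(f_\eta)$ whose explicit solution is $\asymp \sqrt{\log t}$, and tracking that $D_+$ is asymptotically below $(1+\eta')G_-(-\,\cdot\,)$ which is itself asymptotically below $F_\eta$. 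Everything else — the Gaussian Mills-ratio estimate, the monotonicity juggling, and absorbing small $t$ into the constant — is routine.
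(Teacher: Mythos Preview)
Your approach is exactly the paper's: write the conditional probability as $G_+(-\ell_t^*)$, bound $\ell_t^*$ above by $(1+\delta)(2\sqrt{2}/\sigma)\sqrt{\log t}$ via Theorem~\ref{l_t_asymptotics} (and yes, the deterministic version follows from the same lemmas, as you correctly note), and then apply a Gaussian tail lower bound. But the proof as written does not close, because of an arithmetic slip in that tail bound.

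A minor point first: $G_+$ is not centered. Since $L_t = 2s_t/\sigma^2$ with $s_t \sim N(+1,\sigma^2)$ under $\theta=+1$, the distribution $G_+$ has mean $2/\sigma^2$. This by itself is harmless asymptotically, since the mean shift contributes only a factor of order $\ee^{-c\sqrt{\log t}}$, which is subpolynomial in $t$.

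The real problem is the exponent. Your own citation of the Mills inequality, $\Phi(-z) \ge \tfrac{z}{z^2+1}\tfrac{1}{\sqrt{2\pi}}\ee^{-z^2/2}$, applied with $z=y/\tau$, yields $\mathbb{P}(\mathcal{N}(0,\tau^2)\le -y) \ge c\,\ee^{-y^2/(2\tau^2)}/y$, not $c\,\ee^{-y^2/\tau^2}/y$. With the missing factor of $2$ restored and $\tau^2 = 4/\sigma^2$, the exponent is $y^2\sigma^2/8$ (matching the paper's bound $G_+(-x) > \ee^{-(\sigma^2/8)x^2}/x$), and plugging in $y = (1+\delta)(2\sqrt{2}/\sigma)\sqrt{\log t}$ gives $t^{-(1+\delta)^2}$. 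Your version produces $t^{-2(1+\delta)^2}$, after which you ask for $\delta$ with $2(1+\delta)^2 < 1+\varepsilon/2$; this is impossible for any $\varepsilon < 2$ since $2(1+\delta)^2 \ge 2$. Restore the factor of $2$ and the argument goes through: one chooses $\delta$ with $(1+\delta)^2 < 1+\varepsilon$, exactly as the paper does with its $\beta < \sqrt{1+\varepsilon}-1$.
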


\begin{proof}
By the definitions of $\ell_t^*$ and $G_{+}$,
\begin{align*}
    \mathbb{P}_{+}(a_t = -1 \, | \, a_\tau = +1 \text{ for all } \tau < t) &= \mathbb{P}_{+}(a_t = -1 \,|\, \ell_t = \ell_t^*)\\
    &= G_{+}(-\ell_t^*).
\end{align*}
Now, by Theorem~\ref{l_t_asymptotics}, for every $\beta > 0$, $\ell_t^* < (1+\beta)\frac{2\sqrt{2}}{\sigma}\sqrt{\log{t}}$ for all sufficiently large $t$. Further, it follows from a routine application of L'Hopital's rule (or from the standard asymptotic expansion for the CDF of a normal distribution) that for all sufficiently large $x$,
\begin{align*}
    G_{+}(-x) > \frac{\ee^{-(\sigma^2/8)x^2}}{x}\cdot
\end{align*}
Let $\varepsilon > 0$, and take $\beta < \sqrt{1 + \varepsilon} - 1$. Then by monotonicity of $G_{+}(-x)$ and a straightforward calculation,
\begin{align*}
    G_{+}(-\ell_t^*) &> G_{+}(-(1+\beta)\frac{2\sqrt{2}}{\sigma}\sqrt{\log{t}})\\
    &> \left[ \frac{1}{(1+\beta)\frac{2\sqrt{2}}{\sigma}} \right] \cdot \frac{t^{(1 + \varepsilon) - (1 + \beta)^2}}{\sqrt{\log{t}}} \cdot \frac{1}{t^{1 + \varepsilon}}\\
    &> \frac{1}{t^{1 + \varepsilon}}
\end{align*}
for all sufficiently large $t$. From this, the claim follows immediately.

\end{proof}

\begin{proof}[Proof of Theorem~\ref{probability_of_mistake}]
Denote by $C_t$ be the event that $a_\tau = +1$ for all $\tau < t$, and note that the event $T_1=t$ is simply the intersection of $C_t$ with the event that $a_t=-1$. 

Let $\varepsilon > 0$. 
By Lemma~\ref{lem:mistake_given_no_previous_mistakes} there is some $k' > 0$ such that for all $t$,
\begin{align*}
    \mathbb{P}_+(a_t = -1\, |\, C_t) > \frac{k'}{t^{1 + \varepsilon}}\cdot
\end{align*}
Now, put $\gamma = \mathbb{P}_+(a_\tau = +1\text{ for all } \tau \geq 1)$, the probability that all agents take the correct action. By Lemma~\ref{lem:all-correct}, $\gamma > 0$, so this provides a lower bound on the probability of the first $t-1$ agents taking the correct action. Formally,
\begin{align*}
    \mathbb{P}_+(C_t) \geq \mathbb{P}_+(a_\tau = +1\, \text{ for all } \tau \geq 1) = \gamma.
\end{align*}
Thus,
\begin{align*}
    \mathbb{P}_+(T_1 = t) &= \mathbb{P}_+(a_t = -1,C_t)\\
    &= \mathbb{P}_+(a_t = -1\, |\, C_t) \cdot \mathbb{P}_+(C_t)\\
    &\geq \frac{\gamma k'}{t^{1 + \varepsilon}}
\end{align*}
for all $t$.

\end{proof}

\section{Upsets and runs}

We recall a few definitions from Section~\ref{subsec:expected_time_to_learn}. 
We say that there is an {\em upset}  at time $t$ if $a_{t-1} \neq a_t$. We denote by $\Xi$ the random variable which assigns to each outcome the total number of upsets, and by $\Xi_t$ the total number of upsets at times up to and including $t$.
We say that there is a {\em run} of length $m$ from $t$ if $a_t = a_{t+1} = \cdots = a_{t+m-1}$. Note that this definition does not preclude a run from being part of a longer run; we will refer to a run of finite length which is not strictly contained in any other run as {\em maximal}. We say that a run from $t$ is {\em good} if $a_t = +1$ and {\em bad} otherwise.

Notice that the number of maximal runs is exactly equal to the number of upsets. We use this observation now to show that the probability of having many maximal runs is very small, so that most of the probability is concentrated in the outcomes with few maximal runs.

\begin{proof}[Proof of Proposition~\ref{prop:exponential_decay_of_runs}]

Denote by $\Upsilon$ the random variable which assigns to each outcome the number of finite maximal good runs it contains; note that with probability $1$, $\Upsilon$ is finite.

By Lemma~\ref{lem:all-correct}, there is a $\beta > 0$ such that for any $x \geq 0$, if $\ell_t = x$, then the probability that all agents from $t$ on take the correct action is at least $\beta$. Formally,\footnote{We remind the reader that $\mathbb{P}_+(\cdot)$ is shorthand for $\mathbb{P}(\cdot\,|\,\theta = +1)$.}
\begin{align*}
    \mathbb{P}_+(a_\tau = +1\text{ for all } \tau \geq t\, |\, \ell_t = x) \geq \beta.
\end{align*}
Thus, whenever $a_{t-1} = -1$ and $a_t = +1$ (or $t = 1$), the probability that there is exactly one more maximal good run is at most $1 - \beta$. It follows that for $n \geq 0$,
\begin{align*}
    \mathbb{P}_+(\Upsilon = n + 1) \leq (1-\beta) \mathbb{P}_+(\Upsilon = n)
\end{align*}
and thus, for any $n \geq 0$,
\begin{align*}
    \mathbb{P}_+(\Upsilon = n) \leq (1-\beta)^n \mathbb{P}_+(\Upsilon = 0)
\end{align*}
and so
\begin{align*}
    \mathbb{P}_+(\Upsilon \geq n) &\leq \frac{\mathbb{P}_+(\Upsilon = 0)}{\beta} \cdot (1-\beta)^{n}.
\end{align*}
Finally, since $\Upsilon = \lfloor \Xi / 2 \rfloor$, we have for any $n$:
\begin{align*}
    \mathbb{P}_+(\Xi \geq n) &\leq \mathbb{P}_+(\Upsilon \geq \lfloor n / 2 \rfloor) \leq c \cdot \gamma^n
\end{align*}
where $c = \mathbb{P}_+(\Upsilon = 0)/\beta$ and $\gamma = (1-\beta)^{\frac{1}{3}}$.

\end{proof}

Whenever asymptotic learning occurs (that is, whenever the probability that all agents take the correct action from some point on is equal to $1$), the total number of upsets is almost surely finite. In particular, the probability that $\Xi_t$ is logarithmic in $t$ tends to zero as $t$ tends to infinity.
Using Proposition~\ref{prop:exponential_decay_of_runs}, we can show that in fact this probability tends to $0$ quickly:
\begin{corollary}
\label{corollary:upsets exponential}
Let $c, \gamma$ be as in Proposition~\ref{prop:exponential_decay_of_runs}. Then
\begin{align*}
    \mathbb{P}(\Xi_t \geq -\frac{2.1}{\log{\gamma}}\log{t}) \leq c \cdot \frac{1}{t^{2.1}}\cdot
\end{align*}
\end{corollary}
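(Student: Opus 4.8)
The plan is to obtain this immediately from Proposition~\ref{prop:exponential_decay_of_runs} by a change of variables. The key structural observation is that $\Xi_t \leq \Xi$ holds deterministically: the number of upsets occurring at times up to and including $t$ cannot exceed the total number of upsets over all time. Consequently $\mathbb{P}(\Xi_t \geq n) \leq \mathbb{P}(\Xi \geq n)$ for every threshold $n$, so it suffices to bound the tail of $\Xi$ itself.

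Next I would invoke Proposition~\ref{prop:exponential_decay_of_runs}, which supplies $c > 0$ and $0 < \gamma < 1$ with $\mathbb{P}(\Xi \geq n) \leq c\gamma^n$ for all $n > 0$. I will apply this with a value of $n$ that is a priori not an integer, which is harmless: since $\Xi$ is integer-valued and $\gamma < 1$,
\begin{align*}
  \mathbb{P}(\Xi \geq n) = \mathbb{P}(\Xi \geq \lceil n \rceil) \leq c\gamma^{\lceil n \rceil} \leq c\gamma^{n}.
\end{align*}
Then I set $n = -\frac{2.1}{\log \gamma}\log t$. Because $0 < \gamma < 1$ we have $\log\gamma < 0$, so this $n$ is a positive real number (for $t > 1$), and
\begin{align*}
  \gamma^{n} = \exp(n\log\gamma) = \exp(-2.1\log t) = t^{-2.1}.
\end{align*}
Combining the three displays yields $\mathbb{P}(\Xi_t \geq n) \leq c\gamma^{n} = c\,t^{-2.1}$, which is exactly the claimed bound.

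There is essentially no obstacle in this argument; the only points that require a moment's care are the sign of $\log\gamma$ (which is what makes the stated threshold $-\tfrac{2.1}{\log\gamma}\log t$ a positive quantity, so that Proposition~\ref{prop:exponential_decay_of_runs} applies) and the fact that this threshold need not be an integer, both of which are dispatched as above. The case $t = 1$ is degenerate (the threshold is $0$ and $\Xi_1 = 0$) and need not be treated separately.
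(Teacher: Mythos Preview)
Your proposal is correct and follows exactly the paper's approach: bound $\Xi_t$ by $\Xi$, apply Proposition~\ref{prop:exponential_decay_of_runs} with $n = -\tfrac{2.1}{\log\gamma}\log t$, and simplify $\gamma^n = t^{-2.1}$. Your extra remarks on the sign of $\log\gamma$ and on non-integer $n$ are more careful than the paper, which simply writes the three-line chain of inequalities without comment.
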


\begin{proof}
\begin{align*}
    \mathbb{P}(\Xi_t \geq -\frac{2.1}{\log{\gamma}}\log{t})&\leq \mathbb{P}(\Xi \geq -\frac{2.1}{\log{\gamma}}\log{t})\\
    &\leq c \cdot\gamma^{-\frac{2.1}{\log{\gamma}}\log{t}}\\
    &=c \cdot \frac{1}{t^{2.1}}\cdot
\end{align*}
\end{proof}

In fact, it is equally easy to show the same statement for exponents larger than $2.1$, but this will suffice for our purposes.

One important consequence of Corollary~\ref{corollary:upsets exponential} is that with high probability, there is at least one maximal run before time $t$ which is long relative to $t$. Thus, much of the dynamics is controlled by what happens during long runs.

We previously analyzed only long runs that start at time $1$, when the public log-likelihood ratio is equal to $0$. If a long run starts at some  public belief $\ell_t\neq 0$ then its evolution is different from the former case. However, if the run is long enough then the analysis above can still be applied. The following lemma states that if a run starts at some $\ell_t>0$ then we can bound the future public belief from below using $\ell^*$.

\begin{lem}
\label{lem:quickly_same_as_l_star}
Suppose that $G_{-}$ has a convex and differentiable left tail. Then there exists a $z > 0$ such that, if there is a good run of length $s$ from $t$, then $\ell_{t+s} \geq \ell_{s-z}^*$.
\end{lem}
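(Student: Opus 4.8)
The plan is to reduce the claim to a comparison between two orbits of the single deterministic map $u_+(x) = x + D_+(x)$. By~\eqref{eq:ell_t}, the finite sequence $\ell_t, \ell_{t+1}, \dots, \ell_{t+s}$ generated during the good run satisfies $\ell_{t+j+1} = u_+(\ell_{t+j})$, while the sequence $(\ell_m^*)$ is, by construction, exactly the orbit of $u_+$ started from $\ell_1^* = 0$. If $u_+$ were globally monotone increasing, one could compare the two orbits step by step; the obstacle is that the run may begin at a very negative value $\ell_t$, which lies outside the region where $u_+$ is known (by Lemma~\ref{lem:u_eventually_monotonic}) to be monotone. I would therefore split the argument into a short \emph{transient} phase, in which the run's orbit is driven into the monotone region within a number of steps bounded uniformly in $t$ and in $\ell_t$, followed by a \emph{comparison} phase.

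For the transient phase I would first record two elementary facts, both consequences of the identity~\eqref{eq:nu}. First, $1 - G_+(-x) = \int_{-x}^\infty \ee^\zeta \,\dd\nu_-(\zeta) \geq \ee^{-x}\bigl(1 - G_-(-x)\bigr)$, so $D_+(x) \geq -x$ for every $x$, and hence $u_+(x) \geq 0$ for every $x$; in particular $\ell_{t+1} \geq 0$. Second, $G_-(y) - G_+(y) = \int_{-\infty}^y (1 - \ee^\zeta)\,\dd\nu_-(\zeta) > 0$ for every $y \leq 0$ (the integrand is positive on $(-\infty,0)$, and $\nu_-$ charges every left half-line since the signals are unbounded), so $D_+(x) > 0$ for every $x \geq 0$; hence $u_+(x) > x$ on $[0,\infty)$, the orbit $(\ell_m^*)$ is strictly increasing, and, having no fixed point in $[0,\infty)$ to converge to, it diverges to $+\infty$. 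Now let $x_0 \geq 0$ be a threshold beyond which $u_+$ is monotone increasing, let $z_0 \geq 1$ be the first index with $x_1 := \ell_{z_0}^* \geq x_0$, and let $\delta_1 := \inf_{[0,x_1]} D_+ > 0$ (the infimum is positive because $D_+$ is continuous and strictly positive on the compact interval $[0,x_1]$). Starting from $\ell_{t+1} \geq 0$, the run's orbit then gains at least $\delta_1$ at each step until it leaves $[0,x_1]$, and once it exceeds $x_1$ it remains above (as $u_+(x) > x$ on $[0,\infty)$). Hence, with $N_0 := \lfloor x_1 / \delta_1 \rfloor + 1$ --- a constant depending only on the signal distribution --- we have $\ell_{t+1+N_0} \geq x_1 = \ell_{z_0}^*$ whenever $s \geq 1 + N_0$.

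For the comparison phase, both $\ell_{t+1+N_0}$ and $\ell_{z_0}^*$ lie in $[x_0,\infty)$, on which $u_+$ is monotone increasing and which it maps into itself; so by induction on $j$, $\ell_{t+1+N_0+j} \geq \ell_{z_0+j}^*$ for every $j \geq 0$. Taking $s = 1 + N_0 + j$ and using $z_0 \geq 1$ together with the monotonicity of $(\ell_m^*)$, this gives $\ell_{t+s} \geq \ell_{z_0 + s - 1 - N_0}^* \geq \ell_{s - z}^*$ with $z := 1 + N_0$. Finally, for $1 \leq s < 1 + N_0 = z$ the index $s - z$ is nonpositive, so $\ell_{s-z}^* \leq 0 \leq \ell_{t+s}$ (using $u_+ \geq 0$) and the bound holds trivially. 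I expect the main obstacle to be precisely the transient analysis --- establishing that the orbit reaches $[x_0,\infty)$ within a bounded number of steps regardless of how negative the starting value $\ell_t$ is; the two bounds $D_+(x) \geq -x$ (which disposes of the first step) and $\inf_{[0,x_1]} D_+ > 0$ (which controls the remaining climb) are exactly what make this uniformity work.
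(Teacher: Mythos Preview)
Your proposal is correct and follows essentially the same two-phase approach as the paper's proof: a transient phase in which positivity and continuity of $D_+$ on a compact interval bound the number of steps needed to reach the region where $u_+$ is monotone, followed by an inductive comparison of the two $u_+$-orbits on that region. Your treatment is in one respect more careful than the paper's: the paper takes $\mu = \inf_{[0,\ell_z^*]} D_+$ without explicitly addressing how the orbit enters $[0,\infty)$ when $\ell_t$ is very negative, whereas your observation $D_+(x) \geq -x$ (hence $u_+(x) \geq 0$), derived directly from~\eqref{eq:nu}, disposes of an arbitrarily negative starting value in a single step.
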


\begin{proof}

Let $u_{+}(x) = x + D_+(x)$. Then by \eqref{eq:ell_t}, whenever agent $t$ takes action $+1$, $\ell_{t+1} = u_{+}(\ell_t)$.

Since $G_{-}$ is eventually convex and differentiable, $u_{+}(x)$ is monotone increasing for sufficiently large $x$, by Lemma~\ref{lem:u_eventually_monotonic}. Take 
$$z = \min{\{t \in \mathbb{N} \colon u_{+}(x) \text{ is monotone on } (\ell_{t}^*-1, \infty)\}}.$$ 
Now, let $\mu = \inf_{x \in [0, \ell_{z}^*]}{D_+(x)}$. By continuity of $D_+(x)$ and compactness of $[0, \ell_{z}^*]$, $\mu > 0$, since $D_{+}(x) > 0$ for all $x$. Put $N = \lceil \frac{\ell_{z}^*}{\mu}\rceil$. Then for all $x \in [0, \ell_{z}^*]$, $u_{+}^{N}(x) \geq \mu \cdot N \geq \ell_{z}^*$. Further, since $u_{+}(x) > x$ for all $x$, it follows that whenever there is a run of length $N$ from $t$, $\ell_{t+N} > \ell_{z}^*$.

This implies that if there is a good run from $t$ of length $s\geq N$, then $\ell_{t+s}\geq \ell_{s-z}^*$.

\end{proof}

\section{Distributions with polynomial tails}

In this appendix we prove Theorem~\ref{thm:polytail-finite-expectation}, showing that for private log-likelihood distributions with polynomial tails, the expected time to learn is finite.

As in the setting of Theorem~\ref{thm:polytail-finite-expectation},
assume that the conditional distributions of the private
log-likelihood ratio satisfy
\begin{align}
\label{eq:poly_G_def_plus}
    G_+(x) &= 1 - \frac{c}{x^k} \text{ for all } x > x_0\\
\label{eq:poly_G_def_minus}
    G_-(x) &= \frac{c}{(-x)^{k}} \text{ for all } x < -x_0
\end{align}
for some $x_0 > 0$.

We remind the reader that we denote by $\ell_t^*$ the log-likelihood ratio of the public belief that results when the first $t-1$ agents take action $+1$. It follows from Theorem~\ref{thm:diff} that in this setting, $\ell_t^*$ behaves asymptotically as $t^{1/(k+1)}$. Notice also that, by the symmetry of the model, the log-likelihood ratio of the public belief that results when the first $t-1$ agents take action $-1$ is $-\ell_t^*$.

We begin with the simple observation that a strong enough bound on the probability of mistake is sufficient to show that the expected time to learn is finite. Formally, we have the following lemma. We remind the reader that $\mathbb{P}_+(\cdot)$ is shorthand for $\mathbb{P}(\cdot\,|\,\theta = +1)$.
\begin{lem}
\label{lem:mistake-bounds-time-to-learn}
Suppose there exist $k$, $\varepsilon > 0$ such that for all $t \geq 1$, $\mathbb{P}_{+}(a_t = -1) < k \cdot \frac{1}{t^{2+\varepsilon}}$. Then $\mathbb{E}_+(T_L)$ is finite.
\end{lem}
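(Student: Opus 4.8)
The plan is to control the tail of $T_L$ by a union bound over the ``future mistakes'' and then sum. Condition on $\theta=+1$ throughout. Recalling $T_L = \min\{t : a_\tau = \theta \text{ for all } \tau \geq t\}$, we have the clean identity
\begin{align*}
  \{T_L > t\} = \bigcup_{\tau \geq t}\{a_\tau = -1\},
\end{align*}
since $T_L > t$ says exactly that the correct action fails to be taken at some time $\tau \geq t$. By subadditivity and the hypothesis $\mathbb{P}_+(a_\tau = -1) < k\,\tau^{-(2+\varepsilon)}$,
\begin{align*}
  \mathbb{P}_+(T_L > t) \;\leq\; \sum_{\tau \geq t}\mathbb{P}_+(a_\tau = -1) \;\leq\; k\sum_{\tau \geq t}\frac{1}{\tau^{2+\varepsilon}}.
\end{align*}

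The next step is the elementary estimate $\sum_{\tau \geq t}\tau^{-(2+\varepsilon)} = O(t^{-(1+\varepsilon)})$ (compare the sum with $\int_{t-1}^{\infty}x^{-(2+\varepsilon)}\,\dd x$), which furnishes a constant $k' > 0$ with $\mathbb{P}_+(T_L > t) \leq k'\,t^{-(1+\varepsilon)}$ for all $t \geq 2$, while for $t \in \{0,1\}$ the probability is trivially at most $1$. Finally I would invoke the tail-sum formula for a nonnegative integer-valued random variable:
\begin{align*}
  \mathbb{E}_+(T_L) = \sum_{t=0}^{\infty}\mathbb{P}_+(T_L > t) \;\leq\; 2 + k'\sum_{t=2}^{\infty}\frac{1}{t^{1+\varepsilon}} \;<\; \infty,
\end{align*}
the last series converging because $\varepsilon > 0$. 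This finishes the proof.

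There is no substantial obstacle; the lemma is a Borel--Cantelli-type computation. The one genuine point worth flagging is why the hypothesis is stated with exponent $2+\varepsilon$ rather than $1+\varepsilon$: the first summation (over $\tau \geq t$) consumes one power of $t$, leaving exponent $1+\varepsilon$, and the second summation (over $t \geq 0$) converges only because that remaining exponent strictly exceeds $1$. With merely $\mathbb{P}_+(a_t = -1) = O(t^{-(1+\varepsilon)})$ the same scheme would give $\mathbb{P}_+(T_L > t) = O(t^{-\varepsilon})$, which need not be summable, so the full strength of the assumption is used essentially.
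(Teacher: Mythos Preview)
Your proof is correct. The paper takes a slightly different (and marginally shorter) route: rather than bounding the tail $\mathbb{P}_+(T_L > t)$ by a union over $\{a_\tau = -1\}_{\tau \geq t}$ and then applying the tail-sum formula, it observes directly that $\{T_L = t\} \subseteq \{a_{t-1} = -1\}$ (if learning starts at time $t$, the last wrong action must occur at $t-1$) and computes
\[
\mathbb{E}_+(T_L) = \sum_{t \geq 1} t\,\mathbb{P}_+(T_L = t) \leq 1 + \sum_{t \geq 2} t\,\mathbb{P}_+(a_{t-1} = -1) \leq 1 + k\sum_{t \geq 2}\frac{t}{(t-1)^{2+\varepsilon}} < \infty.
\]
This sidesteps your double summation and the intermediate estimate $\mathbb{P}_+(T_L > t) = O(t^{-(1+\varepsilon)})$, but the two arguments are equally elementary and consume the hypothesis in the same way: your closing remark about why exponent $1+\varepsilon$ would not suffice applies verbatim to the paper's computation as well (the extra factor of $t$ in $\sum t\,\mathbb{P}_+(a_{t-1}=-1)$ plays the same role as your first summation over $\tau \geq t$).
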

\begin{proof}
Since $T_L = t$ only if $a_{t-1} = -1$, $\mathbb{P}_{+}(T_L = t) \leq \mathbb{P}_{+}(a_{t-1} = -1)$. Thus
\begin{align*}
    \mathbb{E}_+(T_L) &= \sum_{t=1}^{\infty} t\cdot\mathbb{P}_{+}(T_L = t)\\
    &\leq \mathbb{P}_+(T_L = 1) + \sum_{t=2}^{\infty} t\cdot\mathbb{P}_{+}(a_{t-1}=-1)\\
    &\leq 1 + k \sum_{i=2}^{\infty} \frac{t}{(t-1)^{2 + \varepsilon}}\\
    &< \infty.
\end{align*}
\end{proof}
Accordingly, this section will be primarily devoted to studying the
rate of decay of the probability of mistake,
$\mathbb{P}_+(a_{t} = -1)$.  In order to bound this probability, we
will need to make use of the following lemmas, which give some control
over how the public belief is updated following an upset.

\begin{lem}
\label{lem:dont go below too far}
For $G_+$ and $G_-$ as in \eqref{eq:poly_G_def_plus} and \eqref{eq:poly_G_def_minus}, $|\ell_{t+1}|\leq|\ell_t|$ whenever $|\ell_t|$ is sufficiently large and $a_t \neq a_{t+1}$.
\end{lem}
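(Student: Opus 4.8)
The plan is to extract the asymptotics of $D_\pm$ from the explicit polynomial tails \eqref{eq:poly_G_def_plus}--\eqref{eq:poly_G_def_minus} together with the identity \eqref{eq:nu}, and then to check that the claim reduces to a single two‑sided estimate on $D_-$. Recall from \eqref{eq:ell_t} that $\ell_{t+1}=\ell_t+D_{a_t}(\ell_t)$, so $\ell_{t+1}$ is determined by $\ell_t$ and $a_t$. The content of the lemma is that when an agent takes the action \emph{against} an already‑large public belief — which is exactly the configuration described at an upset — the update flips $\ell$ past $0$ but only by a bounded amount. First I would record two crude one‑sided bounds valid for \emph{any} signal distribution: by \eqref{eq:nu}, $G_+(y)=\int_{-\infty}^y\ee^\zeta\,\dd\nu_-(\zeta)\le\ee^y G_-(y)$, whence $D_-(x)\le -x$ for all $x$; dually $1-G_+(-x)=\int_{-x}^\infty\ee^\zeta\,\dd\nu_-\ge\ee^{-x}\big(1-G_-(-x)\big)$, whence $D_+(x)\ge -x$. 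Consequently a $-1$ action sends $\ell$ to a value $\le 0$ and a $+1$ action to a value $\ge 0$; in particular, if $|\ell_t|$ is large then the preceding action $a_{t-1}$ must have the sign of $\ell_t$ (otherwise $\ell_t$ would lie on the wrong side of $0$), so at an upset the action taken is indeed against $\ell_t$. Thus it suffices to treat $\ell_t=x>0$ large with $a_t=-1$ (the case $\ell_t<0$ with $a_t=+1$ being symmetric, handled at the end) and to show $|\ell_{t+1}|=|x+D_-(x)|\le x$, which is equivalent to
\begin{align*}
  -2x\ \le\ D_-(x)\ \le\ 0 .
\end{align*}

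The upper bound $D_-(x)\le 0$ is immediate, since $L_t$ is stochastically larger conditioned on $\theta=+1$ than on $\theta=-1$ (equivalently $G_+\le G_-$, a consequence of \eqref{eq:nu}), so $G_+(-x)\le G_-(-x)$. The whole weight of the lemma is therefore in the lower bound $D_-(x)\ge -2x$, equivalently, in showing that $G_+(-x)\ge \ee^{-2x}\,G_-(-x)$ for all large $x$. Here I would use \eqref{eq:nu} once more and keep only the mass of $\nu_-$ on the unit interval $(-x-1,-x)$:
\begin{align*}
  G_+(-x)=\int_{-\infty}^{-x}\ee^{\zeta}\,\dd\nu_-(\zeta)\ \ge\ \ee^{-x-1}\big(G_-(-x)-G_-(-x-1)\big).
\end{align*}
By \eqref{eq:poly_G_def_minus}, for $x$ large $G_-(-x)-G_-(-x-1)=c\big(x^{-k}-(x+1)^{-k}\big)$, which by the mean value theorem is at least $c' x^{-k-1}$ for a suitable $c'>0$, while $G_-(-x)=c x^{-k}$. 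Dividing and taking logarithms gives $D_-(x)\ge -x-\log x-C$ for all sufficiently large $x$ and a constant $C$, and in particular $D_-(x)\ge -2x$ eventually. The mirror case $\ell_t<0$, $a_t=+1$ is entirely analogous, using $1-G_+$, $1-G_-$ and the right tail \eqref{eq:poly_G_def_plus} in place of $G_+$, $G_-$ and \eqref{eq:poly_G_def_minus}. Combining the two bounds finishes the proof.

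I expect the lower bound $D_-(x)\ge -2x$ to be the only real obstacle: one must rule out that the conditional probability $G_+(-x)$ of an extremely adverse private signal is \emph{super‑exponentially} (in $x$) smaller than $G_-(-x)$. The polynomial left tail is precisely what makes this work, since then $G_-(-x-1)/G_-(-x)=\big(x/(x+1)\big)^k$ differs from $1$ only polynomially in $1/x$, which dominates the $\ee^{-x}$ loss incurred in the displayed bound; for a thin tail (e.g.\ Gaussian) this step fails, consistently with the time to learn being infinite there. The remaining ingredients — the sign bounds $D_-(x)\le -x\le D_+(x)$ and $G_+\le G_-$ — are routine consequences of the identity \eqref{eq:nu}.
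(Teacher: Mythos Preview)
Your argument is correct and follows the same core strategy as the paper: reduce to the inequality $D_-(x)\ge -cx$ for some $c<2$ and all large $x$, which is precisely what is needed for $|\ell_{t+1}|\le|\ell_t|$ when $\ell_t=x>0$ and the action taken is $-1$. Where you differ is only in execution. You bound $G_+(-x)$ from below by the contribution of $\nu_-$ on the interval $(-x-1,-x)$ and then apply the mean value theorem to the explicit polynomial tail, obtaining $D_-(x)\ge -x-\log x-C$; the paper instead integrates the density $g_+(-\zeta)=ck\,\ee^{-\zeta}\zeta^{-k-1}$ directly and crudely replaces $\zeta^{-k-1}$ by $\ee^{-0.1\zeta}$ to get $D_-(x)\ge -1.2x$. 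Your route is slightly cleaner (it avoids differentiating $G_-$ and is asymptotically sharper), but both reach the same conclusion. You are also more explicit than the paper about the reduction step: the general sign inequalities $D_-(x)\le -x$ and $D_+(x)\ge -x$ pin down which action is being taken relative to the sign of $\ell_t$ at an upset, and in fact clarify an indexing slip in the paper's own proof (it writes $a_t=+1$, $a_{t+1}=-1$ but then applies $D_-$ in the update from $\ell_t$ to $\ell_{t+1}$).
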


\begin{proof}

Assume without loss of generality that $a_t = +1$ and $a_{t+1}=-1$, so that
\begin{align*}
    \ell_{t+1}=\ell_t+D_-(\ell_t).
\end{align*}
Thus, to prove the claim we compute a bound for $D_-$. To do so we first obtain a bound for the left tail of $G_+$. By assumption, for $x > x_0$ (with $x_0$ as in \eqref{eq:poly_G_def_plus} and \eqref{eq:poly_G_def_minus}),
\begin{align*}
    g_-(-x) = G_-'(-x) = \frac{ck}{x^{k+1}}
\end{align*}
and so by \eqref{eq:nu},
\begin{align*}
    g_+(-x) = \ee^{-x} g_-(-x) = ck\frac{\ee^{-x}}{x^{k+1}}.
\end{align*}
Hence, 
\begin{align*}
    G_+(-x) = \int_{-\infty}^{-x}{g_+(\zeta) \,\dd\zeta} 
    = \int_{-\infty}^{-x}{ck\frac{\ee^\zeta}{(-\zeta)^{k+1}} \,\dd\zeta}
    = ck \int_{x}^{\infty}{\zeta^{-k-1}\ee^{-\zeta}\,\dd\zeta}.
\end{align*}
For $\zeta$ sufficiently large, $\zeta^{-k-1}$ is at least, say, $\ee^{-.1\zeta}$. Thus, for $x$ sufficiently large,
\begin{align*}
    G_+(-x) \geq ck\int_{x}^{\infty}{\ee^{-1.1\zeta}\,\dd\zeta} = \frac{ck}{1.1} \ee^{-1.1x}.
\end{align*}
It follows that for $x$ sufficiently large,
\begin{align*}
    D_{-}(x) = \log{\frac{G_+(-x)}{G_-(-x)}} \geq \log{\frac{ck}{1.1}} - 1.1x + k\log{x} \geq -1.2x.
\end{align*}
Thus, for $\ell_t$ sufficiently large,
\begin{align*}
    \ell_{t+1} = \ell_t + D_-(\ell_t) = \ell_t + \log\frac{G_+(-\ell_t)}{G_-(-\ell_t)} \geq \ell_t + 1.2(-\ell_t) = -.2\ell_t
\end{align*}
so in particular, $|\ell_{t+1}| < |\ell_t|$.

\end{proof}


We will make use of the following lemma, which bounds the range of possible values that $\ell_t$ can take.

\begin{lem}
\label{lem:ratio to ell star bounded}
For $G_{+}$ and $G_{-}$ as in \eqref{eq:poly_G_def_plus} and \eqref{eq:poly_G_def_minus}, there exists an $M > 0$ such that for all $t \geq 0$, $|\ell_s| \leq M \cdot \ell_{t}^*$ for all $s \leq t$.
\end{lem}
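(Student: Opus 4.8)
The plan is to control $|\ell_s|$ for $s \le t$ by separately handling the regime of large public belief and the regime of bounded public belief, and gluing these together with the bound on the increments that follows from Lemma~\ref{lem:D}. The key point is that $\ell_t^*$ is increasing in $t$ (the public belief after a run of correct actions only grows), so it suffices to bound $|\ell_s|$ by a constant multiple of $\ell_s^*$, uniformly over all $s$. The worst case is when the public belief is large but of the wrong sign, or when it overshoots; these are exactly the situations ruled out by Lemma~\ref{lem:dont go below too far} and by the sub-linear growth of the increments.

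Concretely, I would first fix a threshold $x_1$ large enough that (i) Lemma~\ref{lem:dont go below too far} applies whenever $|\ell_s| \ge x_1$, so that any upset occurring at such a value of $|\ell_s|$ strictly decreases $|\ell_s|$ (in fact by the computation there, $|\ell_{s+1}| \le 0.2 |\ell_s|$), and (ii) the increments $|D_+(x)|, |D_-(x)|$ are bounded by $1$ for $|x| \ge x_1$ (using Lemma~\ref{lem:D} and the fact that the tails of $G_\pm$ vanish). With such an $x_1$, I claim that once $|\ell_s|$ exceeds $x_1$ it can only increase by taking a correct action (which moves $\ell_s$ in the direction of $\ell_s^*$, never past it) or decrease after an upset; either way $|\ell_{s}|$ never exceeds $\max\{x_1 + 1,\ \sup_{r \le s} \ell_r^*\} = \max\{x_1+1, \ell_s^*\}$, since on a good run the public LLR started from a value $\le \ell_r^*$ stays $\le \ell_{r'}^*$ for $r' \ge r$ by monotonicity of $u_+$ (Lemma~\ref{lem:u_eventually_monotonic}) and of $\ell^*$. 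Thus for $|\ell_s| \ge x_1$ we get $|\ell_s| \le \ell_s^* + 1 \le 2 \ell_s^*$ for $s$ large, and hence $|\ell_s| \le M_0 \cdot \ell_t^*$ for a suitable $M_0$ and all $s \le t$ with $t$ at least some $t_0$.

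It then remains to absorb the finitely many small-$s$ terms and the case $|\ell_s| < x_1$: for $s < t_0$ there are only finitely many possible histories, so $|\ell_s|$ is bounded by an absolute constant, and since $\ell_t^* \to \infty$, this constant is $\le M_1 \ell_t^*$ for all $t$ large; for $t$ itself small there are again only finitely many cases, handled by enlarging $M$. Taking $M = \max\{M_0, M_1\}$ together with the adjustment for small $t$ gives the claim.

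The main obstacle I anticipate is making precise the assertion that a good run starting from $\ell_r \le \ell_r^*$ stays below $\ell_{r'}^*$: this needs the monotonicity of the update map $u_+$ from Lemma~\ref{lem:u_eventually_monotonic} (valid only for large arguments), so one has to argue that starting from any value $\le x_1$ the process first climbs into the region where $u_+$ is monotone without overshooting $\ell^*$ by more than a bounded amount — essentially the argument already used in Lemma~\ref{lem:quickly_same_as_l_star}. Once the monotone-comparison step is set up carefully, the rest is bookkeeping with constants, and the finitely-many-small-cases argument is routine.
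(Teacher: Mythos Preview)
Your proposal has a genuine gap in the treatment of ``bad'' runs. The lemma is a deterministic bound over \emph{all} action sequences (the paper's proof explicitly takes $M_\tau=\max_{\text{outcomes}}|\ell_\tau|/\ell_\tau^*$), so speaking of a ``correct action'' is already ambiguous. More substantively, your dichotomy ``$|\ell_s|$ increases via a correct action (towards $\ell_s^*$) or decreases after an upset'' omits the case $\ell_s<-x_1$ with $a_s=-1$: here $|\ell_{s+1}|=|\ell_s|+|D_-(\ell_s)|>|\ell_s|$, yet $\ell_{s+1}$ moves \emph{away} from the positive benchmark $\ell_{s}^*$, and the monotone comparison with $u_+$ is irrelevant. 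To rescue your approach you would need a separate argument for such runs --- e.g.\ compare with the all-$(-1)$ benchmark $-u_-^{s-1}(0)$ and then show, using the symmetric polynomial tails \eqref{eq:poly_G_def_plus}--\eqref{eq:poly_G_def_minus} and Lemma~\ref{lem:D}, that this is asymptotically a constant times $\ell_s^*$. This is doable but is an entire additional step you have not mentioned.

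The paper's route is different and avoids this asymmetry. It sets $M_\tau=\max_{\text{outcomes}}|\ell_\tau|/\ell_\tau^*$ and shows $M_{\tau+1}\le M_\tau$ for all large $\tau$, hence $\sup_\tau M_\tau<\infty$. The upset case is handled, as you do, via Lemma~\ref{lem:dont go below too far}. The continuation case (action agreeing with the sign of $\ell_\tau$) is handled not through monotonicity of $u_+$ but through a \emph{relative increment} comparison: if $|\ell_\tau|>\ell_\tau^*$ and both exceed $x_0$, then because $|D_+|$ is decreasing on $(x_0,\infty)$ and $|D_-|$ on $(-\infty,-x_0)$, one has $|\ell_{\tau+1}-\ell_\tau|/|\ell_\tau|\le (\ell_{\tau+1}^*-\ell_\tau^*)/\ell_\tau^*$, which immediately gives $|\ell_{\tau+1}|/\ell_{\tau+1}^*\le |\ell_\tau|/\ell_\tau^*$. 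This single inequality treats positive and negative $\ell_\tau$ on the same footing, so no separate bad-run analysis (and no worry about the non-monotone region of $u_+$) is needed.
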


\begin{proof}

For each $\tau \geq 0$, define 
\begin{align*}
    M_\tau = \max{\frac{|\ell_\tau|}{\ell_\tau^*}}
\end{align*}
where the maximum is taken over all outcomes. Note that there are at most $2^\tau$ possible values for this expression, so $M_\tau$ is well-defined and finite. Put
\begin{align*}
    M = \sup_{\tau \geq 0}{M_\tau}.
\end{align*}
To establish the claim, we must show that $M$ is finite. To do this, it suffices to show that for $\tau$ sufficiently large, $M_{\tau + 1} \leq M_\tau$.

Now, let $u_{+}(x) = x + D_+(x)$ and $u_{-}(x) = x + D_-(x)$. Then as shown in the section about the model, whenever agent $\tau$ takes action $+1$, $\ell_{\tau+1} = u_{+}(\ell_\tau)$, and whenever agent $\tau$ takes action $-1$, $\ell_{\tau+1} = u_{-}(\ell_\tau)$.

By Lemma~\ref{lem:u_eventually_monotonic}, $u_+$ and $u_-$ are eventually monotonic. Thus, there exists $x_0 > 0$ such that $u_{+}$ is monotone increasing on $(x_0, \infty)$ and $u_{-}$ is monotone decreasing on $(-\infty, -x_0)$. 

For $\tau$ sufficiently large, $\ell_{\tau}^* > x_0$. Further, it follows from Lemma~\ref{lem:dont go below too far} that for $\tau$ sufficiently large, $|\ell_{\tau+1}| < |\ell_{\tau}|$ whenever $a_\tau \neq a_{\tau+1}$ and $|\ell_\tau| > |\ell_\tau^*|$. Let $(a_\tau)$ be any sequence of actions with $\frac{|\ell_{\tau + 1}|}{\ell_{\tau + 1}^*} = M_{\tau + 1}$. If $a_\tau \neq a_{\tau + 1}$ 
\begin{align*}
    M_{\tau + 1} &= \frac{|\ell_{\tau+1}|}{\ell_{\tau + 1}^*}
    \leq \frac{|\ell_{\tau}|}{\ell_{\tau}^*}
    \leq M_\tau.
\end{align*}
If $a_\tau = a_{\tau + 1}$, then either $M_{\tau + 1} = 1$, in which case $M_{\tau + 1} \leq M_{\tau}$, or $M_{\tau + 1} > 1$. If $M_{\tau + 1} > 1$, then since $|D_{+}|$ and $|D_{-}|$ are decreasing on $(x_0, \infty)$ and $(-\infty, -x_0)$ respectively, $|\ell_{\tau+1} - \ell_{\tau}|/|\ell_{\tau}| \leq |\ell_{\tau+1}^* - \ell_{\tau}^*|/|\ell_{\tau}^*|$. So
\begin{align*}
    M_{\tau + 1} &= \frac{|\ell_{\tau+1}|}{\ell_{\tau + 1}^*} = \frac{|\ell_{\tau}| + |\ell_{\tau+1} - \ell_{\tau}|}{\ell_{\tau}^* + |\ell_{\tau+1}^* - \ell_{\tau}^*|}
\end{align*}
where the second equality follows from the fact that $\ell_{\tau}$ and $\ell_{\tau + 1}$ have the same sign. Finally, 
\begin{align*}
    M_{\tau + 1} &= \frac{|\ell_{\tau}|}{\ell_{\tau}^*} \cdot \frac{1 + |\ell_{\tau+1} - \ell_{\tau}|/|\ell_{\tau}|}{1 + |\ell_{\tau+1}^* - \ell_{\tau}^*|/\ell_{\tau}^*}
    \leq \frac{|\ell_{\tau}|}{\ell_{\tau}^*}
    \leq M_{\tau}.
\end{align*}

Thus, for all sufficiently large $\tau$, $M_{\tau + 1} \leq M_\tau$.

\end{proof}

\begin{proposition}
\label{proposition:polynom_probability}
There exists $\kappa > 0$ such that $\mathbb{P}_{+}(a_t = -1) < \kappa t^{-2.1}$ for all $t > 0$.
\end{proposition}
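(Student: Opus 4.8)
The plan is to trace $\{a_t=-1\}$ back to its only two plausible causes --- an atypically large number of upsets, or an atypically long run of wrong actions --- and bound each by $O(t^{-2.1})$; the point is that $a_t=-1$ is likely only when the public belief $\ell_t$ is small, and $\ell_t$ is small only if there has recently been an upset or the current bad run is long. First, since $\mathbb{P}_+(a_t=-1)=\mathbb{E}_+[G_+(-\ell_t)]$ and the left tail of $G_+$ is exponentially thin --- the integration-by-parts estimate from the proof of Lemma~\ref{lem:dont go below too far} gives $G_+(-x)\le ck\,x^{-k-1}\ee^{-x}$ for large $x$ --- the choice $h_t=4\log t$ yields
\begin{align*}
  \mathbb{P}_+(a_t=-1)\le \mathbb{P}_+(\ell_t<h_t)+t^{-4},
\end{align*}
so it suffices to bound $\mathbb{P}_+(\ell_t<h_t)$. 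Moreover, since $\ell_m^*\asymp m^{1/(k+1)}$ (Theorem~\ref{thm:diff}) and $D_+\ge 0$ (from \eqref{eq:nu}), Lemma~\ref{lem:quickly_same_as_l_star} supplies a polylogarithmic $J_t$ such that any stretch of $J_t$ consecutive correct actions inside $[1,t]$ forces $\ell$ to exceed $h_t$ at some time $\le t$.

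I would then split $\{\ell_t<h_t\}$ into four cases. (a) $\Xi_t> R_t:=-\tfrac{2.1}{\log\gamma}\log t$, which has probability at most $c\,t^{-2.1}$ by Corollary~\ref{corollary:upsets exponential}. (b) Some agent $\tau\le t$ chose $-1$ while $\ell_\tau\ge h_t$; since $\mathbb{P}_+(a_\tau=-1\mid\ell_\tau=x)=G_+(-x)\le t^{-4}$ for $x\ge h_t$, a union bound over $\tau$ bounds this by $t^{-3}$ (this case subsumes both ``$\ell$ climbed above $h_t$ and then dropped back below it'' and ``a bad run began from a large positive $\ell$''). (c) $\Xi_t\le R_t$, no event as in (b) occurred, but some maximal bad run inside $[1,t]$ has length exceeding a suitable polylogarithmic $B_t$. (d) the complement of (a)--(c) intersected with $\{\ell_t<h_t\}$; here, combining not-(b) with $\ell_t<h_t$ shows $\ell_\tau<h_t$ for every $\tau\le t$, so by the choice of $J_t$ every maximal good run has length $<J_t$, while by not-(c) every maximal bad run has length $\le B_t$ and by not-(a) there are at most $R_t+1$ maximal runs in $[1,t]$, giving $t\le (R_t+1)\max(J_t,B_t)$, which is impossible for large $t$ --- so case (d) is empty. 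Cases (a)--(c) then sum to $O(t^{-2.1})$, and enlarging $\kappa$ to absorb the finitely many small $t$ finishes the proof.

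The substantive step is (c): showing that, on $\{\Xi_t\le R_t\}$ and off event (b), all maximal bad runs are short with probability $1-O(t^{-2.1})$. Enumerate the maximal bad runs by start times $\tau_1<\tau_2<\cdots$ and lengths $b_1,b_2,\dots$, and let $i$ be least with $b_i>B_t$. The key claim is that $\ell_{\tau_i}$ is at worst polylogarithmically negative: the first bad run begins from $\ell\ge 0$ (everything before it is correct); a short bad run started from a value $\ge -P_t$ with $P_t$ polylogarithmic first crashes $\ell$ down only to magnitude $\lesssim\log h_t$ (by the explicit forms \eqref{eq:poly_G_def_plus}--\eqref{eq:poly_G_def_minus} and \eqref{eq:nu}) and thereafter drifts at rate $|D_-(\ell)|\asymp|\ell|^{-k}$, staying polylogarithmic; and the ensuing good run only increases $\ell$ (as $D_+\ge 0$), with Lemmas~\ref{lem:dont go below too far} and~\ref{lem:ratio to ell star bounded} available to control the updates at the upsets and the overall range of $\ell$. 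Conditioning on $\mathcal{F}_{\tau_i-1}$ and using that $\ell$ evolves deterministically during a bad run, the probability that the $i$-th bad run lasts $B_t$ steps equals $\prod_j G_+(-\phi_j)=\prod_j\bigl(1-c\,|\phi_j|^{-k}\bigr)$ with every $|\phi_j|$ polylogarithmic, hence is at most $\exp\bigl(-c\,B_t/\mathrm{polylog}(t)\bigr)$; taking $B_t$ a large enough power of $\log t$ and summing over $i\le R_t$ bounds case (c) by $t^{-2.1}$.

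I expect the uniform bad-run tail bound in case (c) to be the main obstacle. A crude estimate fails because Lemma~\ref{lem:ratio to ell star bounded} only forces $|\ell_{\tau_i}|\le M\ell_t^*\asymp t^{1/(k+1)}$, and a bad run beginning near $\ell\approx -t^{1/(k+1)}$ has downward drift $|D_-(\ell)|\asymp|\ell|^{-k}$ so slow that it typically lasts $\asymp t^{k/(k+1)}$ steps --- far longer than the polylogarithmic $B_t$ the argument needs. Ruling this out is precisely what forces the inductive bookkeeping along the sequence of runs described above, and the delicate part is to fix a coherent hierarchy of polylogarithmic thresholds $J_t\le B_t$ (together with the bound $P_t$ on how negative $\ell$ can get, and the constant in $h_t$) so that the recursion is self-consistent and each union bound and the closing pigeonhole cost no more than the $t^{-2.1}$ budget --- all while correctly treating the transient first few steps of the bad-run recursion.
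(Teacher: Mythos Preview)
Your decomposition is workable, but it is more intricate than the paper's, and the intricacy is self-inflicted: you insist on a polylogarithmic bad-run threshold $B_t$, which forces the inductive bookkeeping you correctly identify as the main obstacle. The paper sidesteps that obstacle entirely by choosing the threshold to be $t/(\beta\log t)$ rather than polylogarithmic.

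Concretely, the paper splits $\{a_t=-1\}\cap\{\Xi_t\le\beta\log t\}$ according to whether a majority of agents in $[1,t]$ chose $+1$ (event $B_1$) or $-1$ (event $B_2$); either way, pigeonhole produces a run of length at least $t/(\beta\log t)$. In $B_1$ that long run is good, so Lemma~\ref{lem:quickly_same_as_l_star} pushes $\ell$ above $\ell^*_{t/(\beta\log t)-z}$ at the run's end, and the probability of the subsequent $-1$ is at most $G_+(-\ell^*_{t/(\beta\log t)-z})\le\exp\bigl(-t^{1/(k+3)}\bigr)$; a union bound over the $\le t^2$ possible run endpoints finishes. In $B_2$ the long run is bad, and here the crude bound you dismissed is exactly what is used: $|\ell_r|\le M\ell_t^*$ from Lemma~\ref{lem:ratio to ell star bounded} gives each step of the bad run probability $\le G_+(M\ell_t^*)=1-c(M\ell_t^*)^{-k}$, so a run of length $t/(\beta\log t)$ has probability $\le\exp\bigl(-c'\,t^{1/(2k+1)}/\log t\bigr)$. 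Your observation that a bad run starting near $-M\ell_t^*$ ``typically lasts $\asymp t^{k/(k+1)}$ steps'' is true but irrelevant once the threshold is $t/(\beta\log t)\gg t^{k/(k+1)}$.

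So the paper's argument needs no threshold $h_t$, no $J_t$, no recursion through successive runs, and no hierarchy of polylog constants---just the three-way split $A\cup B_1\cup B_2$ and the two off-the-shelf lemmas. What your route would buy, if completed, is a sharper statement (bad runs are polylogarithmically short with high probability), but for the proposition as stated the paper's coarser split is both sufficient and much cleaner.
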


\begin{proof}
Let $\beta = -2.1/\log{\gamma}$, where $\gamma$ is as in Proposition~\ref{prop:exponential_decay_of_runs}. To carry out our analysis, we will divide the event that $a_t = -1$ into three disjoint events and bound each of them separately:
\begin{align*}
    A &= (a_t = -1) \text{ and } (\Xi_t > \beta\log{t})\\
    B_1 &= (a_t = -1) \text{ and } (\Xi_t \leq \beta\log{t}) \text{ and } (|\{s \colon s < t, a_s = +1\}| \geq \frac{1}{2}t)\\
    B_2 &= (a_t = -1) \text{ and } (\Xi_t \leq \beta\log{t}) \text{ and } (|\{s \colon s < t, a_s = +1\}| < \frac{1}{2}t).
\end{align*}

First, by Corollary~\ref{corollary:upsets exponential} we have a bound for $\mathbb{P}_+(A)$
\begin{align*}
    \mathbb{P}_+(A)\leq c\cdot\frac{1}{t^{2.1}}.
\end{align*}

Next, we bound $\mathbb{P}_{+}(B_1)$. This is the event that the number of upsets so far is small and the majority of agents so far have taken the correct action.

Since there are at most $\beta\log{t}$ upsets, there are at most $\frac{1}{2}\beta\log{t}$ maximal good runs. Since, furthermore, there are at least $\frac{1}{2}t$ agents who take action $+1$, there is at least one maximal good run of length at least $t /(\beta \log{t})$.

Thus, $\mathbb{P}_+(B_1)$ is bounded from above by the probability that there are some $s_1 < s_2 < t$ such that there is a good run of length $s_2-s_1 \geq t/(\beta\log{t})$ from $s_1$ and $a_{s_2} = -1$.

For fixed $s_1$, $s_2$, denote by $E_{s_1, s_2}$ the event that there is a good run of length $s_2 - s_1$ from $s_1$. Denote by $\Gamma_{s_1, s_2}$ the event $(E_{s_1, s_2}, a_{s_2} = -1)$. Then
\begin{align*}
    \mathbb{P}_{+}(\Gamma_{s_1, s_2}) &= \mathbb{P}_{+}(a_{s_2} = -1 | E_{s_1, s_2}) \cdot \mathbb{P}_{+}(E_{s_1, s_2})\\
    &\leq \mathbb{P}_{+}(a_{s_2} = -1 | E_{s_1, s_2}).
\end{align*}
By Lemma~\ref{lem:quickly_same_as_l_star}, there exists a $z > 0$ such that $E_{s_1, s_2}$ implies that $\ell_{s_2} \geq \ell_{s_2 - s_1 - z}^*$. Therefore,
\begin{align*}
    \mathbb{P}_{+}(\Gamma_{s_1, s_2}) &\leq G_+(-\ell_{s_2 - s_1 - z}^*).
\end{align*}
Since for $t$ sufficiently large $\ell_t^* >  t^{\frac{1}{k+2}}$ and since $G_+(-x) \leq \ee^{-x}$ by \eqref{eq:nu},
\begin{align*}
    \mathbb{P}_{+}(\Gamma_{s_1, s_2}) \leq \ee^{-\alpha (s_2 - s_1 - z)^{\frac{1}{k+2}}} \leq \ee^{-\alpha (t/(\beta\log{t}) - z)^{\frac{1}{k+2}}}.
\end{align*}
To simplify, we further bound this last expression to arrive at, for some $c > 0$,
\begin{align*}
    \mathbb{P}_{+}(\Gamma_{s_1, s_2}) \leq c\ee^{-t^{\frac{1}{k+3}}}
\end{align*}
for all $t$.
Since $B_1$ is covered by fewer than $t^2$ events of the form $\Gamma_{s_1, s_2}$ (as $s_1$ and $s_2$ are less than $t$), it follows that
\begin{align*}
    \mathbb{P}_{+}(B_1) < ct^{2}\ee^{-t^{\frac{1}{k+3}}} < \frac{1}{t^{2.1}}
\end{align*}
for all $t$ large enough.

Finally we bound $\mathbb{P}_+(B_2)$. This is the event that the number of upsets so far is small and the majority of agents so far have taken the wrong action. As in $B_1$, there is a maximal bad run of length at least $t/(\beta\log(t))$.

Denote by $R$ the event that there is at least one bad run of length $t/(\beta\log(t))$ before time $t$ and by $R_{s}$ the event that agents $s$ through $s+t/(\beta\log{t})-1$ take action $-1$. Since $B_2$ is contained in $R$, and since $R$ is contained in the union $\cup_{s=1}^t R_s$, we have that
\begin{align*} 
    \mathbb{P}_+(B_2)
    \leq \mathbb{P}_+(R)
    \leq \sum_{s=1}^t\mathbb{P}_+(R_{s}).
\end{align*}
Taking the maximum of all the addends in the right hand side, we can further bound the probability of $B_2$:
\begin{align*}
    \mathbb{P}_+(B_2)\leq t \cdot \max_{1 \leq s \leq t}{\mathbb{P}_+(R_s)}.
\end{align*}
Conditioned on $\ell_s$, the probability of $R_s$ is
\begin{align*}
    \mathbb{P}_+(R_s\,|\,\ell_s) = \prod_{r=s}^{s+t/(\beta\log t)-1}G_+(-\ell_r).
\end{align*}
By Lemma~\ref{lem:ratio to ell star bounded}, there exists $M > 0$ such that $|\ell_{r}| \leq M\ell_t^*$, for all  $r\leq t$. Therefore, since  $G_+$ is monotone, 
\begin{align*}
    \mathbb{P}_+(R_s) \leq G_+(M\ell_t^{*})^{t/(\beta\log t)}.
\end{align*}
It follows that
\begin{align*}
    \mathbb{P}_+(B_2) \leq t \cdot G_+(M\ell_t^{*})^{t/(\beta\log t)}.
\end{align*}
Since $G_+(x) = 1-c\cdot x^{-k}$ for $x$ large enough, and since $\ell_t^*$ is asymptotically at most $t^{1/(k+0.5)}$, we have that 
\begin{align*}
    \log{G_+(M\ell_t^{*})} \leq -c M^{-k}\cdot t^{-k / (k+0.5)}.
\end{align*}
Thus
\begin{align*}
    \mathbb{P}_+(B_2) \leq t \cdot \exp\left(-c M^{-k} \cdot t^{1/(2k+1)} / (\beta\log t)\right) \leq t^{-2.1},
\end{align*}
for all $t$ large enough.
This concludes the proof, because $\mathbb{P}_{+}(a_t = -1) = \mathbb{P}_{+}(A)+\mathbb{P}_{+}(B_1)+\mathbb{P}_{+}(B_2)\leq \kappa\frac 1 {t^{2.1}}$ for some constant $\kappa$.

\end{proof}

Given this bound on the probability of mistakes, the proof of the main theorem of this section follows easily from Lemma~\ref{lem:mistake-bounds-time-to-learn}.
\begin{proof}[Proof of Theorem~\ref{thm:polytail-finite-expectation}]

By Proposition~\ref{proposition:polynom_probability}, there exists $\kappa > 0$ such that $\mathbb{P}(a_t = -1\,|\,\theta = +1) < \kappa \frac{1}{t^{2.1}}$ for all $t \geq 1$. Hence, by Lemma~\ref{lem:mistake-bounds-time-to-learn} $\mathbb{E}(T_L\,|\,\theta = +1)<\infty$. By a symmetric argument the same holds conditioned on $\theta=-1$. Thus, the expected time to learn is finite. 
\end{proof}

\end{document}